\providecommand{\U}[1]{\protect\rule{.1in}{.1in}}
\numberwithin{equation}{section}
\newtheorem{theorem}{Theorem}[section]
\newtheorem{lemma}[theorem]{Lemma}
\newtheorem{proposition}[theorem]{Proposition}
\newtheorem{remark}[theorem]{Remark}
\newenvironment{proof}[1][Proof]{\textbf{#1.} }{\ \rule{0.5em}{0.5em}}
\title{Multiple peak aggregations for the Keller-Segel system}
\author{Yukihiro Seki
\thanks{Institute for Applied Mathematics. University of Bonn,
Endenicher Allee 60, D-53115, Germany.}
\footnote{Corresponding to: Yukihiro Seki, E-mail address:  \texttt{seki@iam.uni-bonn.de}}
\and
Yoshie Sugiyama
\thanks{Department of Mathematics. Osaka City University,
Sugimoto 3-3-138, Sumiyoshi-ku, Osaka, 558-8585, Japan.}
\and
Juan J. L. Vel\'{a}zquez $^*$}
\date{}
\begin{document}
\maketitle

\begin{abstract}
In this paper we derive matched asymptotic expansions for a solution of the
Keller-Segel system in two space dimensions for which the amount of mass
aggregation is $8\pi N$, where $N=1,2,3,...$ 
Previously available asymptotics had been computed only for the case in which $N=1$.

\end{abstract}

\noindent{\footnotesize { \textbf{Key words}: Keller-Segel; blow-up;
{\footnotesize chemotaxis; aggregation; nonradially symmetric solutions.}}
}\newline{\footnotesize { \textbf{2010 Mathematics subject classification}:
35B25; 35K40; 92B05.}}

\section{Introduction}

The goal of this paper is to describe, using matched asymptotics, the
asymptotic behavior near blow-up points of a class of nonradially symmetric
solutions of the following Keller-Segel system.
\begin{subequations}
\begin{align}
u_{t}  &  =\Delta u-\nabla\left(  u\nabla v\right)  , \;\;\;\;x\in
\mathbb{R}^{2}, \ t>0,\label{S1E1}\\
0  &  =\Delta v + u,\;\;\;\;x\in\mathbb{R}^{2},\ t>0,\label{S1E2}
\end{align}
\end{subequations}
The Keller-Segel system, which was introduced in \cite{KS}, is a classical
model of chemotactic aggregation. In this model $u$ is the density of a
biological organism and $v$ is the concentration of a chemical substance
produced by it having chemoattractant properties. It was conjectured in
\cite{Childress} and rigorously proven in \cite{JL}, in the case of bounded
domains, that solutions of \eqref{S1E1}-\eqref{S1E2} may blow-up in finite time,
showing the fact that is usually interpreted as the formation of a high
density aggregate of cells.

The mathematical properties of \eqref{S1E1}-\eqref{S1E2}
have been extensively studied by many authors. One of the most peculiar
features of \eqref{S1E1}-\eqref{S1E2} is the existence of a critical mass
$m_{0}$ such that for solutions with initial total mass of organism $\int
u_{0}$ larger than $m_{0},$ blow-up takes place, whereas solutions with smaller
values of $\int u_{0}$ yield global existence of solutions (cf. \cite{Biler,Nagai1},
for bounded domains, \cite{DP,Nagai2} in the case of
$\mathbb{R}^{2}$). It has been already proven that blow-up consists in the
formation of a Dirac mass in finite time with an amount of mass larger than
$4\pi$ in the case of Neumann boundary conditions and blow-up taking place at
the boundary of the domain, and larger than $8\pi$ in the case of blow-up
taking place at interior points (cf.~\cite{SS}).
The literature about the Keller-Segel system is huge and we will not attempt to summarize hear 
all the existent research concerning 
singularity formation and global existence for \eqref{S1E1}, \eqref{S1E2}.
Some of the main results in this direction can be found in \cite{Biler,BCM,DP,JL,Nagai1,Nagai2}.

In the case of radially symmetric solutions, the asymptotic behavior of
solutions of \eqref{S1E1}-\eqref{S1E2} near blow-up points was obtained in
\cite{HV1} using asymptotic methods, and a rigorous construction of such
solutions was given in \cite{HV2}. Actually the paper \cite{HV1} describes
formally the asymptotics of the blow-up solutions also in the
parabolic-parabolic case in which (\ref{S1E2}) is replaced by a parabolic
equation. The rigorous construction of the corresponding solutions is given in
\cite{HV3}. The solutions constructed in \cite{HV2} produce the aggregation of
a Dirac mass with the mass $8\pi$. On the other hand, continuation of
solutions after blow-up has been considered using formal arguments in \cite{V2,V3}, 
and rigorous mathematical analysis in \cite{DS,LSV}.

We will describe in this paper the asymptotics of solutions of (\ref{S1E1}),
(\ref{S1E2}) yielding formation of Dirac masses whose amount of mass is $8\pi N$ with
$N=2,3,4,...$ These solutions will be obtained by means of the coalescence at
time $t=T$ of $N$ peaks of mass placed at distances of order $\sqrt{T-t},$
each of the peaks containing an amount of mass asymptotically close to $8\pi.$
The behavior of such solutions will be obtained using matched asymptotics. The
peaks where most of the mass is concentrated near the blow-up time are placed
at the vertices of some polygons to be described in detail later.

We summarize the main result of this paper in the following Theorem. We
emphasize that the results of this paper are obtained at the level of formal
asymptotic expansions but not a rigorous Theorem in the sense of Mathematical Analysis.

\begin{theorem}
\label{Mainresult} It is possible to find formal asymptotic expansions for
solutions of the Keller-Segel system (\ref{S1E1}), (\ref{S1E2}) that blow up
at the time $t=T$ at the point $x=x_{0}$ and at each time $t<T$ the mass is
concentrated around the points $x_{j}\left(  t\right)  ,\;\;j=1,2$ where:
\[
x_{j}\left(  t\right)  =x_{0}+\left(  -1\right)  ^{j+1}\mathbf{a}\sqrt
{T-t},\quad\;\;\;j=1,2
\]
with $\mathbf{a}=(2,0)\in\mathbb{R}^{2}$. More precisely, the formal solutions
described by the asymptotics found in this paper have the following property.
For any $\nu>0$ arbitrarily small, there exists $R>0$ sufficiently large such
that:
\[
\lim_{t\rightarrow T^{-}}\left\vert \int_{B_{R\delta\left(  t\right)  }\left(
x_{j}\left(  t\right)  \right)  }u\left(  x,t\right)  dx-8\pi\right\vert
\leq\nu
\]
with
\[
\delta\left(  t\right)  =\sqrt{T-t}e^{-\alpha\left\vert \log\left(
T-t\right)  \right\vert }\ \
\]
for some $\alpha>0.$

Moreover, the total amount of mass concentrating at the point $x=x_{0}$ as
$t\rightarrow T^{-}$ is $16\pi.$ More precisely, for any function $\eta\left(
t\right)  $ such that $\lim_{t\rightarrow T^{-}} \eta\left(  t\right)/\sqrt{T-t}
=\infty$ and $\lim_{t\rightarrow T^{-}}\eta\left(  t\right)  =0$
one has:
\[
\lim_{t\rightarrow T^{-}}\int_{B_{\eta\left(  t\right)  }\left(  x_{0}\right)
}u\left(  x,t\right)  dx=16\pi.
\]
\end{theorem}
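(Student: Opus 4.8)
The plan is to construct the solution by matched asymptotic expansions, gluing together three types of regions: an inner region around each of the two peaks $x_j(t)$, an intermediate region around $x_0$ that captures the interaction between the peaks, and an outer region where $u$ remains bounded. In each inner region I would introduce self-similar variables adapted to the scale $\delta(t)$; by analogy with the single-peak case treated in \cite{HV1,HV2}, the leading-order profile near each $x_j(t)$ should be a rescaled, slightly perturbed stationary solution carrying mass asymptotically $8\pi$, namely the Chandrasekhar-type profile $U_\lambda(y) = 8\lambda^2/(\lambda^2+|y|^2)^2$, with a time-dependent scaling parameter $\lambda(t)$ related to $\delta(t)$. The logarithmic correction $e^{-\alpha|\log(T-t)|}$ in $\delta(t)$ is forced exactly as in the $N=1$ analysis: the excess mass beyond $8\pi$ decays like an inverse power of $|\log(T-t)|$, and matching the inner profile to the next region fixes $\alpha$.

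Next I would set up the intermediate region at scale $\sqrt{T-t}$, where the two peaks appear as two point masses of strength $\approx 8\pi$ sitting at $\pm\mathbf{a}$ in the rescaled variable $\xi = (x-x_0)/\sqrt{T-t}$. Here one studies the effective dynamics of the peak locations: each peak is transported by the gradient of $v$ generated by the other peak plus a self-similar drift term coming from the change to self-similar variables. Writing the peak positions as $\pm\mathbf{a}(s)$ with $s = -\log(T-t)$, one derives an ODE for $\mathbf{a}(s)$ whose right-hand side balances the logarithmic-in-$s$ self-interaction (the $8\pi$-threshold correction) against the $-8\pi\log|\,\cdot\,|$ pairwise Newtonian interaction and the confining drift $\tfrac12\mathbf{a}$; one checks that $\mathbf{a}(s)\to(2,0)$ is the relevant stationary configuration, which is why $\mathbf{a}=(2,0)$ appears in the statement. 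The symmetry of the two-peak configuration under reflection is used to reduce the system to a scalar equation and to rule out collapse of the peaks onto each other before $t=T$.

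For the mass statements I would argue as follows. For the per-peak mass, integrate the continuity equation $u_t = \nabla\cdot(\nabla u - u\nabla v)$ over the moving ball $B_{R\delta(t)}(x_j(t))$, convert to the inner variable, and show that the flux through the boundary $\partial B_{R\delta(t)}$ is $o(1)$ as $t\to T^-$ once $R$ is large, using that the inner profile's tail is integrable and that $\int_{|y|>R} U_\lambda \to 0$ as $R\to\infty$; this yields the $8\pi + o(1)$ estimate with the stated $\nu$-$R$ quantifier order. For the total $16\pi$ at $x_0$, take any $\eta(t)$ with $\eta/\sqrt{T-t}\to\infty$ and $\eta\to0$: then $B_{\eta(t)}(x_0)$ eventually contains both balls $B_{R\delta(t)}(x_j(t))$ (since $|x_j(t)-x_0| = 2\sqrt{T-t} \ll \eta(t)$ and $\delta(t)\ll\sqrt{T-t}$), so the integral is at least $16\pi - 2\nu$; conversely, the mass in the intermediate annulus $\{R\delta(t)<|x-x_j(t)|\}\cap B_{\eta(t)}(x_0)$ is controlled by the intermediate-region expansion and is $o(1)$ because in self-similar variables the non-peak part of $u$ integrates to a quantity tending to $0$ on the relevant shrinking set, giving the matching upper bound.

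The main obstacle I anticipate is the self-consistent closure of the matching between the inner and intermediate regions: one must simultaneously determine $\lambda(t)$ (hence $\delta(t)$ and $\alpha$) from the inner excess-mass balance and $\mathbf{a}(s)$ from the intermediate peak-dynamics, and these are coupled because the drift felt by each peak depends on the precise sub-leading behavior of the other peak's tail, which in turn depends on $\lambda$. Getting the logarithmic corrections to match to the needed order — so that the overlap region exists and the expansions agree there — is the delicate computation, exactly the point where the $N\ge2$ case genuinely differs from the radial $N=1$ case of \cite{HV1}. The remaining estimates (boundary fluxes, annular mass bounds) are then comparatively routine once the asymptotic ansatz is in place.
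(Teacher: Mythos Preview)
Your overall architecture (inner layers near each peak built from rescalings of $u_s(\xi)=8/(1+|\xi|^2)^2$, an outer expansion in the self-similar variable $y=(x-x_0)/\sqrt{T-t}$, matching to fix the peak width) is the same as the paper's. But two of your key mechanisms are misidentified, and one of them hides the genuinely new difficulty of the $N=2$ case.

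First, the position $\mathbf{a}=(2,0)$ is not obtained from an ODE in $s=-\log(T-t)$ with a ``logarithmic-in-$s$ self-interaction'' term. It is a purely algebraic condition: in self-similar variables the limit profile is the singular steady state $\Phi_s=8\pi\bigl[\delta(y-\mathbf{a})+\delta(y+\mathbf{a})\bigr]$, and the peak locations must satisfy
\[
\frac{y_j}{2}=4\sum_{\ell\ne j}\frac{y_j-y_\ell}{|y_j-y_\ell|^2},
\]
i.e.\ the self-similar confining drift balances the chemotactic attraction from the \emph{other} peak only. For two peaks this gives $|\mathbf{a}|=2$ immediately. There is no self-interaction term in this balance; the width $\varepsilon_\ell$ and the position $\mathbf{a}$ decouple at leading order, contrary to what you anticipate in your last paragraph. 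The slow drift $\bar y_{\ell,\tau}$ of the actual maxima is a higher-order correction of size $\varepsilon_\ell^2\log\varepsilon_\ell$, determined only after the main matching is done.

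Second, and more seriously, the determination of $\alpha$ is \emph{not} ``exactly as in the $N=1$ analysis''. In the outer region the leading density is $\Phi\sim\varepsilon_\ell^2\,\Omega(y)$ where $\Omega$ solves a linear elliptic equation
\[
\Delta\Omega-\tfrac{1}{2}y\cdot\nabla\Omega+\Bigl(\tfrac{4(y-\mathbf{a})}{|y-\mathbf{a}|^2}+\tfrac{4(y+\mathbf{a})}{|y+\mathbf{a}|^2}\Bigr)\cdot\nabla\Omega-\Omega=0
\]
with prescribed singularities $\Omega\sim 8|y\mp\mathbf{a}|^{-4}$ at both points. This is a genuinely non-radial problem with no closed-form solution; the paper proves existence/uniqueness by barrier arguments and extracts from the local expansion of $\Omega$ near $\pm\mathbf{a}$ a constant $A$. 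Matching the radial order-$\varepsilon_\ell^2$ terms then yields the ODE for $\varepsilon_\ell$, and one finds $\alpha=\sqrt{3/16-16A}$, which is real only if $A<0$. This sign condition is the crux of the two-peak construction and was verified numerically; your proposal does not mention it and your phrase ``exactly as in the $N=1$ analysis'' would lead you to the wrong value of $\alpha$ and to miss the feasibility obstruction that the paper flags for general $N$.
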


\begin{remark}
\label{constantA}The argument used in the construction suggests that it would
be possible to obtain solutions yielding the aggregation of an arbitrary
number of multiples of $8\pi.$ However, the feasibility of such a construction
requires to check that a certain elliptic problem, associated to suitable
singular self-similar solutions of \eqref{S1E1}, \eqref{S1E2} (cf.~Section \ref{Asympt}), 
satisfy some sign condition that will be discussed in detail in
Section \ref{outer} for the case in which two peaks aggregate. We have checked
that this sign condition holds in this particular case solving numerically an
elliptic equation. Analogous sign conditions should be checked for
aggregations of multiple peaks, which we have not attempted in this paper.
Precise asymptotic formulas for the solutions described in Theorem
\ref{Mainresult} will be given in the rest of the paper. 
In particular, we will derive precise formulas for the width of the regions around the points
$x_{i}\left(  t\right)  $ where the mass concentrates.
The final profile of the solution at
the blow-up time will be described in Remark \ref{Final}.
\end{remark}

The results of this paper are of a local nature. For this reason we just
restrict our analysis to the case in which the system is solved in the whole $\mathbb{R}^{2}.$ 
Similar results could be derived for the Cauchy-Neumann
problem in bounded domains with non-flux boundary conditions (cf.~Section \ref{CNprob}).

We finally remark that numerical simulations showing aggregation of several peaks at the time
of the singularity formation were obtained in \cite{S}.

\section{\label{Asympt}Notation and preliminaries.}

As indicated in the Introduction we will denote as $T$ the blow-up time. We
will use repeatedly in the rest of the paper the following self-similar
variables:
\begin{subequations}
\begin{align}
u\left(  x,t\right)   &  =\frac{1}{T-t}\Phi\left(  y,\tau\right)
,\quad\ v\left(  x,t\right)  =W\left(  y,\tau\right)  ,\label{S1E3}\\
y  &  =\frac{x-x_{0}}{\sqrt{T-t}},\quad\;\tau=-\log\left(  T-t\right)  .
\label{S1E4}%
\end{align}
The system (\ref{S1E1}), (\ref{S1E2}) becomes in these variables:
\end{subequations}
\begin{subequations}
\begin{align}
\Phi_{\tau}  &  =\Delta\Phi-\frac{y\nabla\Phi}{2}-\nabla\left(  \Phi\nabla
W\right)  -\Phi,\label{S1E5}\\
0  &  =\Delta W+\Phi. 
\label{S1E6}
\end{align}
It is natural to expect a self-similar behavior for the solutions of (\ref{S1E5}), (\ref{S1E6}). 
Self-similar solutions of (\ref{S1E1}), (\ref{S1E2}) solve:
\end{subequations}
\begin{subequations}
\begin{align}
\Delta\Phi-\frac{y\nabla\Phi}{2}-\nabla\left(  \Phi\nabla W\right)  -\Phi &
=0,\label{S1E7}\\
\Delta W+\Phi &  =0
\label{S1E8}
\end{align}
in the variable \eqref{S1E3}, \eqref{S1E4}.
The solutions that we construct in this paper
approach asymptotically as $\tau\rightarrow\infty$ the singular steady states:%
\end{subequations}
\begin{equation}
\Phi_{s}=8\pi\sum_{\ell=1}^{N}\delta\left(  y-y_{\ell}\right)  
\label{U1E1}
\end{equation}
with the points $y_{\ell}$ satisfying:
\begin{equation}
\frac{y_{j}}{2} - 4\sum_{\ell=1,\;\ell\neq j}^{N}
\frac{y_{j}-y_{\ell}}{\left\vert y_{j}-y_{\ell}\right\vert^{2}} =0,\quad\ j=1,2,...,N.
\label{U1E2}
\end{equation}

The solutions \eqref{U1E1}, \eqref{U1E2} solve \eqref{S1E7}, \eqref{S1E8} in
the sense that they can be obtained as a limit of bounded solutions $\left(
\Phi_{n},W_{n}\right)  $ of \eqref{S1E7}, \eqref{S1E8} in bounded domains
$B_{R_{n}}$ with $R_{n}\rightarrow\infty$ as $n\rightarrow\infty.$ The reason
for requiring the solutions to be obtained in such a way, is because we want
these solutions to appear as a limit of bounded solutions of (\ref{S1E5}),
(\ref{S1E6}) as $\tau\rightarrow\infty.$ 
Seemingly this implies that the mass at each aggregation point must be $8\pi.$ 
We would not attempt to give a precise meaning to these solutions in this paper, 
although it is likely that they could be given a precise meaning using some of the methods used in
\cite{DS,LSV,SSbook} to define solutions of the
two-dimensional Keller-Segel system for measures containing Dirac masses.
Another alternative seems to be to use ideas analogous to the ones obtained in
\cite{GrTa}.

The solutions obtained in this paper will behave asymptotically as in
(\ref{U1E1}), (\ref{U1E2}) as $\tau\rightarrow\infty.$ A particular case of
these solutions corresponds to the case of radially symmetric solutions
considered in \cite{HV1,HV2}. An alternative way of deriving the
asymptotics of these solutions can be found in \cite{V1}. In this radially
symmetric case, the corresponding solution of \eqref{S1E7}, \eqref{S1E8} has
the form:
\begin{equation}
\Phi_{r,s}\left(  y\right)  =8\pi\delta\left(  y\right)  . \label{S4E1}%
\end{equation}

As was seen in \cite{HV1,V1}, the solutions of
\eqref{S1E1}-\eqref{S1E2} with the asymptotics near the blow-up characterized
by (\ref{S4E1}) have the mass concentrated in a region of size:%
\begin{equation}
\varepsilon\left(  \tau\right)  =Ke^{-\sqrt{\frac{\tau}{2}}}, 
\label{S4E3}%
\end{equation}
where $K=2e^{-\frac{2+\gamma}{2}}$ with classical Euler's constant $\gamma$.
The region where the mass aggregates can be described by means of a rescaling
with a factor $\varepsilon\left(  \tau\right)  $ of the following stationary
solution found in \cite{Childress}:
\begin{equation}
u_{s}\left(  x\right)  =\frac{8}{\left(  1+\left\vert x\right\vert
^{2}\right)  ^{2}},\quad\ v_{s}\left(  x\right)  =-2\log\left(  1+\left\vert
x\right\vert ^{2}\right)  . \label{S4E2}%
\end{equation}

In this paper we will give most of the details concerning the asymptotics of
solutions of \eqref{S1E1}-\eqref{S1E2} which are bounded for $t<T$ and blows
up at $t=T$ in the particular case of a limit function $\Phi_{s}$, a solution
of \eqref{S1E7}, \eqref{S1E8} with the form (\ref{U1E1}) concentrated in two
peaks, (i.e. $N=2)$. The reason is twofold. First, the computations become
more cumbersome for an increasing number of peaks, but without requiring
essentially different ideas. On the other hand, the construction requires to
check a sign condition for a suitable elliptic problem, as indicated in Remark
\ref{constantA}, and this is what we have made numerically only in the case of
two peaks. In any case, solutions of \eqref{S1E7}, \eqref{S1E8} with the form
\eqref{U1E1} will be discussed in Section \ref{selfSimSing}.

Due to the symmetry of the problem under rotations we can restrict ourselves
to the case in which $\Phi_{s}$ is given by:%
\begin{equation}
\Phi_{s}\left(  y\right)  =8\pi\left[  \delta\left(  y-y_{1}\right)
+\delta\left(  y-y_{2}\right)  \right]  ,\quad\ y_{1}=\mathbf{a},\quad
y_{2}=-\mathbf{a},\quad\mathbf{a}=\left(  2,0\right)  . 
\label{U1E3a}%
\end{equation}
The detailed structure near the points $y_{\ell},\ \ell=1,2$, can be computed by
introducing boundary layers having many similarities to the ones described
in \cite{HV1,V1}. The rescaling factor $\varepsilon\left(
\tau\right)  $ will have a form similar to the one given in (\ref{S4E3}),
although the value of the constant $K$ will differ in general from the one
obtained for the radially symmetric case. Actually, in the case of the
asymptotics given by (\ref{U1E1}), the value of this constant could be
different for each of the aggregation points. This will not be the case if
$\Phi\left(  y,\tau\right)  $ approaches the singular stationary solution
$\Phi_{s}$ in (\ref{U1E3a}) due to symmetry considerations.

A large portion of this paper consists in the detailed description of the
boundary layers describing the regions of mass aggregation near the points
$y_{1}, y_{2}.$ The computation of these layers will be made using the
methods developed in \cite{V1} because the validity of some of the arguments in
\cite{HV1} is restricted to the radially symmetric case.

We now describe shortly our strategy to compute the asymptotics of the
solutions near the blow-up points. We will obtain outer\ and inner expansions
for the solutions. The outer expansion is valid in the region where
$\left\vert y\right\vert \approx1$ and $\left\vert y-y_{\ell}\right\vert \gg
e^{-\alpha\sqrt{\tau}}$ as $\tau\rightarrow\infty,$ $\ell=1,2,\ $\ for some
$\alpha>0$ to be revealed later. 
The inner expansion is valid in the regions where 
$\left\vert y-y_{\ell}\right\vert \approx e^{-\alpha\sqrt{\tau}},\ \ell=1,2.$ 
Both expansions are obtained under the assumption that the mass
aggregating near the points $y_{\ell}$ concentrates in a region with width
$\varepsilon_{\ell}\left(  \tau\right)  \ll 1,$ 
whose precise value will be computed later. 
Such assumption will be shown to be self-consistent with the derived asymptotics. 
There is a common region of validity where both outer and inner expansions make sense. 
The matching condition between both types of expansion in that intermediate region 
provides a set of differential equations for the functions $\varepsilon_{\ell}\left(
\tau\right)$ and these equations yield the asymptotics of such functions.

We make extensive use of the asymptotic notation. We write $f\ll g$ as
$x\rightarrow x_{0}$ to indicate $\lim_{x\rightarrow x_{0}} f/g =0,$
whereas $f\sim g$ as $x\rightarrow x_{0}$ to denote $\lim_{x\rightarrow x_{0}}f/g =1.$ 
The notation $f\approx g$ as $x\rightarrow x_{0}$ indicates
that the terms $f$ and $g$ have a comparable order of magnitude, that is, the
existence of $C>0$ such that $1/C \leq\lim\inf_{x\rightarrow x_{0}}f/g 
\leq\lim\sup_{x\rightarrow x_{0}} f/g \leq C.$

\section{\label{Innerexp}Inner expansions.}

\subsection{Expansion of the solutions.}

We compute the asymptotics of the functions $\Phi,\;W$ defined in
\eqref{S1E3}, \eqref{S1E4}. In the case of radially symmetric solutions it is
assumed that $\nabla\Phi\left(  y_{\ell},\tau\right)  =0$ with $y_{\ell}=0.$
However, due to the lack of symmetry, points where the maximum of $\Phi$ are
attained could change in time. We assume the existence of functions 
$\left\{ \bar{y}_{\ell}\left(  \tau\right)  :\ell=1,2,...,N \right\}$ such that:
\begin{align}
\nabla \Phi \left( \bar{y}_{\ell}\left(  \tau \right)  , \tau\right) & =0,
\label{S4E4}\\
\lim_{\tau\rightarrow\infty}\bar{y}_{\ell}\left(  \tau\right)  & = y_{\ell}.
\label{S4E5}
\end{align}
It will be checked later that all these assumptions are self-consistent as
usual in matched asymptotics. Let us introduce the following set of variables
to describe the inner solutions near each point $y_{\ell}:$%
\begin{subequations}
\begin{align}
\xi &  = \frac{y-\bar{y}_{\ell}\left(  \tau\right)}{\varepsilon_{\ell}\left(\tau\right)},
\label{S4E6}\\
\Phi\left(  y,\tau\right)  
&  =\frac{1}{\left( \varepsilon_{\ell}\left( \tau\right) \right)^{2}}U\left(  \xi,\tau\right). 
\label{S4E7}
\end{align}
\end{subequations}

On the other hand, we will write, with a little abuse of notation, 
$W\left( y, \tau \right)  = W\left(  \xi,\tau\right)$. 
Notice that the variables $\xi,\;U\left(  \xi,\tau\right)$, and $W\left(  \xi,\tau\right)$ 
depend on $\ell$, but these dependencies will not be explicitly written unless needed.
Using \eqref{S1E5}, \eqref{S1E6}, \eqref{S4E6}, and \eqref{S4E7} we obtain:
\begin{subequations}
\begin{align}
\varepsilon_{\ell}^{2} \frac{\partial U}{\partial \tau}  
& = \Delta_{\xi} U - \nabla_{\xi} \left(  U \nabla_{\xi} W \right)  
    + \left(  2\varepsilon_{\ell} \varepsilon_{\ell,\tau}
    - \varepsilon_{\ell}^{2}\right)  \left( U + \frac{\xi\nabla_{\xi} U}{2} \right)  
    + \left(  \varepsilon_{\ell} \bar{y}_{\ell,\tau}
    - \frac{\varepsilon_{\ell} \bar{y}_{\ell}}{2} \right)  \nabla_{\xi} U,
    \label{S4E8}\\
0  &  =\Delta_{\xi}W+U. \label{S4E9}%
\end{align}
\end{subequations}

We will now assume that the function $\varepsilon_{\ell}\left(  \tau\right)$ satisfies:
\begin{align}
\varepsilon_{\ell}\left(  \tau\right)  & \ll 1
\quad \text{ as }\tau \rightarrow\infty, 
\label{S5E1}\\
\left\vert \varepsilon_{\ell,\tau\tau}\right\vert 
+\left\vert \varepsilon_{\ell,\tau}\right\vert & \ll\varepsilon_{\ell}
\quad \text{ as }\tau \rightarrow\infty. 
\label{S5E3}
\end{align}
Assumptions similar to (\ref{S5E1}), (\ref{S5E3}) are made in \cite{V1}. In
addition, we will also assume in this paper:%
\begin{equation}
\left\vert \bar{y}_{\ell,\tau}\right\vert \ll 1
\quad \text{ as }\tau\rightarrow\infty. 
\label{S5E4}
\end{equation}

We now define in a precise manner the functions $\varepsilon_{\ell}\left(
\tau\right)$. We expect $U,\, W$ to behave like the stationary solution
(\ref{S4E2}). The steady states of (\ref{S1E1}), (\ref{S1E2}) can be defined
up to rescaling. Therefore the functions $\varepsilon_{\ell}\left(
\tau\right)  $ could be computed up to a rescaling factor. The assumption
$U\left(  \xi,\tau\right)  \rightarrow u_{s} ( \xi)$
as $\tau\rightarrow\infty$ would prescribe uniquely the leading order
asymptotics of $\varepsilon_{\ell}\left(  \tau\right)  .$ Moreover, we can
prescribe uniquely the function $U$, imposing the normalization:%
\begin{equation}
U\left(  0,\tau\right)  =8 \label{S5E5}%
\end{equation}
or, in an equivalent manner:%
\begin{equation}
\Phi\left(  \bar{y}_{\ell}\left(  \tau\right)  ,\tau\right)  =\frac{8}{\left(
\varepsilon_{\ell}\left(  \tau\right)  \right)  ^{2}}. \label{S5E6}%
\end{equation}

We then look for solutions of the system (\ref{S4E8}), (\ref{S4E9}) with the
form of the following expansions:
\begin{align}
U\left(  \xi,\tau\right)   &  =u_{s}\left(  \xi\right)  +U_{1}\left(  \xi
,\tau\right)  +U_{2}\left(  \xi,\tau\right)  +U_{3}\left(  \xi,\tau\right)
+U_{4}\left(  \xi,\tau\right)  +...,\label{S5E7}\\
W\left(  \xi,\tau\right)   &  =v_{s}\left(  \xi\right)  +W_{1}\left(  \xi
,\tau\right)  +W_{2}\left(  \xi,\tau\right)  +W_{3}\left(  \xi,\tau\right)
+W_{4}\left(  \xi,\tau\right)  +..., \label{S5E8}%
\end{align}
where $(u_{s},\,v_{s})$ are the stationary solution as in (\ref{S4E2}). Notice
that the function $v_{s}$ is prescribed up to the addition of an arbitrary
constant, but this can be ignored due to the form of the system
\eqref{S1E1}-\eqref{S1E2}. On the other hand, it will be assumed, as in
\cite{V1}, the terms $U_{1},W_{1}$ contain terms whose order of magnitude is
$\varepsilon_{\ell}$ and that the terms $U_{2},W_{2}$ contain terms whose
order of magnitude is $\left(  \varepsilon_{\ell}\right)  ^{2}$ or
$\varepsilon_{\ell}\bar{y}_{\ell,\tau}$ up to logarithmic corrections like
$\left\vert \log \varepsilon_{\ell} \right\vert ^{\beta}%
,\tau^{\beta}$ or similar ones. Such logarithmic corrections will arise from
terms like $\varepsilon_{\ell,\tau}/\varepsilon_{\ell}$ or similar ones. The
notation introduced in \cite{V1} and used also in this paper consists in
writing all these terms as $\varepsilon_{\ell}^{2}$ $(w.l.a)$ (with
logarithmic accuracy).

We will include in $U_{1},W_{1}$ also the terms whose order of magnitude is
$\varepsilon_{\ell}\ \left(  w.l.a\right)  $. Therefore:
\begin{equation}
\left(  U_{1},W_{1}\right)  \approx\varepsilon_{\ell}\ \left(  w.l.a\right)
\text{ \ as\ }\tau\rightarrow\infty. \label{S5E10}%
\end{equation}
On the other hand we will include in $U_{2},W_{2}$ also the terms whose order
of magnitude is $\varepsilon_{\ell}\bar{y}_{\ell,\tau}$ $\left(  w.l.a\right)
.$ Therefore:
\begin{equation}
\left(  U_{2},W_{2}\right)  \approx\varepsilon_{\ell}^{2}+\varepsilon_{\ell
}\bar{y}_{\ell,\tau}\;\left(  w.l.a\right)  \text{ \ as\ }\tau\rightarrow
\infty\label{S5E10a}%
\end{equation}
In a similar manner, including in $\left(  U_{3},W_{3}\right)  $ terms of
order $\varepsilon_{\ell}^{3},\ \varepsilon_{\ell}^{2}\bar{y}_{\ell,\tau}$ and
$\varepsilon_{\ell}\bar{y}_{\ell,\tau}^{2}$ $\left(  w.l.a\right)  $ and
including in $\left(  U_{4},W_{4}\right)  $ terms of order $\varepsilon_{\ell
}^{4},\ \varepsilon_{\ell}^{3}\bar{y}_{\ell,\tau},$\ $\varepsilon_{\ell}%
^{2}\bar{y}_{\ell,\tau}^{2}$ $\left(  w.l.a\right)  $ we obtain:%
\begin{gather}
\left(  U_{3},W_{3}\right)  \approx\varepsilon_{\ell}^{3}+\varepsilon_{\ell
}^{2}\bar{y}_{\ell,\tau}+\varepsilon_{\ell}\bar{y}_{\ell,\tau}^{2}\;\left(
w.l.a\right) \quad \text{ as }\tau\rightarrow\infty, 
\label{S5E11-bis}\\
\left(  U_{4},W_{4}\right)  \approx\varepsilon_{\ell}^{4}+\varepsilon_{\ell
}^{3}\bar{y}_{\ell,\tau}+\varepsilon_{\ell}^{2}\bar{y}_{\ell,\tau}%
^{2}\;\left(  w.l.a\right)  \quad \text{ as }\tau\rightarrow\infty. 
\label{S5E11}
\end{gather}

Making the assumptions (\ref{S5E10})-(\ref{S5E11}) it follows that the
functions $(U_{1},W_{1})$, $(U_{2},W_{2}),(U_{3},W_{3})$, and $(U_{4},W_{4})$
satisfy respectively the following systems:
\begin{subequations}
\begin{align}
0  &  =\Delta_{\xi}U_{1}-\nabla_{\xi}\left(  u_{s}\nabla_{\xi}W_{1}\right)
-\nabla_{\xi}\left(  U_{1}\nabla_{\xi}v_{s}\right)  -\frac{\varepsilon_{\ell
}\bar{y}_{\ell}}{2}\nabla_{\xi}u_{s},\label{S6E1-1}\\
0  &  =\Delta_{\xi}W_{1}+U_{1}, \label{S6E2-2}%
\end{align}%
\end{subequations}
\begin{subequations}
\begin{align}
0  &  =\Delta_{\xi}U_{2}-\nabla_{\xi}\left(  u_{s}\nabla_{\xi}W_{2}\right)
-\nabla_{\xi}\left(  U_{1}\nabla_{\xi}W_{1}\right)  -\nabla_{\xi}\left(
U_{2}\nabla_{\xi}v_{s}\right) \nonumber\\
&  +\left(  2\varepsilon_{\ell}\varepsilon_{\ell,\tau}-\varepsilon_{\ell}%
^{2}\right)  \left(  u_{s}+\frac{\xi\nabla_{\xi}u_{s}}{2}\right)
+\varepsilon_{\ell}\bar{y}_{\ell,\tau}\nabla_{\xi}u_{s}-\frac{\varepsilon
_{\ell}\bar{y}_{\ell}}{2}\nabla_{\xi}U_{1}~,\label{S6E1}\\
0  &  =\Delta_{\xi}W_{2}+U_{2}, \label{S6E2}%
\end{align}%
\end{subequations}
\begin{subequations}
\begin{align}
0  &  =\Delta_{\xi}U_{3}-\nabla_{\xi}\left(  u_{s}\nabla_{\xi}W_{3}\right)
-\nabla_{\xi}\left(  U_{1}\nabla_{\xi}W_{2}\right)  -\nabla_{\xi}\left(
U_{2}\nabla_{\xi}W_{1}\right)  -\nabla_{\xi}\left(  U_{3}\nabla_{\xi}%
v_{s}\right) \nonumber\\
&  +\left(  2\varepsilon_{\ell}\varepsilon_{\ell,\tau}-\varepsilon_{\ell}%
^{2}\right)  \left(  U_{1}+\frac{\xi\nabla_{\xi}U_{1}}{2}\right)
-\varepsilon_{\ell}^{2}\frac{\partial U_{1}}{\partial\tau}+\varepsilon_{\ell
}\bar{y}_{\ell,\tau}\nabla_{\xi}U_{1}-\frac{\varepsilon_{\ell}\bar{y}_{\ell}%
}{2}\nabla_{\xi}U_{2}\ ,\label{S6E3}\\
0  &  =\Delta_{\xi}W_{3}+U_{3}, \label{S6E4}%
\end{align}%
\end{subequations}
\begin{subequations}
\begin{align}
0  &  =\Delta_{\xi}U_{4}-\nabla_{\xi}\left(  u_{s}\nabla_{\xi}W_{4}\right)
-\nabla_{\xi}\left(  U_{1}\nabla_{\xi}W_{3}\right)  -\nabla_{\xi}\left(
U_{2}\nabla_{\xi}W_{2}\right)  -\nabla_{\xi}\left(  U_{3}\nabla_{\xi}%
W_{1}\right)  -\nabla_{\xi}\left(  U_{4}\nabla_{\xi}v_{s}\right) \nonumber\\
&  +\left(  2\varepsilon_{\ell}\varepsilon_{\ell,\tau}-\varepsilon_{\ell}%
^{2}\right)  \left(  U_{2}+\frac{\xi\nabla_{\xi}U_{2}}{2}\right)
-\varepsilon_{\ell}^{2}\frac{\partial U_{2}}{\partial\tau}+\varepsilon_{\ell
}\bar{y}_{\ell,\tau}\nabla_{\xi}U_{2}-\frac{\varepsilon_{\ell}\bar{y}_{\ell}%
}{2}\nabla_{\xi}U_{3},\label{S6E3-1}\\
0  &  =\Delta_{\xi}W_{4}+U_{4}. \label{S6E4-2}%
\end{align}
\end{subequations}

\subsection{Computation of $\left(  U_{1},W_{1}\right)  ,\ \left(  U_{2},W_{2}\right).$}

Due to \eqref{S5E5} we must solve \eqref{S6E1}-\eqref{S6E4-2} with conditions:
\begin{equation}
U_{1}\left(  0,\tau\right)  =0,\quad U_{2}\left(  0,\tau\right)  =0,\quad
U_{3}\left(  0,\tau\right)  =0,\quad U_{4}\left(  0,\tau\right)  =0.
\label{U2E2}%
\end{equation}

We can easily obtain an exact solution of \eqref{S6E1-1}-\eqref{S6E2-2}:
\begin{equation}
U_{1}(\xi,\tau)=0,\ \ W_{1}(\xi,\tau)=-\frac{\varepsilon_{\ell}\bar{y}_{\ell}%
}{2}\xi. \label{U2E2a}%
\end{equation}

In order to compute $\left(  U_{2},W_{2}\right)  $ we notice that due to the
linearity of \eqref{S6E1}, \eqref{S6E2} we can split its solution as:%
\[
U_{2}  = U_{2,1}+U_{2,2}+U_{2,3},\qquad
W_{2}  = W_{2,1}+W_{2,2}+W_{2,3},
\]
where $\left(  U_{2,j},W_{2,j}\right)  $, $j=1,2,3,$ solve respectively:%
\begin{subequations}
\begin{gather}
0   = \Delta_{\xi}U_{2,1}-\nabla_{\xi}\left(  u_{s}\nabla_{\xi}%
W_{2,1}\right)  -\nabla_{\xi}\left(  U_{2,1}\nabla_{\xi}v_{s}\right)  +\left(
2\varepsilon_{\ell}\varepsilon_{\ell,\tau}-\varepsilon_{\ell}^{2}\right)
\left(  u_{s}+\frac{\xi\nabla_{\xi}u_{s}}{2}\right)  ,\label{S6E5a}\\
0  =\Delta_{\xi}W_{2,1}+U_{2,1}, 
\label{S6E5b}\\
0   =\Delta_{\xi}U_{2,2}-\nabla_{\xi}\left(  u_{s}\nabla_{\xi}%
W_{2,2}\right)  -\nabla_{\xi}\left(  U_{2,2}\nabla_{\xi}v_{s}\right)
+\varepsilon_{\ell}\bar{y}_{\ell,\tau}\nabla_{\xi}u_{s},\label{S6E6a}\\
0   =\Delta_{\xi}W_{2,2}+U_{2,2}, 
\label{S6E6b}\\
0   =\Delta_{\xi}U_{2,3}-\nabla_{\xi}\left(  u_{s}\nabla_{\xi}%
W_{2,3}\right)  -\nabla_{\xi}\left(  U_{2,3}\nabla_{\xi}v_{s}\right),
\label{S6E7a}\\
0   =\Delta_{\xi}W_{2,3}+U_{2,3}. 
\label{S6E7b}
\end{gather}
\end{subequations}

We will check later that the term $\bar{y}_{\ell,\tau}$ is of order $\left(
\varepsilon_{\ell}\right)  ^{2}$ $\left(  w.l.a\right)  .$ Therefore $U_{2,2}$
will be of order $\left(  \varepsilon_{\ell}\right)  ^{3}$ $\left(
w.l.a\right)  .$ Notice that this means that the terms $U_{k}$ do not have a
dependence $\left(  \varepsilon_{\ell}\right)  ^{k}$ $\left(  w.l.a\right)  .$

On the other hand, at a first glance the system for $\left(  U_{2,3}%
,W_{2,3}\right)  $ could seem a bit odd for the absence of source terms.
Actually $\left(  U_{2,3},W_{2,3}\right)  $ will be chosen as a 
solution of the problem \eqref{S6E7a}, \eqref{S6E7b} which are smooth for
bounded values of $\left\vert \xi\right\vert $, but $W_{2,3}$ becomes
unbounded as $\left\vert \xi\right\vert \rightarrow\infty.$ The contribution
of $\left(  U_{2,3},W_{2,3}\right)  $ will be required to obtain a matching
with some quadratic terms of the outer expansion having the angular
dependencies proportional to $\left\{  \cos\left(  2\theta\right)
,\sin\left(  2\theta\right)  \right\}  $ and giving corrections of order
$\varepsilon_{\ell}^{2}$ $\left(  w.l.a\right)  $. A detailed analysis of the
matching conditions for the terms with this order of magnitude shows that,
after a suitable rotation of the coordinate system, we may assume that the
angular dependencies of the term $\left(  U_{2,3},W_{2,3}\right)  $ are
proportional to $\cos\left(  2\theta\right)  $. We will then assume this
angular dependence in the following.

Due to (\ref{U2E2}) we must have:
\begin{equation}
U_{2,k}\left(  0,\tau\right)  =0,
\qquad k =1,2,3.
\label{sS6E7}%
\end{equation}
The solution of \eqref{S6E5a}, \eqref{S6E5b} satisfying the first condition in
\eqref{sS6E7} was obtained in \cite{V1} (where a slightly different
notation was used). This solution has the form:
\begin{equation}
U_{2,1}\left(  \xi,\tau\right)  =Q_{2,1}\left(  r,\tau\right)  ,\ \ W_{2,1}%
\left(  \xi,\tau\right)  =V_{2,1}\left(  r,\tau\right)  ,\quad r=\left\vert
\xi\right\vert . \label{S6E8a}%
\end{equation}
where:
\begin{subequations}
\begin{align}
g_{1}\left(  r,\tau\right)  & = 
r\frac{\partial V_{2,1}}{\partial r},  
\label{S6E8d}\\
0 & = 
\frac{1}{r}\frac{\partial}{\partial r}\left( r\frac{\partial V_{2,1}}{\partial r} \right) + Q_{2,1}
\label{S6E8c}
\end{align}
with:
\begin{equation}
g_{1}\left(  r,\tau\right)
=\left(  2\varepsilon_{\ell}\varepsilon_{\ell,\tau}-\varepsilon_{\ell}^{2}\right)  
\frac{r^{2}}{\left(  1+r^{2}\right)  ^{2}}\int_{0}^{r^{2}}
\frac{\left(  1+t\right)  ^{2}}{t^{2}}\left[  \log\left(  1+t\right)
-\frac{t}{1+t}\right]  dt. 
\label{S6E9b}
\end{equation}
\end{subequations}
According to the formulas (3.26) and (3.27) in \cite{V1}, we have the following asymptotics:
\begin{align}
Q_{2,1}\left(  r,\tau\right)  &  =\left(  2\varepsilon_{\ell}\varepsilon_{\ell,\tau}-\varepsilon
_{\ell}^{2}\right)  \left[  -\frac{2}{r^{2}}+O\left(  \frac{(\log r )^{2}}{r^{4}}\right)  \right]  
\quad \text{ as }r\rightarrow
\infty,\label{S6E10a1}\\
\frac{\partial V_{2,1}}{\partial r}\left(  r,\tau\right)
  &  =\left(  2\varepsilon_{\ell
}\varepsilon_{\ell,\tau}-\varepsilon_{\ell}^{2}\right)  \left[  \frac
{\log\left(  r^{2}\right)  }{r}-\frac{2}{r}+O\left(  \frac{(\log r)^{2}}{r^{3}}\right)  \right]  
\quad \text{ as }r\rightarrow\infty.
\label{S6E10b1}%
\end{align}

The solution of the system \eqref{S6E6a}, \eqref{S6E6b} is given by the
following simple formula:
\begin{equation}
U_{2,2}(\xi,\tau)=0,\quad W_{2,2}(\xi,\tau)=\varepsilon_{\ell}\bar{y}%
_{\ell,\tau}\cdot\xi. \label{U2E1}%
\end{equation}

We now consider the function $(U_{2,3},W_{2,3}).$ As explained before, this
function, which is unbounded at infinity, is just a homogeneous solution of
the linearized problem. It will be needed due to the effect of the other
singular points at the point under consideration. More precisely, the function
$W$ due to the points placed near $\bar{y}_{k}$ with $k\neq\ell$ gives a
contribution as $\left\vert \xi\right\vert \rightarrow\infty$ that will be
matched with the term $W_{2,3}.$ The angular dependence of this term is
$\cos\left(  2\theta\right)  $ and its size $\varepsilon_{\ell}^{2}\ \left(
w.l.a\right)  $. Therefore we look for a solution $(U_{2,3},W_{2,3})$ of
\eqref{S6E7a}, \eqref{S6E7b} with the form:
\begin{equation}
U_{2,3}(\xi,\tau)=Q_{2,3}\left(  r,\tau\right)  \cos\left(  2\theta\right)  ,
\quad W_{2,3}(\xi,\tau)=V_{2,3}\left(  r,\tau\right)  \cos\left(
2\theta\right) .\label{M0E2}%
\end{equation}
The system \eqref{S6E7a}, \eqref{S6E7b} then reads:
\begin{subequations}
\begin{align}
\frac{1}{r}\frac{\partial}{\partial r}\left(  r\frac{\partial Q_{2,3}%
}{\partial r}\right)  -\frac{4}{r^{2}}Q_{2,3}-\frac{du_{s}}{dr}\frac{\partial
V_{2,3}}{\partial r}+2u_{s}Q_{2,3}-\frac{dv_{s}}{dr}\frac{\partial Q_{2,3}%
}{\partial r}  &  =0,\label{M1E4}\\
\frac{1}{r}\frac{\partial}{\partial r}\left(  r\frac{\partial V_{2,3}%
}{\partial r}\right)  -\frac{4}{r^{2}}V_{2,3}+Q_{2,3}  &  =0. \label{M1E5}%
\end{align}
The smoothness of $(U_{2,3},\ W_{2,3})$ at the origin (cf. also \eqref{S6E7a},
\eqref{S6E7b}) implies:%
\end{subequations}
\begin{equation}
Q_{2,3}\left(  0,\tau\right)  =V_{2,3}\left(  0,\tau\right)  =0. \label{M1E6}%
\end{equation}
It was seen in \cite{V1} (cf.~Theorem \ref{Linear} below) that the space of solutions of (\ref{M1E4}),
(\ref{M1E5}) is a four dimensional linear space spanned by the set of
functions $\left\{  \left(  \psi_{k},\omega_{k}\right)  :k=1,2,3,4\right\}  $.
(We remark that the notation $\left(  \psi_{k},V_{k}\right)  $ was used in
\cite{V1} instead, but we modify it here to avoid repetitions). The condition
(\ref{M1E6}) implies that:%
\[
\left(  Q_{2,3},V_{2,3}\right)  =K_{1}\left(  \psi_{1},\omega_{1}\right)
+K_{3}\left(  \psi_{3},\omega_{3}\right)
\]
for some constants $K_{1},\ K_{3}\in\mathbb{R}$.
If $K_{3}\neq0$, the growth of $\left(  \psi_{3},\omega_{3}\right)  $ as
$\left\vert \xi\right\vert \rightarrow\infty$ would imply that $\left(
\Phi,W\right)  $ are very large for $\left\vert y\right\vert $ of order one,
and this would contradict the hypothesis that $\Phi$ approaches the steady
state in (\ref{U1E1}) as $\tau\rightarrow\infty.$ Therefore $K_{3}=0$ and
$\left(  Q_{2,3},V_{2,3}\right)  $ is given by:%
\begin{equation}
Q_{2,3}\left(  r,\tau\right)
=\frac{8B_{2,3}r^{2}}{\left(  r^{2}+1\right)  ^{3}}\left( r^{2}+3\right),
\quad\ 
V_{2,3}\left(  r,\tau\right)
=\frac{B_{2,3}r^{2}}{\left(  r^{2}+1\right)}\left( r^{2}+3\right), 
\label{M2E2}%
\end{equation}
where $B_{2,3}=B_{2,3}(\tau)\in\mathbb{R}$. Actually $B_{2,3}$ can be expected
to be a function of $\tau$ changing slowly with respect to this variable. By
this we mean that $B_{2,3}(\tau)$ does not have a factor like $e^{-\kappa\tau}$ with $\kappa\not =0$.
The precise value of $B_{2,3}$ will be obtained later by matching the inner and
the outer expansions. It will turn out to be of order $\left(  \varepsilon
_{\ell}\right)  ^{2}\left(  w.l.a\right)  .$ Finally, notice that the formulas
(\ref{M2E2}) have been obtained for functions with angular dependence
$\cos\left(  2\theta\right)  ,$ but similar formulas could be obtained if the
angular dependence is replaced by $\sin\left(  2\theta\right)  .$ The
resulting coefficients $B_{2,3}$ will be denoted for functions with such an
angular dependence as $\bar{B}_{2,3}.$

In the following arguments, several more variables $B_{4,2},\ \bar{B}_{4,2},\ c_{3}\left(  \infty\right)  ,..$ 
will appear. They have some dependence on $\tau,$ but we will not write this dependence explicitly unless needed.

We remark also that the solutions of \eqref{S6E7a}, \eqref{S6E7b} cannot
contain any radial contribution with angular dependence $\cos \theta$. 
Indeed, arguing as in the derivation of (\ref{M2E2}) and
using the fact that it is always possible to add a constant to $V,$ it follows
that such a contribution would yield an additional term in $U_{2,3}$ with the
form $K_{1}\left(  r^{2}-1\right) \left(  r^{2}+1\right)^{-3}
+ K_{2}r \left(  r^{2}+1\right)^{-3}\cos \theta.$
However, if $K_{1}\neq0$ or $K_{2}\neq 0$ there would be a contradiction to
(\ref{S4E4}), (\ref{S5E5}).
Similar arguments exclude angular dependences $\cos\left(  \ell
\theta \right)$ with $\ell>2$, since
they would imply large values for $\Phi,\ W$ in the outer region where
$\left\vert y\right\vert $ is of order one.

\subsection{Computation of $\left(  U_{3},W_{3}\right)  .$}

Since $U_{1}=0$ and $-\nabla_{\xi}\left(  U_{2}\nabla_{\xi}W_{1}\right)
- 2^{-1}\varepsilon_{\ell}\bar{y}_{\ell}\nabla_{\xi}U_{2}=0$ by
\eqref{U2E2a}, the system \eqref{S6E3}, \eqref{S6E4} reads:
\begin{subequations}
\begin{align*}
0  &  =\Delta_{\xi}U_{3}-\nabla_{\xi}\left(  u_{s}\nabla_{\xi}W_{3}\right)
-\nabla_{\xi}\left(  U_{3}\nabla_{\xi}v_{s}\right)  ,\\
0  &  =\Delta_{\xi}W_{3}+U_{3}.
\end{align*}
This system is similar to \eqref{S6E7a}, \eqref{S6E7b}. In order to obtain the
matching of these terms with the corresponding ones in the outer region, we
need an angular dependence proportional to $\cos\left(  3\theta\right)  $.
This dependence is the only one consistent with the rate of growth of these
functions required to obtain the right matching with the outer part. We then
write:
\end{subequations}
\begin{equation}
U_{3}\left(  \xi,\tau\right)  =Q_{3}\left(  r, \tau\right)  \cos\left(
3\theta\right)  , \quad\ W_{3}\left(  \xi, \tau\right)  =V_{3}\left(  r,
\tau\right)  \cos\left(  3\theta\right), 
\label{M1E7a}
\end{equation}
where $(r,\theta )$ is as before.
The function $(Q_{3} , V_{3} )$ fulfills:
\begin{subequations}
\begin{align}
\frac{1}{r}\frac{\partial}{\partial r}\left(  r\frac{\partial Q_{3}}{\partial
r}\right)  -\frac{9}{r^{2}}Q_{3}-\frac{du_{s}}{dr}\frac{\partial V_{3}
}{\partial r}+2u_{s}Q_{3}-\frac{dv_{s}}{dr}\frac{\partial Q_{3}}{\partial r}
&  =0,\label{M1E7}\\
\frac{1}{r}\frac{\partial}{\partial r}\left(  r\frac{\partial V_{3}}{\partial
r}\right)  -\frac{9}{r^{2}}V_{3}+Q_{3}  &  =0, 
\label{M1E8}
\end{align}
and the conditions implied by the regularity properties of $U_{3},\ W_{3}$:%
\end{subequations}
\begin{equation}
Q_{3} \left(  0,\tau\right)  =V_{3}\left(  0,\tau\right)  =0. 
\label{M1E9}%
\end{equation}
Using the solutions of these equations obtained in \cite[Theorems 4.1--4.3]{V1} we have:%
\begin{equation}
Q_{3}\left( r, \tau \right)
=\frac{8B_{3}r^{3}}{\left(  r^{2}+1\right)  ^{3}}\left(  2r^{2}+4\right),
\quad 
V_{3}\left( r, \tau \right)
=\frac{B_{3}r^{3}}{r^{2}+1}\left(  2r^{2}+4\right), 
\label{M2E1}
\end{equation}
for some $B_{3}\in\mathbb{R}.$ As in the case of $B_{2,3},$ $B_{3}$ could have
some slow (meaning non-exponential in $\tau$) dependence on $\tau$. More
precisely, it will behave like $\varepsilon_{\ell}^{3}\left(  w.l.a\right)  .$
We have just written terms with angular dependence $\cos\left(  3\theta
\right)  ,$ but there are similar terms with dependence $\sin\left(
3\theta\right)$ characterized by means of a coefficient $\bar{B}_{3}.$

\subsection{Computation of $\left(  U_{4},W_{4}\right)  .$}

Using \eqref{S6E3-1}, \eqref{S6E4-2} and (\ref{U2E2a}):
\begin{subequations}
\begin{align}
0  = & \Delta_{\xi}U_{4}-\nabla_{\xi}\left(  u_{s}\nabla_{\xi}W_{4}\right)
-\nabla_{\xi}\left(  U_{2}\nabla_{\xi}W_{2}\right)  -\nabla_{\xi}\left(
U_{3}\nabla_{\xi}W_{1}\right)  -\nabla_{\xi}\left(  U_{4}\nabla_{\xi}%
v_{s}\right)  +\nonumber\\
&  +\left(  2\varepsilon_{\ell}\varepsilon_{\ell,\tau}-\varepsilon_{\ell}%
^{2}\right)  \left(  U_{2}+\frac{\xi\nabla_{\xi}U_{2}}{2}\right)
-\varepsilon_{\ell}^{2}\frac{\partial U_{2}}{\partial\tau}+\varepsilon_{\ell
}\bar{y}_{\ell,\tau}\nabla_{\xi}U_{2}-\frac{\varepsilon_{\ell}\bar{y}_{\ell}%
}{2}\nabla_{\xi}U_{3},
\label{S6E3-1...}\\
0  = & \Delta_{\xi}W_{4}+U_{4}. 
\label{S6E4-2...}
\end{align}
Using (\ref{U2E2a}) and (\ref{U2E1}) we observe that:
\end{subequations}
\begin{align*}
-\nabla_{\xi}\left(  U_{3}\nabla_{\xi}W_{1}\right)  -\frac{\varepsilon_{\ell
}\bar{y}_{\ell}}{2}\nabla_{\xi}U_{3}  &  =0\ ,\\
-\nabla_{\xi}\left(  U_{2}\nabla_{\xi}W_{2,2}\right)  +\varepsilon_{\ell}%
\bar{y}_{\ell,\tau}\nabla_{\xi}U_{2}  &  =0.
\end{align*}
Then \eqref{S6E3-1...}, \eqref{S6E4-2...} yields:
\begin{subequations}
\begin{align}
0  = & \Delta_{\xi}U_{4}-\nabla_{\xi}\left(  u_{s}\nabla_{\xi}W_{4}\right)
-\nabla_{\xi}\left(  U_{2}\nabla_{\xi}W_{2,1}\right)  -\nabla_{\xi}\left(
U_{2}\nabla_{\xi}W_{2,3}\right)  -\nonumber\\
&  -\nabla_{\xi}\left(  U_{4}\nabla_{\xi}v_{s}\right)  +\left(  2\varepsilon
_{\ell}\varepsilon_{\ell,\tau}-\varepsilon_{\ell}^{2}\right)  \left(
U_{2}+\frac{\xi\nabla_{\xi}U_{2}}{2}\right)  -\varepsilon_{\ell}^{2}%
\frac{\partial U_{2}}{\partial\tau},
\label{S6E3-10}\\
0  = & \Delta_{\xi}W_{4}+U_{4}. 
\label{S6E4-20}
\end{align}
It is now convenient to split $U_{4},\ W_{4}$ as:%
\end{subequations}
\[
U_{4}=U_{4,1}+U_{4,2},\quad W_{4}=W_{4,1}+W_{4,2},
\]
where:%
\begin{subequations}
\begin{align}
0 = & \Delta_{\xi}U_{4,1}-\nabla_{\xi}\left(  u_{s}\nabla_{\xi}%
W_{4,1}\right)  -\nabla_{\xi}\left(  U_{2,1}\nabla_{\xi}W_{2,1}\right)-\nonumber\\
&  -\nabla_{\xi}\left(  U_{4,1}\nabla_{\xi}v_{s}\right)  
+\left( 2\varepsilon_{\ell}\varepsilon_{\ell,\tau}-\varepsilon_{\ell}^{2}\right)
\left(  U_{2,1}+\frac{\xi\nabla_{\xi}U_{2,1}}{2}\right)  -\varepsilon_{\ell}^{2}
\frac{\partial U_{2,1}}{\partial\tau},
\label{M3E1}\\
0  = & \Delta_{\xi}W_{4,1}+U_{4,1}, 
\label{M3E2}
\end{align}%
\end{subequations}
\begin{subequations}
\begin{align}
0  &  =\Delta_{\xi}U_{4,2}-\nabla_{\xi}\left(  u_{s}\nabla_{\xi}%
W_{4,2}\right)  -\nabla_{\xi}\left(  U_{4,2}\nabla_{\xi}v_{s}\right)
+S_{4,2}\left(  \xi,\tau\right),
\label{M3E4a}\\
0  &  =\Delta_{\xi}W_{4,2}+U_{4,2}
\label{M3E3a}%
\end{align}
with
\begin{multline}
S_{4,2}\left(  \xi,\tau\right)  
=  -\nabla_{\xi}\left(  U_{2,3}\nabla_{\xi}W_{2,1}\right)  
   - \nabla_{\xi}\left(  U_{2}\nabla_{\xi}W_{2,3}\right) + \\
   + \left(  2\varepsilon_{\ell}\varepsilon_{\ell,\tau}
   - \varepsilon_{\ell}^{2}\right) 
   \left(  U_{2,3}+\frac{\xi\nabla_{\xi}U_{2,3}}{2}\right)
-\varepsilon_{\ell}^{2}\frac{\partial U_{2,3}}{\partial\tau}. 
\label{M3E3c}
\end{multline}
\end{subequations}

The system \eqref{M3E1}, \eqref{M3E2} is the same as (3.16)--(3.18) in
\cite{V1} and the solution can be obtained as indicated in that paper
(although a slightly different notation for the functions is used there).
The relevant information that we will need in this paper is the asymptotics of
the solutions for large values of $\left\vert \xi\right\vert $ which can be
computed as follows. We define:
\begin{equation}
U_{4,1}=-\frac{1}{r}\frac{\partial g_{2}}{\partial r}\ ,\quad\ \frac{\partial
W_{4,1}}{\partial r}=\frac{g_{2}}{r}.
\label{Z1E1}
\end{equation}
Then:
\begin{equation}
g_{2}=\varepsilon_{\ell}^{2}\left(  2\varepsilon_{\ell}\varepsilon_{\ell,\tau
}-\varepsilon_{\ell}^{2}\right)  _{\tau}\left[  \frac{r^{2}\log r}{4}%
-\frac{7r^{2}}{16}+O\left(  \left(  \log r\right)  ^{2}\right)  \right]
+\left(  2\varepsilon_{\ell}\varepsilon_{\ell,\tau}-\varepsilon_{\ell}%
^{2}\right)  ^{2}\left[  -\frac{r^{2}}{8}+O\left(  \left(  \log r\right)
^{2}\right)  \right]  \label{Z1E2}%
\end{equation}
as $r\rightarrow\infty.$ Similar asymptotic formulas can be obtained for
$\partial g_{2}/\partial r$.

In order to solve \eqref{M3E3a}-\eqref{M3E3c} we need to compute
$S_{4,2}\left(  \xi,\tau\right) $. Using \eqref{S6E8a} and \eqref{M0E2} we
obtain, after some elementary but tedious computations:%
\begin{equation}
S_{4,2}\left(  \xi,\tau\right)  
= G_{1}\left(  r,\tau\right)  
+ G_{2}\left(  r,\tau\right) \cos\left( 2\theta \right)  
+ G_{3}\left(  r,\tau\right) \cos\left( 4\theta \right), 
\label{Z1E3}
\end{equation}
where:
\begin{subequations}
\begin{align}
G_{1}\left(  r,\tau\right)  = & -\frac{1}{2r}\frac{\partial}{\partial r}
\left( rQ_{2,3} \frac{\partial V_{2,3}}{\partial r} \right), 
\label{Z1E4} \\
G_{2}\left(  r,\tau\right)  = &  \frac{ 4Q_{2,1} V_{2,3} }{ r^{2} }
-\frac{1}{r} \frac{\partial}{\partial r} \left( r Q_{2,1} \frac{\partial V_{2,3} }{ \partial r }\right)  
-\frac{1}{r }\frac{\partial}{\partial r} \left( rQ_{2,3} \frac{\partial V_{2,1}}{\partial r} \right) - \notag \\
&  -\varepsilon_{\ell}^{2} \frac{\partial Q_{2,3}}{\partial\tau}
+ \left(  2\varepsilon_{\ell}\varepsilon_{\ell,\tau} - \varepsilon_{\ell}^{2}\right) 
 \frac{48B_{2,3} r^{2}}{\left(  r^{2}+1\right)^{4}}, 
\label{Z1E5}\\
G_{3}\left(  r,\tau\right) = & \frac{4Q_{2,3} V_{2,3}}{r^{2}}
- \frac{1}{2r}\frac{\partial}{\partial r}\left( rQ_{2,3} \frac{\partial V_{2,3}}{\partial r}\right). 
\label{Z1E6}
\end{align}
\end{subequations}
The form of $S_{4,2}\left(  \xi,\tau\right)$ in (\ref{Z1E3}) suggests to
split $\left(  U_{4,2},W_{4,2}\right)$ as: 
\[
U_{4,2}=U_{4,2,1}+U_{4,2,2}+U_{4,2,3},\quad\ W_{4,2}=W_{4,2,1}+W_{4,2,2} + W_{4,2,3}
\]
with $\left\{  \left(  U_{4,2,k},W_{4,2,k}\right)  :k=1,2,3\right\}$ having
the angular dependencies $\cos\left(  2\left(  k-1\right)  \theta\right)  $.
Then:
\begin{subequations}
\begin{align}
0  &  =\Delta_{\xi}U_{4,2,k}-\nabla_{\xi}\left(  u_{s}\nabla_{\xi}
W_{4,2,k}\right)  -\nabla_{\xi}\left(  U_{4,2,k}\nabla_{\xi}v_{s}\right)
+G_{k}\cos\left(  2\left(  k-1\right)  \theta\right),
\label{Z1E7}\\
0  &  =\Delta_{\xi}W_{4,2,k}+U_{4,2,k}
\label{Z1E8}%
\end{align}
with boundary conditions:%
\end{subequations}
\begin{equation}
U_{4,2,k}\left(  0,\tau\right)  =0,\qquad k=1,2,3. 
\label{Z2E1}%
\end{equation}
The boundary conditions for $U_{4,2,2},\ U_{4,2,3}$ are just consequences of
the angular dependence of these functions and their smoothness properties,
whereas condition (\ref{Z2E1}) for $U_{4,2,1}$ is just a consequence of (\ref{S5E5}). 
On the other hand, the angular dependencies of the functions
$W_{4,2,2},\,W_{4,2,3}$ yield:
\begin{equation}
W_{4,2,2}\left(  0,\tau\right)  =W_{4,2,3}\left(  0,\tau\right)  =0,
\label{Z2E2}
\end{equation}
whereas (\ref{S4E4}) implies:
\begin{equation}
\frac{\partial W_{4,2,1}}{\partial r}\left(  0,\tau\right)  =0. \label{Z2E3}%
\end{equation}

\subsection{Computation of $\left(  U_{4,2,1},\, W_{4,2,1}\right)  $.}

\begin{lemma}
Under the conditions \eqref{Z2E1} and \eqref{Z2E3} the system \eqref{Z1E7},
\eqref{Z1E8} with $k = 1$ has a unique exact solution:
\vspace{-0.2cm}
\begin{equation}
U_{4,2,1}
=2 \left( B_{2,3} \right)^{2}\frac{r^{4}\left( r^{4} + 4r^{2} + 9 \right)}{\left( r^{2} + 1 \right)^{4}},
\ \quad 
\frac{\partial W_{4,2,1}}{\partial r} 
= -\left(  B_{2,3}\right)^{2}\frac{r^{5}\left(  r^{2}+3\right) }{\left(  1+r^{2}\right)  ^{3}}, 
\label{Z2E9}
\end{equation}
where $r = |\xi|$ and $B_{2,3}$ is the parameter in \eqref{M2E2}.
\end{lemma}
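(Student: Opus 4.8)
The plan is to solve the system \eqref{Z1E7}, \eqref{Z1E8} with $k=1$ explicitly, exploiting the fact that it is a \emph{radial} linear inhomogeneous elliptic system whose forcing term $G_1(r,\tau)$ is, by \eqref{Z1E4}, an exact radial divergence. First I would reduce the two scalar equations to a single ODE by using the substitution analogous to \eqref{Z1E1}: writing $U_{4,2,1}=-\tfrac{1}{r}\partial_r g$ and $\partial_r W_{4,2,1}=g/r$, the second equation \eqref{Z1E8} is satisfied automatically, and the first becomes a second-order linear ODE for $g$ whose right-hand side is obtained by integrating $G_1$ once in $r$. Since $G_1=-\tfrac{1}{2r}\partial_r\!\big(rQ_{2,3}\,\partial_r V_{2,3}\big)$, that integration is immediate and produces the explicit rational function $-\tfrac12\,Q_{2,3}\,\partial_r V_{2,3}$ (up to a constant fixed by regularity at $r=0$), where $Q_{2,3},V_{2,3}$ are the known expressions \eqref{M2E2} with coefficient $B_{2,3}$.

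\textbf{Key steps.} (1) Substitute the closed forms \eqref{M2E2} for $Q_{2,3}$ and $V_{2,3}$ to get $G_1$ as an explicit rational function of $r$ times $(B_{2,3})^2$; note the overall factor $(B_{2,3})^2$ since both $Q_{2,3}$ and $V_{2,3}$ are linear in $B_{2,3}$. (2) Solve the resulting linear ODE for $g$ (equivalently, directly for $W_{4,2,1}$ via $\partial_r W_{4,2,1}=g/r$); the homogeneous solutions of the radial linearized operator at this order are the restrictions of the functions $(\psi_k,\omega_k)$ of Theorem~\ref{Linear} to the radial ($\cos 0\theta$) sector, namely the generators behaving like constants and like $(r^2-1)/(r^2+1)^2$-type profiles, and one selects the particular solution that is smooth at the origin and does not grow in a way inconsistent with $\Phi$ approaching the steady state. (3) Impose the boundary conditions \eqref{Z2E1}, i.e.\ $U_{4,2,1}(0,\tau)=0$, and \eqref{Z2E3}, i.e.\ $\partial_r W_{4,2,1}(0,\tau)=0$; these two conditions pin down the two free constants uniquely. (4) Verify by direct substitution that the candidate pair \eqref{Z2E9} solves \eqref{Z1E7}, \eqref{Z1E8} with $k=1$, and check the claimed behaviour at $r=0$ (the numerator of $U_{4,2,1}$ vanishes to order $r^4$, so $U_{4,2,1}(0,\tau)=0$, and $\partial_r W_{4,2,1}\sim -(B_{2,3})^2 r^5\cdot 3\to 0$ as $r\to 0$). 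Uniqueness follows because any two solutions differ by a solution of the homogeneous radial system vanishing appropriately at the origin, and the only such solution consistent with the growth restrictions is identically zero.

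\textbf{Main obstacle.} The conceptual content is light; the real difficulty is purely computational bookkeeping. The expression for $G_1$, although a single derivative, involves the product $Q_{2,3}\,\partial_r V_{2,3}$ of two rational functions with denominators $(r^2+1)^3$ and $(r^2+1)$, so after differentiation and simplification one is manipulating rational functions with denominators up to $(r^2+1)^4$; keeping the algebra organized—ideally by working with the variable $s=r^2$ and factoring aggressively at each stage—is where errors are most likely to creep in. A secondary subtlety is making sure the correct particular solution is chosen: one must confirm that the alternative homogeneous pieces (the ones growing like $r^2$ or worse at infinity) are genuinely absent, using, as in the derivation of \eqref{M2E2} and \eqref{M2E1}, the requirement that $(\Phi,W)$ stay bounded of the right order in the outer region $|y|\approx 1$. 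Once the particular solution is in hand, the verification in step (4) is a mechanical (if lengthy) check that I would relegate to the closed-form identities for $u_s,v_s$ in \eqref{S4E2} and their derivatives.
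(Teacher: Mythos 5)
Your proposal is correct and follows essentially the same route as the paper: the paper likewise sets $M_{4,2,1}=r\,\partial_r W_{4,2,1}$ (your $g$), integrates the divergence-form source $G_1$ once using \eqref{Z2E1}, solves the resulting linear ODE explicitly (via the substitution $M_{4,2,1}=r^2F_{4,2,1}/(1+r^2)^2$ rather than variation of constants), and fixes the two free constants by regularity at the origin together with \eqref{Z2E1}, \eqref{Z2E3}. The only caveat is a cosmetic one: the radial ($L=0$) homogeneous solutions are not covered by Theorem~\ref{Linear} (stated for $L\ge 2$), and the bounded homogeneous mode $\propto (r^2-1)/(r^2+1)^3$ is eliminated by the normalization $U_{4,2,1}(0,\tau)=0$ rather than by growth at infinity, exactly as your step (3) accounting already implies.
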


\begin{proof}
Using (\ref{Z1E4}) and (\ref{Z1E7}) we obtain:
\[
0 =\frac{1}{r}\frac{\partial}{\partial r}\left(  r\frac{\partial U_{4,2,1}%
}{\partial r}\right)  -\frac{1}{r}\frac{\partial}{\partial r}\left(
ru_{s}\frac{\partial W_{4,2,1}}{\partial r}\right)  -\frac{1}{r}\frac
{\partial}{\partial r}\left(  rU_{4,2,1}\frac{d v_{s}}{dr}\right)  
-\frac{1}{2r}\frac{\partial}{\partial r}\left(  rQ_{2,3}%
\frac{\partial V_{2,3}}{\partial r}\right)  .
\]
Integrating this equation and using (\ref{M2E2}) as well as (\ref{Z2E1}) we
obtain:%
\begin{equation}
r\frac{\partial U_{4,2,1}}{\partial r}-u_{s}\left(  r\frac{\partial W_{4,2,1}%
}{\partial r}\right)  -rU_{4,2,1}\frac{d v_{s}}{dr}
=\frac{rQ_{2,3}}{2}\frac{\partial V_{2,3}}{\partial r}. \label{Z2E4}%
\end{equation}
On the other hand, defining
\begin{equation}
M_{4,2,1}=r\frac{\partial W_{4,2,1}}{\partial r} \label{Z2E4a}%
\end{equation}
and using (\ref{Z1E8}), we obtain:%
\begin{equation}
U_{4,2,1}=-\frac{1}{r}\frac{\partial M_{4,2,1}}{\partial r}. 
\label{T8E1}
\end{equation}
The smoothness of the function $W_{4,2,1}$ implies $M_{4,2,1}\left(  0,
\tau\right)  =0$.
Using (\ref{Z2E1}) we then obtain:%
\begin{equation}
M_{4,2,1}\left(  r, \tau\right)  =o\left(  r^{2}\right)  
\quad \text{ as } r\rightarrow0. 
\label{T8E2}
\end{equation}
Plugging (\ref{Z2E4a}) and (\ref{T8E1}) into (\ref{Z2E4}) we obtain:%
\begin{equation}
r\frac{\partial}{\partial r}\left(  \frac{1}{r}\frac{\partial M_{4,2,1}%
}{\partial r}\right)  +\frac{8}{\left(  1+r^{2}\right)  ^{2}}M_{4,2,1}%
+\frac{4r}{1+r^{2}}\frac{\partial M_{4,2,1}}{\partial r}=-\frac{rQ_{2,3}}%
{2}\frac{\partial V_{2,3}}{\partial r}. \label{T8E2a}%
\end{equation}
In order to solve this equation we use the change of variables:%
\begin{equation}
M_{4,2,1}= \frac{r^{2}F_{4,2,1}}{\left(  1+r^{2}\right)  ^{2}}. \label{T8E3}%
\end{equation}
Note that (\ref{T8E2}) implies%
\begin{equation}
F_{4,2,1}=o\left(  1\right)  \text{ as } r \rightarrow0. \label{T8E4}%
\end{equation}
Plugging (\ref{T8E3}) into (\ref{T8E2a}), we obtain:%
\begin{equation}
\frac{r^{2}}{\left(  1+r^{2}\right)  ^{2}}\frac{\partial^{2}F_{4,2,1}
}{\partial r^{2}}-\frac{r}{\left(  r^{2}+1\right)  ^{3}}\left(  r^{2}
-3\right)  \frac{\partial F_{4,2,1}}{\partial r}=-\frac{rQ_{2,3}}{2}
\frac{\partial V_{2,3}}{\partial r}. \label{Z2E5}%
\end{equation}
Using now (\ref{M2E2}) to compute the right-hand side of (\ref{Z2E5}) we
arrive at:
\[
r\frac{\partial^{2}F_{4,2,1}}{\partial r^{2}}
-\frac{r^{2}-3}{r^{2} + 1}\frac{\partial F_{4,2,1}}{\partial r}
=-8\left( B_{2,3}\right)^{2}\frac{r^{3}\left(  r^{2}+3\right) }{\left(
r^{2}+1\right)  ^{3}}\left(  r^{4}+2r^{2}+3\right)  .
\]
This is a first order linear differential equation for $\partial F_{4,2,1}/\partial r$ 
that can be integrated explicitly. After some computations we obtain:
\begin{equation}
\frac{\partial F_{4,2,1}}{\partial r}
=-\frac{4\left(  B_{2,3}\right)^{2} r^{3}\left(  r^{4}+3r^{2}+3\right)  }{\left(  r^{2}+1\right)^{2}}.
\label{Z2E6}%
\end{equation}
In the derivation of (\ref{Z2E6}) we have used that, due to (\ref{T8E4}), the
value of $F_{4,2,1}\left(  0, \tau\right)  $ must be finite. Integrating now
(\ref{Z2E6}) and using also (\ref{T8E4}) we obtain:%
\[
F_{4,2,1}
 = -\left(  B_{2,3}\right)^{2} \frac{r^{4}\left(  r^{2}+3\right)  }{r^{2}+1}.
\]
Using now (\ref{T8E2}) and (\ref{T8E3}) we have:%
\[
M_{4,2,1}=-\left(  B_{2,3}\right)  ^{2}\frac{r^{6}\left(  r^{2}+3\right)
}{\left(  1+r^{2}\right)  ^{3}},
\]
whence (\ref{Z2E4a}) and (\ref{T8E1}) yield the desired result.
\end{proof}

\subsection{Computation of $\left(  U_{4,2,2},W_{4,2,2}\right)  .$}

\subsubsection{Reduction of the problem to ODEs for $Q_{4,2,2},\ V_{4,2,2}.$}

The functions $U_{4,2,2},\ W_{4,2,2}$ satisfy the system \eqref{Z1E7},
\eqref{Z1E8} with $k=2$ together with conditions (\ref{Z2E1}), (\ref{Z2E2}). In order to
remove angular dependence we look for solutions of these equations in the
form:
\[
U_{4,2,2}=Q_{4,2,2}\left(  r,\tau\right)  \cos\left(  2\theta\right),\quad
W_{4,2,2}=V_{4,2,2}\left(  r,\tau\right)  \cos\left(  2\theta\right),
\]
where $(r, \theta )$ is as before.
It then follows from \eqref{Z1E7} and \eqref{Z1E8} with $k=2$ as well as \eqref{S4E2} that:
\begin{subequations}
\begin{align}
0  &  =\frac{1}{r}\frac{\partial}{\partial r}\left(  r\frac{\partial
Q_{4,2,2}}{\partial r}\right)  -\frac{4}{r^{2}}Q_{4,2,2}+\frac{32r}{\left(
r^{2}+1\right)  ^{3}}\frac{\partial V_{4,2,2}}{\partial r}+\frac{4r}{r^{2}%
+1}\frac{\partial Q_{4,2,2}}{\partial r}+\frac{16}{\left(  1+r^{2}\right)
^{2}}Q_{4,2,2}+G_{2},\label{Z2E10}\\
0  &  =\frac{1}{r}\frac{\partial}{\partial r}\left(  r\frac{\partial
V_{4,2,2}}{\partial r}\right)  -\frac{4}{r^{2}}V_{4,2,2}+Q_{4,2,2}.
\label{Z2E11}%
\end{align}
The precise formula of $G_{2}$ may be computed,
using (\ref{S6E8c}), (\ref{S6E8d}) and (\ref{Z1E5}), as:
\end{subequations}
\begin{multline}
G_{2}  
=-\frac{8B_{2,3}r\left(  r^{4}+4r^{2}+9\right)  }{\left(r^{2}+1\right)^{3}}
\frac{\partial g_{1}}{\partial r}
+\frac{32B_{2,3}\left( r^{2}-3\right)}{\left(  r^{2}+1\right)^{4}}g_{1} + \\
  + \frac{8B_{2,3}\left(  2\varepsilon_{\ell}\varepsilon_{\ell,\tau}
  - \varepsilon_{\ell}^{2}\right)  r^{2}}{\left(  r^{2}+1\right)  ^{4}}
  \left( r^{4}+2r^{2}+9\right)  
  -\left(  B_{2,3}\right)_{\tau}
  \frac{8\varepsilon_{\ell}^{2}r^{2}\left(  r^{2}+3\right)}{\left( r^{2}+1\right)^{3}},
\label{S8E4}
\end{multline}
where $g_{1}$ is the function as in (\ref{S6E9b}). The derivation of
\eqref{S8E4} requires just a long but
elementary computation. On the other hand, the conditions (\ref{Z2E1}) and
(\ref{Z2E2}) imply:
\begin{equation}
Q_{4,2,2}\left(  0,\tau\right)  =V_{4,2,2}\left(  0,\tau\right)  =0.
\label{Z2E12}
\end{equation}
The system \eqref{Z2E10}, \eqref{Z2E11} is a nonhomogeneous linear system with
source term $G_{2}$ as in (\ref{S8E4}). In order to study the asymptotics of
their solutions we examine in detail the solutions of the homogeneous part of
this system.

\subsubsection{Study of the homogeneous system.}

The homogeneous part of the system \eqref{Z2E10}, \eqref{Z2E11} can be written
as the linear system:%
\begin{subequations}
\begin{align}
0  &  =\frac{1}{r}\frac{d}{dr}\left(  r\frac{d\psi}{dr}\right)  
- \frac{L^{2}}{r^{2}}\psi+\frac{32r}{\left( r^{2}+1\right)^{3}}\frac{d\omega}{dr}
+\frac{4r}{r^{2}+1}\frac{d\psi}{dr}
+\frac{16}{\left(  1+r^{2}\right)  ^{2}}
\psi,\label{Z3E3}\\
0  &  =\frac{1}{r}\frac{d}{dr}\left(  r\frac{d\omega}{dr}\right)  
-\frac{L^{2}}{r^{2}}\omega+\psi 
\label{Z3E4}
\end{align}
with $L=2.$ 
This system was studied in detail in \cite{V1}. 
Four linearly independent solutions $\left(  \psi_{k},\omega_{k}\right)  ,\,k=1,2,3,4$, 
were obtained and their asymptotics for large
and small $r$ were computed there. We need to compute an error term in
the asymptotics of $\omega_{4}$ for $L=2,3,4,...$ in a manner more detailed
than in \cite{V1}. The following result is basically a reformulation of
\cite[Theorem 4.3]{V1}.
\end{subequations}
\begin{theorem}
\label{Linear} Suppose that $L=2,3,4,...$ A general solution of
\eqref{Z3E3}, \eqref{Z3E4} is a linear combination of four particular
functions $\left\{  \left(  \psi_{k},\omega_{k}\right)  ,\ k=1,2,3,4\right\}$, 
whose asymptotics are given by:
\begin{subequations}
\begin{gather}
\psi_{1}\left(  r\right)  = \frac{8r^{L}}{\left(  r^{2}+1\right) ^{3}}
\left[  \left( L-1\right)  r^{2} + L+1 \right]  ,\ \
\omega_{1}\left(  r\right) =\frac{r^{L}}{r^{2}+1}\left[ \left( L-1\right) r^{2}
+ L + 1 \right], 
\label{Z3E5}\\
\psi_{2}\left(  r\right)  = \frac{8}{r^{L}\left(  r^{2}+1\right)  ^{3}}\left[
\left(  L+1\right)  r^{2} +  L-1 \right],\ \
\omega_{2} \left(  r\right) =\frac{1}{r^{L}\left(  r^{2}+1\right)  }\left[  \left(  L+1\right)  r^{2} + L-1 \right], 
\label{Z3E6}\\
\psi_{3}\left(  r\right)   \sim 8r^{L}
\ \text{ as } r\rightarrow 0^{+},
\quad
\omega_{3}\left(  r\right)  \sim -r^{L}
\ \text{ as } r\rightarrow0^{+}, 
\label{Z3E7}\\
\psi_{3} \left(  r\right)  \sim 16K_{L}r^{\sqrt{4+L^{2}}-2}
\ \text{ as } r\rightarrow\infty,
\quad
\omega_{3}\left(  r\right) \sim -4K_{L}r^{\sqrt{4+L^{2}}} 
\text{ as } r\rightarrow\infty, 
\label{Z3E8}\\
\psi_{4}\left(  r\right)  \sim8r^{-L}
\ \text{ as } r \rightarrow 0^{+},
\quad
\omega_{4}\left(  r\right) \sim -r^{-L} 
\ \text{ as } r \rightarrow0^{+},
\label{Z3E9}\\
\psi_{4}\left(  r\right)  \sim 16C_{L}r^{-\sqrt{4+L^{2}}-2}
\ \text{ as } r \rightarrow\infty,
\quad
\omega_{4}\left(  r\right) \sim C_{L}\left( \kappa_{L}r^{-L}-4r^{-\sqrt{4+L^{2}}}\right)  
+ o\left(  r^{-L-2}\right)  
\ \text{ as } r \rightarrow \infty
\label{Z3E10}
\end{gather}
for some real numbers $C_{L}, K_{\ell}$ and $\kappa_{L}.$
\end{subequations}
\end{theorem}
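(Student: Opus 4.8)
The plan is to reduce the second-order system \eqref{Z3E3}, \eqref{Z3E4} to a form amenable to explicit ODE analysis by exploiting the special algebraic structure inherited from the linearization around the stationary solution $(u_s, v_s)$ in \eqref{S4E2}. First I would reconstruct the two explicit solutions: since $u_s, v_s$ satisfy the Liouville-type relations used throughout Section \ref{Innerexp}, one checks directly by substitution that $(\psi_1, \omega_1)$ and $(\psi_2, \omega_2)$ in \eqref{Z3E5}, \eqref{Z3E6} solve the system; these come from the scaling and translation (modulated by the angular mode $L$) invariances of the stationary equation, so writing the ansatz $\omega = r^L P(r^2)/(r^2+1)$ with $P$ a linear polynomial and matching coefficients produces them with no genuine obstacle. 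The key identity to record is that $-du_s/dr\,\omega' + 2u_s\psi - dv_s/dr\,\psi'$ collapses nicely because $dv_s/dr = -r u_s \cdot (\text{rational})$; this is the same computation underlying \eqref{M1E4}, \eqref{M1E5}, \eqref{M1E7}, \eqref{M1E8}.

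Next, having two explicit solutions, I would obtain the remaining two by reduction of order, or more transparently by passing to the limit $r\to\infty$ where the system decouples to leading order: for large $r$, $u_s \sim 8/r^4$ and $dv_s/dr \sim -4/r$, so \eqref{Z3E3}, \eqref{Z3E4} become, to leading order, a constant-coefficient Euler system in $\log r$, namely $\psi'' + r^{-1}\psi' - L^2 r^{-2}\psi \approx 0$ and the same for $\omega$ with the coupling term $\psi$. Diagonalizing the indicial equation gives exponents $\pm L$ for the decoupled part and, for the coupled block, $\pm\sqrt{4+L^2}$ (the $4$ coming from the $+\psi$ coupling shifting the Euler exponent): this is exactly the source of the exponents $\sqrt{4+L^2}-2$, $-\sqrt{4+L^2}-2$ in $\psi_3, \psi_4$ and $\sqrt{4+L^2}$, $-\sqrt{4+L^2}$ in $\omega_3, \omega_4$, with the shift by $2$ between $\psi$ and $\omega$ produced by \eqref{Z3E4}. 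The small-$r$ asymptotics $\psi_3 \sim 8r^L$, $\psi_4 \sim 8 r^{-L}$ follow similarly: near $r=0$ the system is again Euler-type with indicial roots $\pm L$, and one normalizes the constants so that the leading coefficients match those chosen in \cite{V1}.

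The substantive point—and the part I would treat most carefully—is the \emph{refined} expansion of $\omega_4$ at infinity in \eqref{Z3E10}, namely $\omega_4(r) \sim C_L(\kappa_L r^{-L} - 4 r^{-\sqrt{4+L^2}}) + o(r^{-L-2})$, which is sharper than what \cite{V1} records and is the reason this theorem is restated here (it will be needed to resolve $(U_{4,2,2}, W_{4,2,2})$). Here the two decaying homogeneous modes have exponents $-L$ and $-\sqrt{4+L^2}$; since $\sqrt{4+L^2} > L$ for $L \geq 2$, the $r^{-L}$ term dominates, but I would need to show the correction is genuinely $o(r^{-L-2})$ rather than merely $o(r^{-L})$. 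This forces a two-term matched-asymptotics argument: write $\omega_4 = C_L\kappa_L r^{-L} + (\text{correction})$, substitute into \eqref{Z3E4} to get $\psi_4$ to subleading order, feed that into \eqref{Z3E3}, and track the forcing generated by the now non-negligible terms $16(1+r^2)^{-2}\psi$ and $32r(1+r^2)^{-3}\omega'$, which decay like $r^{-4}$ times the leading behavior—precisely the source of the $r^{-L-2}$ threshold. The bookkeeping is that these corrections do not resonate with the homogeneous exponents $-L, -\sqrt{4+L^2}$ (one must check $-L-2 \neq -\sqrt{4+L^2}$, i.e. $L \neq$ a specific value, automatic for integer $L \geq 2$), so a particular solution of the claimed order exists and the error is controlled by a standard variation-of-parameters estimate against the Wronskian of $(\omega_1,\omega_2,\omega_3,\omega_4)$. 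I would present this as the main obstacle; everything else is substitution and indicial-equation arithmetic.
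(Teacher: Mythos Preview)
Your overall architecture---verify $(\psi_1,\omega_1),(\psi_2,\omega_2)$ by substitution, read off the remaining exponents from the indicial equation at $r=0$ and $r=\infty$, then refine $\omega_4$ by a two-term expansion---is reasonable for the leading-order claims \eqref{Z3E5}--\eqref{Z3E9}, and indeed those are simply quoted from \cite{V1}. But for the sharpened statement \eqref{Z3E10} the paper takes a genuinely different and more efficient route, and your proposed endgame has a soft spot worth flagging.

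The paper does \emph{not} work perturbatively around the large-$r$ Euler system. Instead it introduces the exact change of variables
\[
\psi=\frac{8r^{-L}}{(r^{2}+1)^{3}}(F+G),\qquad \omega=\frac{r^{-L}}{r^{2}+1}(F-G),
\]
which decouples \eqref{Z3E3}--\eqref{Z3E4} \emph{globally}, not just asymptotically: $G$ satisfies a homogeneous second-order ODE with a regular singular point at $r=\infty$ (so Frobenius gives a convergent expansion $G_\beta(r)=C_L r^{\beta_L}+\cdots$ with $\beta_L=4+L-\sqrt{4+L^2}$), while $F$ satisfies an inhomogeneous equation whose homogeneous solutions are the \emph{explicit polynomials} $F_{1,h}=(L+1)r^2+(L-1)$ and $F_{2,h}=r^{2L}[(L-1)r^2+(L+1)]$. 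Variation of parameters against this polynomial pair is then completely explicit, the integrals converge because $\beta_L\in(2,4)$, and one reads off $F_\beta(r)=C_L[r^{\beta_L}+\kappa_L r^{2}+\cdots]$ with a finite constant $\kappa_L$ appearing as $\int_0^\infty W(\eta)\,d\eta$. Substituting back gives \eqref{Z3E10} directly, with the $o(r^{-L-2})$ error coming for free from the order of the next Frobenius term.

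By contrast, your plan to control the error ``by a standard variation-of-parameters estimate against the Wronskian of $(\omega_1,\omega_2,\omega_3,\omega_4)$'' is, as written, circular: $\omega_4$ is precisely the object whose refined asymptotics you are trying to establish, so you cannot use its full Wronskian row as input. If instead you mean variation of parameters for the \emph{leading-order} Euler system with the subleading terms treated as forcing, that can be made to work, but it requires an iteration and a bootstrap to reach $o(r^{-L-2})$ rather than merely $o(r^{-L})$, and you would have to verify that no logarithmic resonance appears---something the paper's $(F,G)$ decoupling makes automatic because the Frobenius indices at infinity for the $G$-equation are $\beta_L$ and $4+L+\sqrt{4+L^2}$, manifestly non-integer apart. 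The substitution is the idea you are missing; with it, the refined expansion is a computation rather than an asymptotic argument.
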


\begin{remark}
We have $C_{L}, K_{\ell} >0$, whereas $\kappa_{L}$ could be zero for some $L$.
\end{remark}

\begin{remark}
The only difference between Theorem \ref{Linear} and \cite[Theorem 4.3]{V1} is
that the last formula in \eqref{Z3E10} is written as $\omega_{4}=o\left(
r^{-L}\right)$ as $r\rightarrow\infty$ in \cite[Theorem 4.3]{V1}. There is a
typo there. The correct formula intended in that paper is $\omega_{4}=O\left(
r^{-L}\right)  $ as $r\rightarrow\infty.$ Formula \eqref{Z3E10} provides a
precise asymptotics for $\omega_{4}.$
\end{remark}

\begin{proof}
We need to prove only (\ref{Z3E10}). To show it we define, given a solution
$\left(  \psi,\omega\right)  $ of \eqref{Z3E3}, \eqref{Z3E4}, two functions
$F,G$ by means of:
\begin{equation}
\psi=\frac{8r^{-L}}{\left(  r^{2}+1\right)  ^{3}}\left(  F+G\right)
,\quad\omega=\frac{r^{-L}}{r^{2}+1}\left(  F-G\right)  . \label{S9E5a}%
\end{equation}
Then:
\begin{gather}
\frac{d^2 G}{dr^2} +\left(  \frac{1-2L}{r}-\frac{8r}{r^{2}+1}\right)  \frac{dG}{dr}
+\left( \frac{8L+12}{r^{2}+1}-\frac{16}{\left(  r^{2}+1\right)^{2}}\right) G
=0,\label{S9E5}\\
\frac{d^2 F}{dr^2} +\left(  \frac{1-2L}{r}-\frac{4r}{r^{2}+1}\right)  \frac{dF}{dr}
+ \frac{4\left( L+1\right)}{r^{2}+1}F
=\frac{4}{ r^{2} + 1 }\left( r \frac{dG}{dr}-\left(  L+2\right)  G\right)  
\equiv S\left(  r\right). 
\label{S9E6}
\end{gather}
It was proven in \cite{V1} that there exists a unique solution of
(\ref{S9E5}) satisfying:%
\[
G_{\beta}\left(  0\right)  =1,\quad G_{\beta}\left(  r\right)  \sim
C_{L}r^{\beta_{L}} \quad\text{ as }r\rightarrow\infty,\ \beta_{L}%
=4+L-\sqrt{4+L^{2}}.
\]
Moreover, since the point $r=\infty$ is a regular singular point for
(\ref{S9E5}), we can use Frobenius theory to compute a power series expansion
for $G_{\beta}\left(  r\right)  $ as $r\rightarrow\infty$ to observe:%
\begin{equation}
G_{\beta}\left(  r\right)  =C_{L}\left[  r^{\beta_{L}}+\frac{2\sqrt{L^{2}%
+4}-1}{\sqrt{L^{2}+4}+1}r^{\beta_{L}-2}+O\left(  r^{\beta_{L}-4}\right)
\right]  \quad\text{ as }r\rightarrow\infty\label{S9E7}%
\end{equation}
as well as similar bounds for the derivatives. Two independent solutions of
the homogeneous equation associated to (\ref{S9E6}) are given by:
\begin{equation}
F_{1,h}\left(  r\right)  =\left(  L+1\right)  r^{2}+\left(  L-1\right)  ,
\quad F_{2,h}\left(  r\right)  =r^{2L}\left[  \left(  L-1\right)
r^{2}+\left(  L+1\right)  \right]  . \label{Z4E0}%
\end{equation}
We then look for solutions of (\ref{S9E6}) with the form:
\begin{equation}
F_{\beta} \left(  r\right)  =a_{1}\left(  r\right)  F_{1,h}\left(  r\right)
+a_{2}\left(  r\right)  F_{2,h}\left(  r\right) 
\label{Z4E0a}
\end{equation}
under the constraints on $a_{1}$ and $a_{2}$:
\begin{align*}
a_{1}^{\prime}\left(  r\right)  F_{1,h}\left(  r\right)  +a_{2}^{\prime
}\left(  r\right)  F_{2,h}\left(  r\right)   &  =0,\\
a_{1}^{\prime}\left(  r\right)  F_{1,h}^{\prime}\left(  r\right)
+a_{2}^{\prime}\left(  r\right)  F_{2,h}^{\prime}\left(  r\right)   &
=S\left(  r\right)  .
\end{align*}
We set
\begin{equation}
\Delta_{F}\left(  r\right)  =\left\vert
\begin{array}
[c]{cc}%
F_{1,h}\left(  r\right)  & F_{2,h}\left(  r\right) \\
F_{1,h}^{\prime}\left(  r\right)  & F_{2,h}^{\prime}\left(  r\right)
\end{array}
\right\vert =2\left(  L+1\right)  L\left(  L-1\right)  \left(  r^{2}+1\right)
^{2}r^{2L-1}. \label{Z4E1}%
\end{equation}
Using then Cramer's formula as well as (\ref{Z4E0}) and (\ref{Z4E1}), we
obtain:%
\begin{align}
a_{1}^{\prime}\left(  r\right)   &  =-\frac{F_{2,h}\left(  r\right)  }%
{\Delta_{F}\left(  r\right)  }S\left(  r\right)  =-\frac{r\left[  \left(
L-1\right)  r^{2}+\left(  L+1\right)  \right]  }{2\left(  L+1\right)  L\left(
L-1\right)  \left(  r^{2}+1\right)  ^{2}}S\left(  r\right),
\label{Z4E3}\\
a_{2}^{\prime}\left(  r\right)   &  =\frac{F_{1,h}\left(  r\right)  }%
{\Delta_{F}\left(  r\right)  }S\left(  r\right)  =\frac{\left(  L+1\right)
r^{2}+\left(  L-1\right)  }{2\left(  L+1\right)  L\left(  L-1\right)  \left(
r^{2}+1\right)  ^{2}r^{2L-1}}S\left(  r\right). 
\label{Z4E4}
\end{align}
Notice that $r=0$ is also a regular singular point for (\ref{S9E5}). Then
$G_{\beta}\left(  r\right)  =1+O\left(  r^{2}\right)  $ as $r\rightarrow0$ as
well as similar estimates for the derivatives. We then observe that the function
$S$ in (\ref{S9E6}) satisfies:%
\begin{equation}
\left\vert S\left(  r\right)  \right\vert \leq C\left(  1+r^{\beta_{L}%
-2}\right)  ,\quad r>0. \label{Z4E5}%
\end{equation}
Note that $\beta_{L}\in\left(  2,4\right)$ for each $L=2,3,...$ We can then
obtain a particular solution of (\ref{S9E6}), choosing $a_{1},\ a_{2}$ in
(\ref{Z4E0a}) as:%
\begin{align*}
a_{1}\left(  r\right)   &  =-\frac{1}{2\left(  L+1\right)  L\left(
L-1\right)  }\int_{0}^{r}\frac{\left[  \left(  L-1\right)  \eta^{2}+\left(
L+1\right)  \right]  \eta}{\left(  \eta^{2}+1\right)  ^{2}}S\left(
\eta\right)  d\eta,\\
a_{2}\left(  r\right)   &  =-\frac{1}{2\left(  L+1\right)  L\left(
L-1\right)  }\int_{r}^{\infty}\frac{\left[  \left(  L+1\right)  \eta
^{2}+\left(  L-1\right)  \right]  \eta}{\left(  \eta^{2}+1\right)  ^{2}%
\eta^{2L}}S\left(  \eta\right)  d\eta.
\end{align*}
The convergence of both integrals is a consequence of (\ref{Z4E5}). Notice
that (\ref{S9E7}) as well as the definition of $S$ in (\ref{S9E6}) implies the
asymptotics:
\begin{equation}
S\left(  r\right)  =-\frac{4L^{2}C_{L}r^{\beta_{L}-2}}{2+\sqrt{4+L^{2}}}
+ O\left(  r^{\beta_{L}-4}\right)  
\quad \text{ as }r\rightarrow\infty. 
\label{Z4E6}%
\end{equation}
Then:
\[
\frac{\left[  \left(  L-1\right)  r^{2}+\left(  L+1\right)  \right] r}{\left(  r^{2}+1\right)  ^{2}}S\left(  r\right)  
=-\frac{4\left( L-1\right)  L^{2}C_{L}r^{\beta_{L}-3}}{2+\sqrt{4+L^{2}}}
+O\left(  r^{\beta_{L}-5}\right)  
\quad \text{ as } r\rightarrow\infty,
\]
whence:%
\[
a_{1}\left(  r\right)  =\frac{C_{L}\left[  \left(  2+L\right)  +\sqrt{4+L^{2}%
}\right]  r^{\beta_{L}-2}}{2\left(  L+1\right)  \left(  2+\sqrt{4+L^{2}%
}\right)  }+\int_{0}^{r}W\left(  \eta\right)  d\eta,
\]
with
\[
\left\vert W\left(  r\right)  \right\vert \leq\frac{C}{\left(  1+r^{5-\beta_{L}}\right)  }
\]
for some $C>0.$ Since $\beta_{L}\in\left(  2,4\right)$ it then follows that
$\int_{0}^{\infty}\left\vert W\left(  r\right)  \right\vert dr<\infty.$
Therefore:%
\[
a_{1}\left(  r\right)  -\frac{C_{L}\left[  \left(  2+L\right)  +\sqrt{4+L^{2}%
}\right]  r^{\beta_{L}-2}}{2\left(  L+1\right)  \left(  2+\sqrt{4+L^{2}%
}\right)  } \to \frac{C_{L}\kappa_{L}}{L+1}=\int_{0}^{\infty}W\left(
\eta\right)  d\eta
\]
as $r\rightarrow\infty$ for some $\kappa_{L}\in\mathbb{R}$. We then have:
\begin{equation}
a_{1}\left(  r\right)  =\frac{C_{L}\left[  \left(  2+L\right)  +\sqrt{4+L^{2}%
}\right]  r^{\beta_{L}-2}}{2\left(  L+1\right)  \left(  2+\sqrt{4+L^{2}%
}\right)  }+\frac{C_{L}\kappa_{L}}{L+1}+O\left(  r^{\beta_{\ell}-4}\right)
\quad  \text{ as }r\rightarrow\infty. 
\label{Z4E7}%
\end{equation}
On the other hand, using (\ref{Z4E6}) we obtain:
\[
\frac{\left[  \left(  L+1\right)  r^{2}+\left(  L-1\right)  \right]
r}{\left(  r^{2}+1\right)  ^{2}r^{2L}}S\left(  r\right)  =-\frac{4L^{2}\left(
L+1\right)  C_{L}r^{\beta_{L}-3-2L}}{2+\sqrt{4+L^{2}}}+O\left(  r^{\beta
_{L}-5-2L}\right)
\]
as $r\rightarrow\infty.$ Therefore:%
\begin{equation}
a_{2}\left(  r\right)  =\frac{C_{L}\left[  \left(  2-L\right)  +\sqrt{4+L^{2}%
}\right]  r^{\beta_{L}-2-2L}}{2\left(  L-1\right)  \left(  2+\sqrt{4+L^{2}%
}\right)  }+O\left(  r^{\beta_{L}-4-2L}\right)  \quad\text{ as }%
r\rightarrow\infty. \label{Z4E8}%
\end{equation}
Combining (\ref{S9E7})-(\ref{Z4E0a}), (\ref{Z4E7}) with (\ref{Z4E8}) we arrive
(after some a bit lengthy computation) at:%
\begin{equation}
F_{\beta} \left(  r\right)  =
C_{L}\left[  r^{\beta_{L}}
+\kappa_{L}r^{2}-\frac{2\sqrt{L^{2}+4} + 5}{\sqrt{L^{2}+4}+1}r^{\beta_{L}-2}
+O\left(  1\right)  \right]  \quad\text{ as }r\rightarrow\infty.
\label{Z4E9}
\end{equation}
Using (\ref{S9E7}) and (\ref{Z4E9}) in (\ref{S9E5a}) we obtain (\ref{Z3E10}).
This concludes the proof.
\end{proof}

\begin{proposition}
\label{Auxlem} Let $C_{L}$ and $K_{L}$ with $L \geq2$ be the constants as in
\eqref{Z3E8} and \eqref{Z3E10} respectively.
Then the following identity holds:
\begin{equation}
C_{L} K_{L} = \frac{L}{\sqrt{L^{2}+4}} \label{KCrel}%
\end{equation}
\end{proposition}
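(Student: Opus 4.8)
The identity $C_L K_L = L/\sqrt{L^2+4}$ relates the leading coefficients at infinity of the two solutions $(\psi_3,\omega_3)$ and $(\psi_4,\omega_4)$ of the homogeneous system \eqref{Z3E3}, \eqref{Z3E4}. The natural tool is a \emph{Wronskian-type conserved quantity} for this second-order ($2\times2$) system. First I would identify the right bilinear combination of two solutions $(\psi,\omega)$ and $(\tilde\psi,\tilde\omega)$ whose $r$-derivative, after using the equations, telescopes into a total derivative — concretely, something built from $r\bigl(\psi\,\omega_r' - \psi_r'\,\omega\bigr)$ type terms, suitably weighted to absorb the first-order and zeroth-order coefficients (the $4r/(r^2+1)$ and $16/(r^2+1)^2$ terms in \eqref{Z3E3}). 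Using the change of variables \eqref{S9E5a} to $(F,G)$ is probably cleaner here: $G$ solves the \emph{scalar} second-order ODE \eqref{S9E5}, so its Wronskian has an elementary integrating factor, and $F$ solves \eqref{S9E6} whose homogeneous solutions $F_{1,h},F_{2,h}$ are explicit from \eqref{Z4E0}. I would therefore rephrase the claim as an identity about the constant $C_L$ appearing in $G_\beta$ and in $F_\beta$, exploiting that $C_L$ is \emph{the same} constant governing both the $\psi_3$-growth (via \eqref{Z3E8}) and, through the relation between the two Frobenius solutions at $r=\infty$, the $\psi_4$-decay.

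\textbf{Key steps.} (1) Write down the exact decay solution: $(\psi_4,\omega_4)$ corresponds, under \eqref{S9E5a}, to the pair $(F,G)$ where $G=G_{-\beta}$ is the solution of \eqref{S9E5} decaying like $r^{-\beta_L}$ (the second Frobenius exponent at $\infty$ is $\beta_L - \text{something}$; I must track the two roots of the indicial equation of \eqref{S9E5} at $r=\infty$, which differ by $2\sqrt{L^2+4}$), and $F$ is chosen to match the $r^{-L}$ behavior from \eqref{Z3E9}. (2) Compute a Wronskian invariant $\mathcal W(r)$ for the $G$-equation \eqref{S9E5} between $G_\beta$ (growing, coefficient $C_L$) and $G_{-\beta}$ (decaying); the integrating factor is $\exp\!\big(-\!\int(\tfrac{1-2L}{r}-\tfrac{8r}{r^2+1})\,dr\big)=r^{2L-1}(r^2+1)^{-4}$ (up to a constant), so $\mathcal W(r) = r^{2L-1}(r^2+1)^{-4}\bigl(G_\beta G_{-\beta}' - G_\beta' G_{-\beta}\bigr)$ is constant in $r$. (3) Evaluate $\mathcal W$ at $r\to\infty$ using the asymptotics \eqref{S9E7} for $G_\beta$ and the analogous expansion for $G_{-\beta}$ (both available from Frobenius theory): this gives $\mathcal W_\infty$ as an explicit multiple of $C_L \cdot (\text{coeff of }G_{-\beta})$ times the difference of exponents $2\sqrt{L^2+4}$. (4) Evaluate $\mathcal W$ at $r\to 0$, where $G_\beta(0)=1$ and $G_{-\beta}$ has the other indicial behavior at $0$ — here I pin down the normalization of $G_{-\beta}$ forced by the condition $\omega_4\sim -r^{-L}$ from \eqref{Z3E9}. (5) Equate the two evaluations of the constant $\mathcal W$, translate back through \eqref{S9E5a} to recover the $\psi_3,\psi_4$ coefficients $K_L$ (from \eqref{Z3E8}) and $C_L$ (from \eqref{Z3E10}), and read off \eqref{KCrel}. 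A quick consistency check: both sides are positive for $L\ge2$, matching the remark that $C_L,K_L>0$, and the $L/\sqrt{L^2+4}$ shape is exactly what a ratio-of-indicial-data computation produces.

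\textbf{Main obstacle.} The bookkeeping of normalization constants is where the real work lies: the constants $K_L$ and $C_L$ are defined through the \emph{original} variables $(\psi,\omega)$ in \eqref{Z3E8} and \eqref{Z3E10}, but the clean conserved quantity lives in the $(F,G)$ variables, and the dictionary \eqref{S9E5a} mixes $F$ and $G$ with $r$-dependent prefactors $8r^{-L}(r^2+1)^{-3}$ and $r^{-L}(r^2+1)^{-1}$. I expect the hard part to be tracking exactly which linear combination of $F$ and $G$ gives $(\psi_3,\omega_3)$ versus $(\psi_4,\omega_4)$ and with what constant, since at $r=\infty$ the leading terms of $\psi$ and $\omega$ come from $F+G$ and $F-G$ respectively, so a single Wronskian pairing secretly encodes \emph{both} the $\psi$- and $\omega$-coefficients. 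Once the pairing is set up correctly, steps (3)–(5) are routine substitutions of the known expansions, and the factor $L/\sqrt{L^2+4}$ emerges from the ratio of indicial roots at $\infty$ (spacing $2\sqrt{L^2+4}$) against the analogous data at $r=0$ (spacing $2L$).
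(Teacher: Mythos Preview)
Your approach is viable and will produce the identity, but it differs from the paper's argument. The paper works directly with the full $4\times4$ Wronskian
\[
\Delta_L(r)=\det\begin{pmatrix}\psi_1&\psi_2&\psi_3&\psi_4\\ \omega_1&\omega_2&\omega_3&\omega_4\\ \psi_1'&\psi_2'&\psi_3'&\psi_4'\\ \omega_1'&\omega_2'&\omega_3'&\omega_4'\end{pmatrix},
\]
derives the ODE $\Delta_L'=-\bigl(\tfrac{2}{r}+\tfrac{4r}{r^2+1}\bigr)\Delta_L$ straight from \eqref{Z3E3}--\eqref{Z3E4}, integrates it to $\Delta_L(r)=E_L\,r^{-2}(r^2+1)^{-2}$, and then evaluates $E_L$ twice: once from the $r\to0$ asymptotics \eqref{Z3E5}--\eqref{Z3E9} (giving $E_L=-2^{10}(L-1)L^2(L+1)$) and once from the $r\to\infty$ asymptotics \eqref{Z3E5}, \eqref{Z3E6}, \eqref{Z3E8}, \eqref{Z3E10} (giving $E_L=-2^{10}(L-1)L(L+1)\sqrt{L^2+4}\,C_LK_L$). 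Equating the two yields \eqref{KCrel} immediately, with no change of variables at all.

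Your route through the scalar $G$-equation \eqref{S9E5} is more elementary in that it replaces a $4\times4$ determinant by a single Abel--Wronskian for a second-order ODE, and your final heuristic is exactly right: the ratio $L/\sqrt{L^2+4}$ is precisely the ratio of the indicial spacings $2L$ at $r=0$ and $2\sqrt{L^2+4}$ at $r=\infty$. Concretely, once you identify the $G$-component of $(\psi_3,\omega_3)$ as the solution $G_{(2L)}\sim r^{2L}$ at the origin with $G_{(2L)}\sim K_Lr^{L+4+\sqrt{L^2+4}}$ at infinity (this follows from \eqref{S9E5a} and \eqref{Z3E7}--\eqref{Z3E8}), the Abel relation $G_\beta G_{(2L)}'-G_\beta'G_{(2L)}=\text{const}\cdot r^{2L-1}(r^2+1)^4$ gives the result in two lines. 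The trade-off is that the paper's $4\times4$ method avoids the normalization bookkeeping you flag as the main obstacle, at the cost of a larger determinant. One small slip: your integrating factor has the exponent of $r$ reversed; the Wronskian of \eqref{S9E5} is proportional to $r^{2L-1}(r^2+1)^{4}$, so the conserved combination is $r^{1-2L}(r^2+1)^{-4}\bigl(G_1G_2'-G_1'G_2\bigr)$.
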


\begin{proof}
Set
\[
\mathcal{M}\left(  r\right)  =\left(
\begin{array}
[c]{cccc}%
\psi_{1}\left(  r\right)  & \psi_{2}\left(  r\right)  & \psi_{3}\left(
r\right)  & \psi_{4}\left(  r\right) \\
\omega_{1}\left(  r\right)  & \omega_{2}\left(  r\right)  & \omega_{3}\left(
r\right)  & \omega_{4}\left(  r\right) \\
\psi_{1}^{\prime}\left(  r\right)  & \psi_{2}^{\prime}\left(  r\right)  &
\psi_{3}^{\prime}\left(  r\right)  & \psi_{4}^{\prime}\left(  r\right) \\
\omega_{1}^{\prime}\left(  r\right)  & \omega_{2}^{\prime}\left(  r\right)  &
\omega_{3}^{\prime}\left(  r\right)  & \omega_{4}^{\prime}\left(  r\right)
\end{array}
\right)  , \qquad\Delta_{L}\left(  r\right)  =\det\left(  \mathcal{M}\left(
r\right)  \right)  .
\]
Using the system \eqref{Z3E3}, \eqref{Z3E4} we obtain:%
\[
\frac{d\Delta_{L}}{dr}\left(  r\right)  =-\left[  \frac{2}{r}+\frac{4r}%
{r^{2}+1}\right]  \Delta_{L}\left(  r\right)  .
\]
The solution of this equation is given by:
\[
\Delta_{L}\left(  r\right)  =\frac{E_{L}}{r^{2}\left(  r^{2}+1\right)  ^{2}}%
\]
for some $E_{L}\in\mathbb{R}$. On the other hand, using (\ref{Z3E5}),
(\ref{Z3E6}), (\ref{Z3E7}), and (\ref{Z3E9}) we obtain:%
\[
\Delta_{L}\left(  r\right)  =-\frac{2^{10}\left(  L+1\right)  \left(
L-1\right)  L^{2}}{r^{2}}\left(  1+o\left(  1\right)  \right)  \quad\text{ as
}r\rightarrow0.
\]
Therefore $E_{L}=-2^{10}\left(  L+1\right)  \left(  L-1\right)  L^{2}$ and
\begin{equation}
\Delta_{L} = -\frac{2^{10}\left(  L+1\right)  \left(  L-1\right)  L^{2}}%
{r^{2}\left(  r^{2}+1\right)  ^{2}}. \label{Delta-L}%
\end{equation}
On the other hand the asymptotics of $\psi_{i}$, $\omega_{i}$, $i=1,2,3,4$,
given in (\ref{Z3E5}), (\ref{Z3E6}), (\ref{Z3E8}), and (\ref{Z3E10}) imply:%
\[
\Delta_{L}\left(  r\right)  =-2^{10}L\left(  L-1\right)  \left(  L+1\right)
\sqrt{L^{2}+4}K_{L}C_{L}r^{-6}\left(  1+o\left(  1\right)  \right)
\quad\text{ as }r\rightarrow\infty,
\]
whence:%
\[
E_{L}=-2^{10}\left(  L+1\right)  \left(  L-1\right)  L^{2}=-2^{10}L\left(
L-1\right)  \left(  L+1\right)  \sqrt{L^{2}+4} C_{L} K_{L}
\]
and the result follows.
\end{proof}

\subsubsection{\label{SolutionU422} Asymptotics of $\left(  Q_{4,2,2},\ V_{4,2,2} \right).$}

\begin{lemma}\label{Lem80} 
For any fixed $\tau,$ the problem \eqref{Z2E10}-\eqref{Z2E11}-\eqref{Z2E12} 
has a one-dimensional family of solutions, parameterized by
$B_{4,2}=B_{4,2}\left(  \tau\right)$. 
Its asymptotics as $r\rightarrow \infty$ is given by:
\begin{subequations}
\begin{align}
Q_{4,2,2}\left(  r,\tau\right)   &  =16K_{2}B_{4,2}r^{2\sqrt{2}-2}
-2\sqrt{2}C_{2} K_{2} B_{2,3}\left(  2\varepsilon_{\ell}\varepsilon_{\ell,\tau
}-\varepsilon_{\ell}^{2}+\frac{\left(  B_{2,3}\right)  _{\tau}}{B_{2,3}%
}\varepsilon_{\ell}^{2}\right)  
+O\left(  \frac{\varepsilon_{\ell}^{4}}{r^{4-2\sqrt{2}}}\right)  \ \ \left(  w.l.a\right),
\label{Q422}\\
V_{4,2,2}\left(  r,\tau\right)   &  =-4B_{4,3}K_{2}r^{2\sqrt{2}}
+\frac{ C_{2} K_{2} B_{2,3} }{2}\left(  2\varepsilon_{\ell}\varepsilon_{\ell,\tau
}-\varepsilon_{\ell}^{2}+\frac{\left(  B_{2,3}\right)  _{\tau}}{B_{2,3}%
}\varepsilon_{\ell}^{2}\right)  r^{2}\log r  +...\ \ \left(
w.l.a\right)  \label{Z6E10b}%
\end{align}
as $r\rightarrow\infty$,
where $C_2$ and $K_2$ are the constants as in Theorem \ref{Linear}
and $B_{2,3} = B_{2,3}(\tau )$ is the parameter in \eqref{M2E2}.
\end{subequations}
\end{lemma}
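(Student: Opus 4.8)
The plan is to regard the system \eqref{Z2E10}--\eqref{Z2E11} as an inhomogeneous linear system whose homogeneous part is precisely \eqref{Z3E3}--\eqref{Z3E4} with $L=2$, and to solve it by variation of parameters using the four fundamental solutions $\{(\psi_{k},\omega_{k}):k=1,2,3,4\}$ supplied by Theorem \ref{Linear}. Writing $(Q_{4,2,2},V_{4,2,2})=\sum_{k=1}^{4}c_{k}(r)(\psi_{k},\omega_{k})$ and imposing the standard requirement that the $c_{k}'$ not contribute to $\partial_{r}Q_{4,2,2}$, $\partial_{r}V_{4,2,2}$, Cramer's rule expresses each $c_{k}'(r)$ as the product of $G_{2}(r)$ with an appropriate cofactor of the fundamental matrix divided by its determinant, which is exactly the quantity $\Delta_{2}(r)=-2^{12}\cdot 3\,[r^{2}(r^{2}+1)^{2}]^{-1}$ computed in the proof of Proposition \ref{Auxlem}; the source $G_{2}$ is the explicit function \eqref{S8E4}. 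Each $c_{k}$ is then obtained by integrating $c_{k}'$ from a base point.

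Second, I would fix the base points. Since $(\psi_{2},\omega_{2})$ and $(\psi_{4},\omega_{4})$ blow up like $r^{-2}$ at the origin while $(\psi_{1},\omega_{1})$ and $(\psi_{3},\omega_{3})$ vanish there like $r^{2}$, the boundary condition \eqref{Z2E12} forces the base point of both $c_{2}$ and $c_{4}$ to be $r=0$; convergence of these integrals near $0$ follows from $G_{2}=O(r^{2})$ as $r\to 0$, which one reads off from \eqref{S8E4} together with $g_{1}=O(r^{4})$ near the origin. The base points of $c_{1}$ and $c_{3}$ stay free. The freedom carried by $(\psi_{1},\omega_{1})$ is merely a higher-order renormalization of $B_{2,3}$, because $(Q_{2,3},V_{2,3})=B_{2,3}(\psi_{1},\omega_{1})$ by \eqref{M2E2} while $B_{2,3}\approx\varepsilon_{\ell}^{2}$ and the present correction is $\approx\varepsilon_{\ell}^{4}$; discarding it, the sole remaining parameter is the coefficient of $(\psi_{3},\omega_{3})$, which we denote $B_{4,2}(\tau)$. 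This yields the claimed one-parameter family.

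Third comes the behaviour as $r\to\infty$. From \eqref{S6E8d}, \eqref{S6E9b} and \eqref{S6E10a1}--\eqref{S6E10b1} one gets $g_{1}\sim(2\varepsilon_{\ell}\varepsilon_{\ell,\tau}-\varepsilon_{\ell}^{2})(2\log r-2)$ and $\partial_{r}g_{1}\sim 2(2\varepsilon_{\ell}\varepsilon_{\ell,\tau}-\varepsilon_{\ell}^{2})/r$, so that the four terms of \eqref{S8E4} combine, with logarithmic accuracy, into
\[
G_{2}(r,\tau)\sim-\frac{8B_{2,3}}{r^{2}}\left(2\varepsilon_{\ell}\varepsilon_{\ell,\tau}-\varepsilon_{\ell}^{2}+\frac{(B_{2,3})_{\tau}}{B_{2,3}}\varepsilon_{\ell}^{2}\right).
\]
Feeding this, together with the large-$r$ asymptotics of $(\psi_{k},\omega_{k})$ from Theorem \ref{Linear}, into the integrals defining the $c_{k}$, the homogeneous mode $(\psi_{3},\omega_{3})$ contributes the leading terms $16K_{2}B_{4,2}r^{2\sqrt2-2}$ to $Q_{4,2,2}$ and $-4K_{2}B_{4,2}r^{2\sqrt2}$ to $V_{4,2,2}$, the particular solution supplies the remaining terms, and the constant part of $Q_{4,2,2}$ is a resonant source for \eqref{Z2E11} at $L=2$, which is what produces the $r^{2}\log r$ growth of $V_{4,2,2}$. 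Rewriting the resulting coefficients by means of the identity $C_{2}K_{2}=2/\sqrt{8}$ from Proposition \ref{Auxlem} puts them in the form \eqref{Q422}, \eqref{Z6E10b}, and all subdominant and purely logarithmic contributions are absorbed into the $(w.l.a)$ remainders.

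The hard part is this last step. Since the exponent $\sqrt{4+L^{2}}=2\sqrt2$ occurs simultaneously in several of the fundamental solutions, the variation-of-parameters integrals display near-cancellations, and obtaining the correct coefficient of the $r^{2}\log r$ term in $V_{4,2,2}$ requires the refined error estimate for $\omega_{4}$ recorded in \eqref{Z3E10}---precisely the extra precision over \cite{V1} built into Theorem \ref{Linear}---together with the algebraic identity \eqref{KCrel}. Keeping track of which integrals converge at infinity and which generate growth, and carrying out the $(w.l.a)$ bookkeeping consistently, is where essentially all the work lies; the remainder is routine.
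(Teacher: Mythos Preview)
Your proposal is correct and follows essentially the same route as the paper: variation of parameters on the coupled system \eqref{Z2E10}--\eqref{Z2E11} using the four fundamental solutions of Theorem~\ref{Linear} with $L=2$, Cramer's rule with the Wronskian $\Delta_{2}$ from Proposition~\ref{Auxlem}, the boundary condition \eqref{Z2E12} forcing the integration constants of the singular modes $(\psi_{2},\omega_{2})$ and $(\psi_{4},\omega_{4})$ to vanish, and then the large-$r$ asymptotics of $G_{2}$ together with the refined expansion \eqref{Z3E10} of $\omega_{4}$ to extract the leading terms and handle the cancellations. Your remark that the residual freedom along $(\psi_{1},\omega_{1})$ is a higher-order shift of $B_{2,3}$ (hence not a genuine new parameter) is a point the paper leaves implicit, so your explanation of why the family is exactly one-dimensional is in fact slightly cleaner than the paper's.
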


\begin{proof}
Although the functions $Q_{4,2,2}$ and $V_{4,2,2}$ depend on $\tau$ through
the dependence on $\tau$ of the source term $G_{2}=G_{2}(r,\tau)$
we do not write them explicitly in the proof, 
because the proof relies purely on standard ODE 
arguments and the dependence on $\tau$ does not play any role. 

We look for solutions with the form:%
\[
Q_{4,2,2}\left(  r\right)  =\sum_{i=1}^{4}b_{i}\left(  r\right)  \psi
_{i}\left(  r\right)  ,\qquad 
V_{4,2,2}\left(  r\right)  =\sum_{i=1}^{4}%
b_{i}\left(  r\right)  \omega_{i}\left(  r\right)  ,
\]
where the functions $\left(  \psi_{i},\omega_{i}\right)  $ are as in Theorem
\ref{Linear} with $L=2.$ We impose the constraints:%
\[
\sum_{i=1}^{4}b_{i}^{\prime}\left(  r\right)  \psi_{i}\left(  r\right)
=0,\qquad\sum_{i=1}^{4}b_{i}^{\prime}\left(  r\right)  \omega_{i}\left(
r\right)  =0.
\]
Then:
\begin{equation}
Q_{4,2,2}^{\prime}\left(  r\right)  =\sum_{i=1}^{4}b_{i}\left(  r\right)
\psi_{i}^{\prime}\left(  r\right)  ,\qquad V_{4,2,2}^{\prime}\left(  r\right)
=\sum_{i=1}^{4}b_{i}\left(  r\right)  \omega_{i}^{\prime}\left(  r\right)
\label{Z5E1}%
\end{equation}
and using \eqref{Z2E10}, \eqref{Z2E11} as well as the fact that the functions
$\left(  \psi_{i},\omega_{i}\right)  $ solve the homogeneous system
\eqref{Z3E3}, \eqref{Z3E4} we obtain:%
\begin{equation}
\sum_{i=1}^{4}b_{i}^{\prime}\left(  r\right)  \psi_{i}^{\prime}\left(
r\right)  +G_{2}\left(  r\right)  =0,
\qquad
\sum_{i=1}^{4}b_{i}^{\prime}\left( r\right)  \omega_{i}^{\prime}\left(  r\right)  =0. 
\label{Z5E2}
\end{equation}
We can rewrite (\ref{Z5E1}), (\ref{Z5E2}) in the vector form:
\begin{equation}
\mathcal{M}\left(  r\right)  \frac{d\mathcal{B}\left(  r\right)  }%
{dr}=\mathcal{S}\left(  r\right)  , \label{S8E9}%
\end{equation}
where
\[
\mathcal{B}\left(  r\right)  =\left(
\begin{array}
[c]{c}%
b_{1}\left(  r\right) \\
b_{2}\left(  r\right) \\
b_{3}\left(  r\right) \\
b_{4}\left(  r\right)
\end{array}
\right)  ,\quad\mathcal{S}\left(  r\right)  =\left(
\begin{array}
[c]{c}%
0\\
0\\
-G_{2}\left(  r\right) \\
0
\end{array}
\right)  ,
\]
and \text{$\mathcal{M}\left(  r\right)  $} is the matrix appeared in the proof
of Lemma \ref{Auxlem}.
We denote as $\mathcal{M}_{k}\left(  r;\mathcal{S}\left(  r\right)  \right)  $
the matrix obtained by replacing the $k-$column of $\mathcal{M}\left(
r\right)  $ by $\mathcal{S}\left(  r\right)  $. Cramer's formula then yields:
\begin{equation}
b_{k}^{\prime}\left(  r\right)  =\frac{\det\left(  \mathcal{M}_{k} \left(  r
;\mathcal{S}\left(  r\right)  \right)  \right)  }{\Delta_{2}\left(  r\right)
}, \quad k=1,2,3,4, \label{Z5E6}%
\end{equation}
where $\Delta_{2}(r)$ is the Wronskian given in \eqref{Delta-L} with $L=2$. In
order to avoid lengthy formulas we will use the following notation. We will
denote as $D_{2,m},$ $m=1,2,3,4$, the following determinant:%
\begin{equation}
D_{2,m}\left(  r\right)  =\left\vert
\begin{array}
[c]{ccc}%
\psi_{i}\left(  r\right)  & \psi_{j}\left(  r\right)  & \psi_{k}\left(
r\right) \\
\omega_{i}\left(  r\right)  & \omega_{j}\left(  r\right)  & \omega_{k}\left(
r\right) \\
\omega_{i}^{\prime}\left(  r\right)  & \omega_{j}^{\prime}\left(  r\right)  &
\omega_{k}^{\prime}\left(  r\right)
\end{array}
\right\vert ,\quad i,j,k\in\left\{  1,2,3,4\right\}  \setminus\left\{
m\right\}  ,\quad i<j<k. \label{Z5E7}%
\end{equation}
Then:
\begin{equation}
b_{m}^{\prime}\left(  r\right)  =\frac{\left(  -1\right)  ^{m+1}G_{2}\left(
r\right)  D_{2,m}\left(  r\right)  r^{2}\left(  r^{2}+1\right)  ^{2}}%
{3\cdot2^{12}},\quad m=1,2,3,4. \label{Z5E8}%
\end{equation}

Our next goal is to compute the asymptotics of the functions $b_{m}^{\prime
}\left(  r\right)  $ as $r\rightarrow0.$ To this end we first compute the
asymptotics of $D_{2,m}\left(  r\right)  $ and $G_{2}\left(  r\right)  .$
Using (\ref{Z3E5}), (\ref{Z3E6}), (\ref{Z3E7}), (\ref{Z3E9}) in Theorem
\ref{Linear} with $L=2$ we obtain:%
\begin{subequations}
\begin{align}
D_{2,1}\left(  r\right)   &  =-\frac{2^{6}}{r^{3}}\left(  1+O\left(  r\right) \right),
\label{Z5E9a}\\
D_{2,2}\left(  r\right)   &  =-3\cdot2^{6}\cdot r\left(  1+O\left(  r\right) \right),
\label{Z5E9b}\\
D_{2,3}\left(  r\right)   &  =-\frac{3\cdot2^{6}}{r^{3}}\left(  1+O\left( r\right)  \right),
\label{Z5E9c}\\
D_{2,4}\left(  r\right)   &  =-3\cdot2^{6}\cdot r\left(  1+O\left(  r\right) \right)  
\label{Z5E9d}
\end{align}
as $r \to 0$.
On the other hand, (\ref{S6E9b}) and (\ref{S8E4}) imply:%
\end{subequations}
\begin{equation}
G_{2}\left(  r\right)  =3\cdot2^{3}\left(  3\left(  2\varepsilon_{\ell
}\varepsilon_{\ell,\tau}-\varepsilon_{\ell}^{2}\right)  B_{2,3}
-\left( B_{2,3}\right)_{\tau}\varepsilon_{\ell}^{2}\right)  r^{2}\left(  1+O\left(
r\right)  \right)
\label{Z5E10}%
\end{equation}
as $r \to 0$.
Combining (\ref{Z5E8})-(\ref{Z5E10}) we obtain:%
\begin{subequations}
\begin{align}
&  b_{1}^{\prime}\left(  r\right)  =-\frac{r}{2^{3}}\left(  3\left(
2\varepsilon_{\ell}\varepsilon_{\ell,\tau}-\varepsilon_{\ell}^{2}\right)
B_{2,3}-\left( B_{2,3}\right)_{\tau}
\varepsilon_{\ell}^{2}\right)  \left(  1+O\left(  r\right)  \right)
,\label{S9E1}\\
&  b_{2}^{\prime}\left(  r\right)  =\frac{3r^{5}}{2^{3}}\left(  3\left(
2\varepsilon_{\ell}\varepsilon_{\ell,\tau}-\varepsilon_{\ell}^{2}\right)
B_{2,3}-\left( B_{2,3}\right)_{\tau}
\varepsilon_{\ell}^{2}\right)  \left(  1+O\left(  r\right)  \right)
,\label{S9E2}\\
&  b_{3}^{\prime}\left(  r\right)  =-\frac{3r}{2^{3}}\left(  3\left(
2\varepsilon_{\ell}\varepsilon_{\ell,\tau}-\varepsilon_{\ell}^{2}\right)
B_{2,3}-\left( B_{2,3}\right)_{\tau}
\varepsilon_{\ell}^{2}\right)  \left(  1+O\left(  r\right)  \right)
,\label{S9E3}\\
&  b_{4}^{\prime}\left(  r\right)  =\frac{3r^{5}}{2^{3}}\left(  3\left(
2\varepsilon_{\ell}\varepsilon_{\ell,\tau}-\varepsilon_{\ell}^{2}\right)
B_{2,3}-\left( B_{2,3}\right)_{\tau}
\varepsilon_{\ell}^{2}\right)  \left(  1+O\left(  r\right)  \right)
\label{S9E4}%
\end{align}
as $r\rightarrow0$. Integrating the equations \eqref{S9E1}-\eqref{S9E4} and
using (\ref{Z2E12}) we obtain:%
\end{subequations}
\begin{subequations}
\begin{align}
b_{1}\left(  r\right)   &  =\beta_{1}-\frac{ 3\left(  2\varepsilon
_{\ell}\varepsilon_{\ell,\tau}-\varepsilon_{\ell}^{2}\right)  B_{2,3}
-\left( B_{2,3}\right)_{\tau}  \varepsilon_{\ell}^{2}}{2^{4}}r^{2}
\left(  1+O\left(  r\right)  \right)
,\label{Z6E1a}\\
b_{2}\left(  r\right)   &  =\frac{3\left(  2\varepsilon_{\ell
}\varepsilon_{\ell,\tau}-\varepsilon_{\ell}^{2}\right)  B_{2,3}
-\left( B_{2,3}\right)_{\tau}  \varepsilon_{\ell}^{2}}{2^{4}}r^{6}
\left(  1+O\left(  r\right)  \right)  ,\label{Z6E1b}\\
b_{3}\left(  r\right)   &  =\beta_{3}-\frac{3\left(  3\left(  2\varepsilon
_{\ell}\varepsilon_{\ell,\tau}-\varepsilon_{\ell}^{2}\right)  B_{2,3}%
-\left( B_{2,3}\right)_{\tau} \varepsilon_{\ell}^{2}\right)}{2^{4}}r^{2}
\left(  1+O\left(  r\right)  \right)
,\label{Z6E1c}\\
b_{4}\left(  r\right)   &  =\frac{3\left(  2\varepsilon_{\ell
}\varepsilon_{\ell,\tau}-\varepsilon_{\ell}^{2}\right)  B_{2,3}-
\left( B_{2,3}\right)_{\tau} \varepsilon_{\ell}^{2}}
{2^{4}}r^{6}\left(  1+O\left(  r\right)  \right)  \label{Z6E1d}%
\end{align}
as $r\rightarrow0$ for some $\beta_{1},\beta_{3}\in\mathbb{R}$ to be
determined. Notice that we can then compute the functions $b_{m}$ by means of:%
\end{subequations}
\begin{subequations}
\begin{align}
b_{m}\left(  r\right)   &  =\beta_{m}+\frac{\left(  -1\right)  ^{m+1}}%
{3\cdot2^{12}}\int_{0}^{r}G_{2}\left(  \eta\right)  \eta^{2}\left(  \eta
^{2}+1\right)  ^{2}D_{2,m}\left(  \eta\right)  d\eta,\quad
m=1,2,3,4;\label{Z6E2}\\
\beta_{2}  &  =\beta_{4}=0. \label{Z6E3}%
\end{align}

We now proceed to compute the asymptotics of the terms $b_{m}\left( r\right)$ as $r\rightarrow\infty$. 
To this end we need to derive the asymptotics of the determinants $D_{2,m}.$
We begin with $D_{2,1}.$ Expanding with respect to the first row of the
determinant in (\ref{Z5E7}) we obtain:%
\end{subequations}
\begin{equation}
D_{2,1}\left(  r\right)  =\psi_{2}\left(  r\right)  \left\vert
\begin{array}
[c]{cc}%
\omega_{3}\left(  r\right)  & \omega_{4}\left(  r\right) \\
\omega_{3}^{\prime}\left(  r\right)  & \omega_{4}^{\prime}\left(  r\right)
\end{array}
\right\vert -\psi_{3}\left(  r\right)  \left\vert
\begin{array}
[c]{cc}%
\omega_{2}\left(  r\right)  & \omega_{4}\left(  r\right) \\
\omega_{2}^{\prime}\left(  r\right)  & \omega_{4}^{\prime}\left(  r\right)
\end{array}
\right\vert +\psi_{4}\left(  r\right)  \left\vert
\begin{array}
[c]{cc}%
\omega_{2}\left(  r\right)  & \omega_{3}\left(  r\right) \\
\omega_{2}^{\prime}\left(  r\right)  & \omega_{3}^{\prime}\left(  r\right)
\end{array}
\right\vert. 
\label{Z6E4}%
\end{equation}
Using (\ref{Z3E6}), (\ref{Z3E8}), and (\ref{Z3E10}) we obtain:
\begin{align}
\psi_{2}\left(  r\right)  \left\vert
\begin{array}
[c]{cc}%
\omega_{3}\left(  r\right)  & \omega_{4}\left(  r\right) \\
\omega_{3}^{\prime}\left(  r\right)  & \omega_{4}^{\prime}\left(  r\right)
\end{array}
\right\vert  &  =O\left(  r^{2\sqrt{2}-9}\right)  ,\label{Z6E5a}\\
\psi_{4}\left(  r\right)  \left\vert
\begin{array}
[c]{cc}%
\omega_{2}\left(  r\right)  & \omega_{3}\left(  r\right) \\
\omega_{2}^{\prime}\left(  r\right)  & \omega_{3}^{\prime}\left(  r\right)
\end{array}
\right\vert  &  =-3\cdot2^{7}K_{2}C_{2}r^{-5}\left(  \sqrt{2}+1\right)
+o\left(  r^{-5}\right)  \label{Z6E5b}%
\end{align}
as $r\rightarrow\infty$.
Using the asymptotics (\ref{Z3E6}) and (\ref{Z3E10}) we obtain:
\begin{equation}
\omega_{2}\left(  r\right)  \omega_{4}^{\prime}\left(  r\right)  -\omega
_{2}^{\prime}\left(  r\right)  \omega_{4}\left(  r\right)  =2^{3}\cdot
3(\sqrt{2}-1)C_{2}r^{-2\sqrt{2}-3}\left(  1+o\left(  1\right)  \right)
\label{Z6E5d}%
\end{equation}
as $r\rightarrow\infty$, whence, using (\ref{Z6E4})-(\ref{Z6E5d}) and taking into account that $2\sqrt{2}<4,$ we conclude:%
\begin{subequations}
\begin{equation}
D_{2,1}\left(  r\right)  = - 2^{8}\cdot 3C_{2} K_{2} r^{-5}\left(  1+o\left( 1\right)  \right)  
\quad\text{ as }r\rightarrow\infty. 
\label{Z6E6}%
\end{equation}
The asymptotics of $D_{2,m}\left(  r\right)  ,\ m=2,3,4,$ can be computed by a
similar manner.
The following asymptotics then follow:
\begin{align}
D_{2,2}\left(  r\right)   &  = 2^{6} C_{2}K_{2}\kappa_{2} r^{2\sqrt{2}-3}\left(
1+o\left(  1\right)  \right)  \quad\text{ as }r\rightarrow\infty
,\label{Z6E7a}\\
D_{2,3}\left(  r\right)   &  =-2^{6}\cdot3C_{2}r^{-2\sqrt{2}-3}\left(
1+o\left(  1\right)  \right)  \quad\text{ as }r\rightarrow\infty
,\label{Z6E7b}\\
D_{2,4}\left(  r\right)   &  =-2^{6}\cdot3K_{2}r^{2\sqrt{2}-3}\left(
1+o\left(  1\right)  \right)  \quad\text{ as }r\rightarrow\infty.
\label{Z6E7c}%
\end{align}
Our next goal is to compute the asymptotics of the functions $b_{m}$ for large
$r.$ To this end, we first compute the asymptotics of $G_{2}\left( r\right)$ 
as $r\rightarrow\infty.$ Using (\ref{S8E4}) as well as
(\ref{S6E9b}) we obtain:%
\end{subequations}
\begin{equation}
G_{2}\left(  r\right)  \sim-\frac{8B_{2,3}}{r^{2}}\left(  \left(
2\varepsilon_{\ell}\varepsilon_{\ell,\tau}-\varepsilon_{\ell}^{2}\right)
+\frac{\left(  B_{2,3}\right)  _{\tau}}{B_{2,3}}\varepsilon_{\ell}^{2}\right)
\quad \text{ as }r\rightarrow\infty. 
\label{Z6E8}%
\end{equation}
Using now (\ref{Z6E6})-(\ref{Z6E7c}) as well as (\ref{Z3E5}), (\ref{Z3E6}),
(\ref{Z3E8}), (\ref{Z3E10}), (\ref{Z6E2}), and (\ref{Z6E3}) we obtain the
following asymptotics:%
\begin{equation}
b_{1}\left(  r\right)  \sim\frac{ C_{2} K_{2} B_{2,3} }{2}\left(  \left(
2\varepsilon_{\ell}\varepsilon_{\ell,\tau}-\varepsilon_{\ell}^{2}\right)
+\frac{\left(  B_{2,3}\right)  _{\tau}}{B_{2,3}}\varepsilon_{\ell}^{2}\right) \log r  
\quad \text{ as }r\rightarrow\infty, 
\label{Z6E9a}
\end{equation}
\begin{equation}
b_{2}\left(  r\right)  \sim\frac{C_{2} K_{2} \kappa_{2} B_{2,3}}{3\cdot
2^{4}\left(  \sqrt{2}+1\right)  }\left(  \left(  2\varepsilon_{\ell
}\varepsilon_{\ell,\tau}-\varepsilon_{\ell}^{2}\right)  +\frac{\left(
B_{2,3}\right)  _{\tau}}{B_{2,3}}\varepsilon_{\ell}^{2}\right)  r^{2\sqrt{2}+2}
\quad \text{ as }r\rightarrow\infty. 
\label{Z6E9b}
\end{equation}
In the case of $b_{3}\left(  r\right)$ we have that 
$\int_{0}^{\infty}\left\vert G_{2}\left(  \eta\right)  \right\vert \eta^{2}\left(  
\eta^{2} + 1 \right)^{2} \left\vert D_{2,1}\left(  \eta\right)  \right\vert d\eta<\infty.$ 
Assuming that $\beta_{3}= O\left( \varepsilon_{\ell}^{4}\right)$ $\left(  w.l.a \right),$ 
as it corresponds to this class of terms we would then obtain:%
\begin{equation}
b_{3}\left(  r\right)  \rightarrow B_{4,2}=\beta_{3}+\frac{1}{3\cdot2^{12}%
}\int_{0}^{\infty}G_{2}\left(  \eta\right)  \eta^{2}\left(  \eta^{2}+1\right)
^{2}D_{2,3}\left(  \eta\right)  d\eta,
\label{Z6E9c}
\end{equation}
where $B_{4,2}=O\left(  \varepsilon_{\ell}^{4}\right)  \ \ \left(
w.l.a\right)$ uniformly for large $r.$ Finally:
\begin{equation}
b_{4}\left(  r\right)  \sim-\frac{K_{2}B_{2,3}}{2^{4}\left(  \sqrt
{2}+1\right)  }\left(  \left(  2\varepsilon_{\ell}\varepsilon_{\ell,\tau
}-\varepsilon_{\ell}^{2}\right)  +\frac{\left(  B_{2,3}\right)  _{\tau}%
}{B_{2,3}}\varepsilon_{\ell}^{2}\right)  r^{2\sqrt{2}+2}
\quad \text{ as } r\rightarrow\infty. 
\label{Z6E9d}%
\end{equation}
We now compute the asymptotics of $Q_{4,2,2}\left(  r\right)  ,\ V_{4,2,2}%
\left(  r\right)  $. We use (\ref{Z3E5}), (\ref{Z3E6}), (\ref{Z3E8}),
(\ref{Z3E10}) combined with (\ref{Z6E9a})-(\ref{Z6E9d}) to obtain the
asymptotics of $V_{4,2,2}$ as in \eqref{Z6E10b} and%
\begin{align}
Q_{4,2,2}\left(  r\right)  =  &  16K_{2}B_{4,2}r^{2\sqrt{2}-2}-\frac
{C_{2} K_{2} B_{2,3}}{\sqrt{2}+1}\left(  \left(
2\varepsilon_{\ell}\varepsilon_{\ell,\tau}-\varepsilon_{\ell}^{2}\right)
+\frac{\left(  B_{2,3}\right)  _{\tau}}{B_{2,3}}\varepsilon_{\ell}^{2}\right)
-\nonumber\\
&  -\frac{\psi_{3}\left(  r\right)  }{3\cdot2^{12}}\int_{r}^{\infty}%
G_{2}\left(  \eta\right)  \eta^{2}\left(  \eta^{2}+1\right)  ^{2}%
D_{2,3}\left(  \eta\right)  d\eta+O\left(  \frac{\varepsilon_{\ell}^{4}%
}{r^{4-2\sqrt{2}}}\right)  \ \ \left(  w.l.a\right)  \ \label{Z6E9e}%
\end{align}
as $r\rightarrow\infty$. 
It is important to remark that in the computation of
the asymptotics of $V_{4,2,2}$ there is a cancellation of the leading order of
$b_{2}\left(  r\right)  \omega_{2}\left(  r\right)  +b_{4}\left(  r\right)
\omega_{4}\left(  r\right)  .$ We now estimate the integral term on the
right-hand side of (\ref{Z6E9e}). The leading order of the integral is then
computed as:%
\[
\int_{r}^{\infty}G_{2}\left(  \eta\right)  \eta^{2}\left(  \eta^{2}+1\right)
^{2}D_{2,3}\left(  \eta\right)  d\eta\sim2^{8}\cdot3C_{2}B_{2,3}\left(
\left(  2\varepsilon_{\ell}\varepsilon_{\ell,\tau}-\varepsilon_{\ell}%
^{2}\right)  +\frac{\left(  B_{2,3}\right)  _{\tau}}{B_{2,3}}\varepsilon
_{\ell}^{2}\right)  \frac{r^{2-2\sqrt{2}}}{\sqrt{2}-1}%
\]
as $r\rightarrow\infty$. 
Combining this formula with (\ref{Z6E9e}) as well as \eqref{Z3E8} we obtain \eqref{Q422}.
\end{proof}

\begin{remark}
It will be seen later in Subsection \ref{Match4} that in the outer region the
terms $K_{2} B_{4,2} r^{2\sqrt{2}}$ would give a contribution for $\Phi$ of
order
$B_{4,2}\varepsilon_{\ell}^{-(2+2\sqrt{2})} \left\vert y - \bar{y}_{\ell} \right\vert
^{2\sqrt{2}}$ in the outer variables.
In order for this term to be smaller than $\varepsilon_{\ell}^{2}$ we
would need $B_{4,2}= O( \varepsilon_{\ell}^{4+2\sqrt{2}})$. 
This implies basically that $B_{4,2}$ is very small in this region.
\end{remark}

\subsection{Computation of $\left(  U_{4,2,3},W_{4,2,3}\right)  .$}

We now compute the functions $U_{4,2,3}, W_{4,2,3}$ which
satisfy \eqref{Z1E7}, \eqref{Z1E8} with $k=3$ together with boundary condition (\ref{Z2E1}),
(\ref{Z2E2}). We can ignore the presence of
homogeneous terms of the equations, because all such terms can be included in
the parameters $B_{2,3},\ \bar{B}_{2,3}$ (cf. (\ref{M2E2})). We can assume
also that the angular dependence of the functions $U_{4,2,3},W_{4,2,3}$ is
$\cos\left(  4\theta\right)  :$
\begin{equation}
U_{4,2,3}=Q_{4,2,3}\left(  r,\tau\right)  \cos\left(  4\theta\right)  ,\quad
W_{4,2,3}=V_{4,2,3}\left(  r,\tau\right)  \cos\left(  4\theta\right)
\label{Z7E1}%
\end{equation}
Plugging (\ref{Z7E1}) into the system \eqref{Z1E7}, \eqref{Z1E8} with $k=3$, we
obtain:
\begin{subequations}
\begin{align}
0  &  =\frac{1}{r}\frac{\partial}{\partial r}\left(  r\frac{\partial
Q_{4,2,3}}{\partial r}\right)  -\frac{16}{r^{2}}Q_{4,2,3}-\frac{du_{s}}{dr}
\frac{\partial V_{4,2,3}}{\partial r}+2u_{s}Q_{4,2,3}-\frac{\partial
Q_{4,2,3}}{\partial r}\frac{dv_{s}}{dr} + G_{3}
,\label{Z7E2}\\
0  &  =\frac{1}{r}\frac{\partial}{\partial r}\left(  r\frac{\partial
V_{4,2,3}}{\partial r}\right)  -\frac{16}{r^{2}}V_{4,2,3}+Q_{4,2,3}.
\label{Z7E3}%
\end{align}
The conditions (\ref{Z2E1}) and (\ref{Z2E2}) respectively imply
\begin{equation}
Q_{4,2,3}(0,\tau)=0,\quad V_{4,2,3}(0,\tau)=0. \label{Z2E23bound}
\end{equation}
\end{subequations}

\begin{lemma}
The problem \eqref{Z7E2}-\eqref{Z7E3}-\eqref{Z2E23bound} has a
two-dimensional family of solutions parametrized by $c_1 (\infty ),\ c_3 (\infty )$.
Moreover, its asymptotics as $r \to\infty$ is given by
\begin{subequations}
\begin{align}
Q_{4,2,3}\left(  r, \tau\right)   &  = 16K_{4}r^{2\sqrt{5}-2}c_{3}\left(
\infty\right)  +\left(  24c_{1}\left(  \infty\right)  + \sqrt{5}C_{4}K_{4}
\left(  B_{2,3}\right)  ^{2} \right)  + o\left(  1\right),
\label{F-Q423}\\
V_{4,2,3}\left(  r, \tau\right)   &  = -4K_{4}c_{3}\left( \infty \right)r^{2\sqrt{5}} + 3c_{1}\left(  \infty\right)  r^{4} +
\left[  2c_{1} \left(  \infty\right)  - \frac{C_{4} K_{4} \left(
B_{2,3}\right)  ^{2} \sqrt{5} }{24} \right]  r^{2} + o\left(  r^{2} \right)
\label{F-V423}
\end{align}
as $r\rightarrow\infty$,
\end{subequations}
where $C_4$ and $K_4$ are the constants as in Theorem \ref{Linear}
and $B_{2,3} = B_{2,3}(\tau )$ is the parameter in \eqref{M2E2}.
\end{lemma}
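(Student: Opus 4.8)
The plan is to mimic exactly the variation-of-parameters argument used in Lemma~\ref{Lem80} for $(Q_{4,2,2},V_{4,2,2})$, now applied to the system \eqref{Z7E2}--\eqref{Z7E3}, which is the inhomogeneous version of \eqref{Z3E3}--\eqref{Z3E4} with $L=4$ and source term $G_3$. First I would record the four homogeneous solutions $(\psi_i,\omega_i)$, $i=1,2,3,4$, from Theorem~\ref{Linear} with $L=4$, noting in particular their behavior: $(\psi_1,\omega_1)$ and $(\psi_3,\omega_3)$ are regular at the origin (with $\psi_1\sim r^4$, $\psi_3\sim r^4$, $\omega_3\sim-r^4$ etc.), while $(\psi_2,\omega_2)$ and $(\psi_4,\omega_4)$ are singular ($\sim r^{-4}$) there. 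Writing $Q_{4,2,3}=\sum_i c_i(r)\psi_i$, $V_{4,2,3}=\sum_i c_i(r)\omega_i$ with the usual Wronskian constraints, Cramer's rule gives $c_m'(r)=(-1)^{m+1}G_3(r)D_{4,m}(r)r^2(r^2+1)^2/(3\cdot2^{12}\,\text{const})$, where $D_{4,m}$ are the $3\times3$ minors built from $\{\psi_i,\omega_i,\omega_i'\}$ exactly as in \eqref{Z5E7}, and the full Wronskian $\Delta_4(r)$ is again given by \eqref{Delta-L} with $L=4$.

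Next I would compute $G_3$ explicitly. By \eqref{Z1E6}, $G_3 = 4Q_{2,3}V_{2,3}/r^2 - (2r)^{-1}\partial_r(rQ_{2,3}\,\partial_r V_{2,3})$, and inserting the closed forms \eqref{M2E2} for $(Q_{2,3},V_{2,3})$ yields a rational function of $r$ with an overall factor $(B_{2,3})^2$; the key asymptotic facts needed are $G_3 = O(r^2)$ as $r\to0$ and $G_3 \sim \text{const}\cdot(B_{2,3})^2 r^{-2}$ as $r\to\infty$ (the same decay rate as $G_2$ in \eqref{Z6E8}, since $Q_{2,3}V_{2,3}/r^2$ and the other term both behave like $r^{-2}$ at infinity). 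Then, paralleling \eqref{Z5E9a}--\eqref{Z5E9d} and \eqref{Z6E6}--\eqref{Z6E7c}, I would extract the small-$r$ and large-$r$ asymptotics of the four determinants $D_{4,m}$: near $r=0$ they scale like $r^{\pm3}$ (times constants depending on $L=4$), and near $r=\infty$ like $r^{2\sqrt5-3}$ or $r^{-2\sqrt5-3}$ (using $\sqrt{4+16}=2\sqrt5$), with coefficients involving $C_4$, $K_4$, $\kappa_4$ via Proposition~\ref{Auxlem} ($C_4K_4 = 4/\sqrt{20} = 2/\sqrt5$). Integrating the $c_m'$ from $0$, and using the boundary conditions \eqref{Z2E23bound} to pin down the free constants $c_1(0)=c_3(0)=0$ (the coefficients of the singular solutions $\psi_2,\psi_4$ forced to vanish as in \eqref{Z6E3}), leaves precisely two free constants of integration, which I would name $c_1(\infty)$ and $c_3(\infty)$ according to the limits $c_1(r)\to c_1(\infty)$, $c_3(r)\to c_3(\infty)$ as $r\to\infty$; this establishes the claimed two-parameter family.

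Finally, to get \eqref{F-Q423}--\eqref{F-V423} I would assemble $Q_{4,2,3}=\sum c_i\psi_i$ and $V_{4,2,3}=\sum c_i\omega_i$ using the large-$r$ asymptotics of $(\psi_i,\omega_i)$ from \eqref{Z3E5}--\eqref{Z3E10} with $L=4$ together with the just-computed limits of the $c_i$. The $c_3(\infty)\psi_3\sim 16K_4 c_3(\infty) r^{2\sqrt5-2}$ and $c_3(\infty)\omega_3\sim -4K_4 c_3(\infty) r^{2\sqrt5}$ terms give the leading growing contributions; the $c_1(\infty)\psi_1$, $c_1(\infty)\omega_1$ terms (with $\psi_1\sim 8\cdot3\cdot r^4/r^6 = 24/r^2\cdot(\text{corrections})$, $\omega_1\sim 3r^4 + 5r^2 + \dots$ for $L=4$, reading off from \eqref{Z3E5}) produce the $24c_1(\infty)$ constant in $Q$ and the $3c_1(\infty)r^4 + 2c_1(\infty)r^2$ in $V$; and the particular-integral pieces involving $\int_r^\infty G_3 D_{4,3}$ contribute the $\sqrt5 C_4 K_4 (B_{2,3})^2$ term in $Q$ and the $-C_4K_4(B_{2,3})^2\sqrt5/24\cdot r^2$ term in $V$, exactly as the analogous integral in \eqref{Z6E9e} produced the $B_{2,3}$-dependent constant there. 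I expect the main obstacle to be bookkeeping: tracking the precise numerical coefficients through the minor expansions and verifying the cancellation of would-be faster-growing terms (the analogue of the cancellation noted after \eqref{Z6E9e} for $b_2\omega_2+b_4\omega_4$), which here must kill any $r^{2\sqrt5-2}$-type remainder in $V_{4,2,3}$ beyond what is displayed and any contribution that would violate the stated $o(r^2)$ error. Identifying which homogeneous solutions' coefficients remain bounded versus which ones the integrability of $G_3 D_{4,m}$ forces to have finite limits is the delicate point, but it is structurally identical to the $L=2$ case already carried out.
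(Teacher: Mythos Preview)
Your approach is exactly the one the paper uses: variation of parameters against the four homogeneous solutions of Theorem~\ref{Linear} with $L=4$, Cramer's rule with the Wronskian $\Delta_4$ from \eqref{Delta-L}, small-$r$ and large-$r$ asymptotics of $G_3$ and the minors $D_{4,m}$, and then assembling $\sum c_i\psi_i$, $\sum c_i\omega_i$ with the cancellation in $c_2\omega_2+c_4\omega_4$ handled just as in the $L=2$ case. One slip to fix: the regularity condition \eqref{Z2E23bound} forces $c_2(0)=c_4(0)=0$ (the coefficients of the \emph{singular} solutions $\psi_2,\psi_4$), not $c_1(0)=c_3(0)=0$; it is precisely $c_1$ and $c_3$ that carry the free constants $\gamma_1,\gamma_3$ (cf.~\eqref{ciasympt1}--\eqref{ciasympt4}), which then become your $c_1(\infty),c_3(\infty)$---also, near the origin $G_3\sim \mathrm{const}\,(B_{2,3})^2 r^4$ (not $r^2$) and the $D_{4,m}$ scale like $r^{-5}$ or $r^{3}$ (not $r^{\pm3}$), since $L=4$ shifts the exponents relative to the $L=2$ computation you are pattern-matching from.
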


\begin{proof}
For the same reason as in the proof of Lemma \ref{Lem80} 
we avoid writing explicitly the dependencies on $\tau$ in the proof unless needed.
The solutions of the homogeneous equations
associated to \eqref{Z7E2}, \eqref{Z7E3} have been described in Theorem
\ref{Linear}. We then look for solutions of \eqref{Z7E2}, \eqref{Z7E3} in the
form:
\[
Q_{4,2,3}\left(  r\right) 
=\sum_{i=1}^{4}c_{i}\left(  r\right)  \psi_{i}\left(  r\right),
\quad 
V_{4,2,3}\left(  r\right)  
=\sum_{i=1}^{4}c_{i}\left(  r\right)  \omega_{i}\left(  r\right)
\]
under the constraint:
\[
\sum_{i=1}^{4}c_{i}^{\prime}\left(  r\right)  \psi_{i}\left(  r\right)
=\sum_{i=1}^{4}c_{i}^{\prime}\left(  r\right)  \omega_{i}\left(  r\right)
=0.
\]
Arguing as in the proof of Lemma \ref{Lem80}, we then obtain:%
\[
c_{k}^{\prime}\left(  r\right)  =\frac{\left(  -1\right)  ^{k}G_{3}\left(
r\right)  D_{4,k}\left(  r\right)  }{\Delta_{4}\left(  r\right)  },
\]
where:%
\[
D_{4,m}\left(  r\right)  =\left\vert
\begin{array}
[c]{ccc}%
\psi_{i}\left(  r\right)  & \psi_{j}\left(  r\right)  & \psi_{k}\left(
r\right) \\
\omega_{i}\left(  r\right)  & \omega_{j}\left(  r\right)  & \omega_{k}\left(
r\right) \\
\omega_{i}^{\prime}\left(  r\right)  & \omega_{j}^{\prime}\left(  r\right)  &
\omega_{k}^{\prime}\left(  r\right)
\end{array}
\right\vert ,\ i,j,k\in\left\{  1,2,3,4\right\}  \setminus\left\{  m\right\}
,\quad i<j<k
\]
and where $\Delta_{4}\left(  r\right)  $ is the Wronskian given in
\eqref{Delta-L} with $L=4$. Hence:
\begin{equation}
c_{m}^{\prime}\left(  r\right)  =\frac{\left(  -1\right)  ^{m+1}r^{2}\left(
r^{2}+1\right)  ^{2}G_{3}\left(  r\right)  D_{4,m}\left(  r\right)  }
{2^{14}\cdot3\cdot5},\quad m=1,2,3,4. 
\label{G1E1}
\end{equation}
By Theorem \ref{Linear} we obtain the following asymptotics:%
\begin{subequations}
\begin{align}
D_{4,1}\left(  r\right)   &  \sim-\frac{2^{7}\cdot3}{r^{5}},\qquad 
D_{4,2}\left(  r\right)  \sim-2^{7}\cdot 5 \cdot r^{3},
\label{G1E2a}\\
D_{4,3}\left(  r\right)   &  \sim-\frac{2^{7} \cdot 3 \cdot 5}{r^{5}},\qquad 
D_{4,4}\left(  r\right)  \sim - 2^{7} \cdot 3 \cdot 5\cdot r^{3}
\label{G1E2b}%
\end{align}
as $r \to 0$.
Using now \eqref{M2E1} and \eqref{Z1E6} we have:
\end{subequations}
\begin{equation}
G_{3}\left(  r\right)  \sim2^{8}\cdot3\cdot\left(  B_{2,3}\right)  ^{2}%
r^{4}\quad\text{ as }r\rightarrow0. \label{G1E3}%
\end{equation}
Combining \eqref{G1E1} with \eqref{G1E2a}, \eqref{G1E2b} we deduce the
asymptotics:%
\begin{align*}
c_{1}^{\prime}\left(  r\right)  & \sim-\frac{6\left(  B_{2,3}\right) ^{2}r}{5},\qquad 
c_{2}^{\prime}\left(  r\right) \sim 2\left(  B_{2,3}\right)^{2}r^{9},\\
c_{3}^{\prime}\left(  r\right)   &  \sim-6\left(  B_{2,3}\right)  ^{2} r,\qquad 
c_{4}^{\prime}\left(  r\right) \sim 6\left(  B_{2,3}\right)^{2}r^{9}
\end{align*}
as $r \to 0$.
Using then the conditions \eqref{Z2E23bound} as well as \eqref{Z3E6} and \eqref{Z3E10}, we arrive at:%
\begin{subequations}
\begin{align}
c_{1}\left(  r\right)   &  =\gamma_{1}+\frac{1}{2^{14}\cdot3\cdot5}\int
_{0}^{r}\xi^{2}\left(  \xi^{2}+1\right)  ^{2}G_{3}\left(  \xi\right)
D_{4,1}\left(  \xi\right)  d\xi,\label{ciasympt1}\\
c_{2}\left(  r\right)   &  =-\frac{1}{2^{14}\cdot3\cdot5}\int_{0}^{r}\xi
^{2}\left(  \xi^{2}+1\right)  ^{2}G_{3}\left(  \xi\right)  D_{4,2}\left(
\xi\right)  d\xi,\label{ciasympt2}\\
c_{3}\left(  r\right)   &  =\gamma_{3}+\frac{1}{2^{14}\cdot3\cdot5}\int
_{0}^{r}\xi^{2}\left(  \xi^{2}+1\right)  ^{2}G_{3}\left(  \xi\right)
D_{4,3}\left(  \xi\right)  d\xi,\label{ciasympt3}\\
c_{4}\left(  r\right)   &  =-\frac{1}{2^{14}\cdot3\cdot5}\int_{0}^{r}\xi
^{2}\left(  \xi^{2}+1\right)  ^{2}G_{3}\left(  \xi\right)  D_{4,4}\left(
\xi\right)  d\xi. \label{ciasympt4}%
\end{align}

We then proceed to compute the asymptotics of $c_{m}\left(  r\right)  $ as
$r\rightarrow\infty$. To this end, we compute the asymptotics of $G_{3}\left(
r\right)  $ and $D_{4,m}\left(  r\right)  $ as $r\rightarrow\infty.$ Using
\eqref{M2E1} and \eqref{Z1E6} we obtain:
\end{subequations}
\begin{equation}
G_{3}\left(  r\right)  =\frac{32\left(  B_{2,3}\right)  ^{2}}{r^{2}}\left(
1+O\left(  \frac{1}{r^{2}}\right)  \right)  \quad\text{ as }r\rightarrow
\infty. \label{G3asp}%
\end{equation}
In order to compute the asymptotics of $D_{4,1}\left(  r\right)  $ as
$r\rightarrow\infty$ we write:
\begin{equation}
D_{4,1}\left(  r\right)  =\psi_{2}\left(  r\right)  \left\vert
\begin{array}
[c]{cc}%
\omega_{3}\left(  r\right)  & \omega_{4}\left(  r\right) \\
\omega_{3}^{\prime}\left(  r\right)  & \omega_{4}^{\prime}\left(  r\right)
\end{array}
\right\vert -\psi_{3}\left(  r\right)  \left\vert
\begin{array}
[c]{cc}%
\omega_{2}\left(  r\right)  & \omega_{4}\left(  r\right) \\
\omega_{2}^{\prime}\left(  r\right)  & \omega_{4}^{\prime}\left(  r\right)
\end{array}
\right\vert +\psi_{4}\left(  r\right)  \left\vert
\begin{array}
[c]{cc}%
\omega_{2}\left(  r\right)  & \omega_{3}\left(  r\right) \\
\omega_{2}^{\prime}\left(  r\right)  & \omega_{3}^{\prime}\left(  r\right)
\end{array}
\right\vert. 
 \label{G1E4}%
\end{equation}
We now have, using Theorem \ref{Linear}, the following asymptotics:%
\begin{subequations}
\begin{align}
\left\vert
\begin{array}
[c]{cc}%
\omega_{3}\left(  r\right)  & \omega_{4}\left(  r\right) \\
\omega_{3}^{\prime}\left(  r\right)  & \omega_{4}^{\prime}\left(  r\right)
\end{array}
\right\vert  &  =8C_{4}K_{4}\kappa_{4}\left(  2+\sqrt{5}\right)  r^{2\sqrt
{5}-5}\left(  1+o\left(  1\right)  \right),\\
\left\vert
\begin{array}
[c]{cc}%
\omega_{2}\left(  r\right)  & \omega_{3}\left(  r\right) \\
\omega_{2}^{\prime}\left(  r\right)  & \omega_{3}^{\prime}\left(  r\right)
\end{array}
\right\vert  &  =-40K_{4}\left(  2+\sqrt{5}\right)  r^{2\sqrt{5}-5}\left(
1+o\left(  1\right)  \right)
\end{align}
as $r \to \infty$.
In the computation of the second term on the right of (\ref{G1E4}) we must
take into account the cancellations of the leading order term. Theorem
\ref{Linear} yields:
\end{subequations}
\[
\left\vert
\begin{array}
[c]{cc}%
\omega_{2}\left(  r\right)  & \omega_{4}\left(  r\right) \\
\omega_{2}^{\prime}\left(  r\right)  & \omega_{4}^{\prime}\left(  r\right)
\end{array}
\right\vert =\left\vert
\begin{array}
[c]{cc}%
5r^{-4}\left(  1+O\left(  r^{-2} \right)  \right)  &
C_{4}\left(  \kappa_{4}r^{-4}-4r^{-2\sqrt{5}}+O\left( r^{-6} \right)
\right) \\
- 20r^{-5} \left(  1 + O\left( r^{-2} \right)  \right)  &
C_{4}\left(  -4\kappa_{4}r^{-5}+8\sqrt{5}r^{-2\sqrt{5}-1}
+O\left( r^{-7} \right) \right)
\end{array}
\right\vert
\]
as $r \to \infty$.
Using that $\sqrt{4+L^{2}}-L<2$ as well as the fact that $L=4$ we obtain:%
\begin{equation}
\left\vert
\begin{array}
[c]{cc}%
\omega_{2}\left(  r\right)  & \omega_{4}\left(  r\right) \\
\omega_{2}^{\prime}\left(  r\right)  & \omega_{4}^{\prime}\left(  r\right)
\end{array}
\right\vert =40\left(  \sqrt{5}-2\right)  C_{4}r^{-2\sqrt{5}-5}
+O\left( r^{-10}\right) 
\label{G1E5a}%
\end{equation}
as $r \to \infty$.
The use of \eqref{G1E4}-\eqref{G1E5a} as well as the asymptotics of $\psi_{i}$
in Theorem \ref{Linear} yield
\begin{subequations}
\begin{equation}
D_{4,1}\left(  r\right)  =-5\cdot2^{8}\cdot\sqrt{5} C_{4} K_{4} r^{-7}
\left( 1+o\left(  1\right)  \right)
\label{G1E5}
\end{equation}
as $r \to \infty$.
On the other hand, a direct computation using Theorem \ref{Linear} gives:
\begin{align}
&  D_{4,2}\left(  r\right)  
=3\cdot2^{7}C_{4}K_{4}\kappa_{4}r^{2\sqrt{5}-3} - 3\cdot 2^8 \cdot \sqrt{5}C_{4}K_{4} r\left(  1+o\left(  1\right)  \right),
\label{G1E6}\\
&  D_{4,3}\left(  r\right)  = - 5\cdot 3\cdot 2^{7} C_{4}r^{-2\sqrt{5}-3}\left(  1+o\left(  1\right)  \right),
\label{G1E7}\\
&  D_{4,4}\left(  r\right)  = -5\cdot 3\cdot 2^{7}K_{4}r^{2\sqrt{5}-3}
- 5\cdot 3\cdot 2^{8}K_{4}r^{2\sqrt{5}-5}\left( 1+o\left(  1\right)  \right)  
\label{G1E8}%
\end{align}
as $r \to \infty$.
We can now compute the asymptotics of the functions $c_{m}\left(  r\right)  $
as $r\rightarrow\infty$. 
Since $c_{1}(r)$ and $c_{3}(r)$ are convergent to finite numbers as
$r\rightarrow\infty$, we may write them respectively as:
\end{subequations}
\begin{subequations}
\begin{align}
c_{1}\left(  r\right)   &  =c_{1}\left(  \infty\right)  -\frac{1}{2^{14}%
\cdot3\cdot5}\int_{r}^{\infty}\xi^{2}\left(  \xi^{2}+1\right)  ^{2}%
G_{3}\left(  \xi\right)  D_{4,1}\left(  \xi\right)  d\xi,\label{c1asp71}\\
c_{3}\left(  r\right)   &  =c_{3}\left(  \infty\right)  -\frac{1}{2^{14}%
\cdot3\cdot5}\int_{r}^{\infty}\xi^{2}\left(  \xi^{2}+1\right)  ^{2}%
G_{3}\left(  \xi\right)  D_{4,3}\left(  \xi\right)  d\xi\label{c1asp73}%
\end{align}
with
\begin{align}
&  c_{1}\left(  \infty\right)  =\gamma_{1}+\frac{1}{2^{14}\cdot3\cdot5}%
\int_{0}^{\infty}\xi^{2}\left(  \xi^{2}+1\right)  ^{2}G_{3}\left(  \xi\right)
D_{4,1}\left(  \xi\right)  d\xi,\\
&  c_{3}\left(  \infty\right)  =\gamma_{3}+\frac{1}{2^{14}\cdot3\cdot5}%
\int_{0}^{\infty}\xi^{2}\left(  \xi^{2}+1\right)  ^{2}G_{3}\left(  \xi\right)
D_{4,3}\left(  \xi\right)  d\xi.
\end{align}
Since \eqref{G3asp} and \eqref{G1E5} imply
\end{subequations}
\[
\frac{1}{2^{14}\cdot3\cdot5}\int_{r}^{\infty}\xi^{2}\left(  \xi^{2}+1\right)
^{2}G_{3}\left(  \xi\right)  D_{4,1}\left(  \xi\right)  d\xi
=-\frac{\sqrt{5} C_{4} K_{4} \left(  B_{2,3}\right)  ^{2}}{2\cdot3\cdot4}\frac{1}{r^{2}%
}(1+o(1)),
\]
it follows from \eqref{c1asp71} that
\[
c_{1}\left(  r\right)  
= c_{1}\left(  \infty\right)  
  + \frac{\sqrt{5}C_{4} K_{4}\left(  B_{2,3}\right)  ^{2}}{2\cdot3\cdot4}\frac{1}{r^{2}}\left( 1+o\left(  1\right)  \right)
  \quad\text{ as }r\rightarrow\infty,
\]
whence:
\begin{equation}
c_{1}(r)\omega_{1}(r)
= 3c_{1}(\infty ) r^{4}
  + \left[  \frac{\sqrt{5}C_{4} K_{4}\left(  B_{2,3}\right)^{2}}{8}
  + 2c_{1}(\infty)\right]  r^{2}
  + o(r^{2})
  \quad\text{ as }r\rightarrow\infty. 
\label{Y6E2}
\end{equation}
The full formulas of $c_{2}(r),c_{4}(r)$ given in \eqref{ciasympt2},
\eqref{ciasympt4} as well as Theorem \ref{Linear} show that
\begin{align*}
& c_{2}\left(  r\right)  \omega_{2}\left(  r\right)  +c_{4}\left(  r\right)
\omega_{4}\left(  r\right) \\
  = & -\frac{1}{2^{14}\cdot3}\frac{1}{r^{4}}\int_{0}^{r}\xi^{2}\left(  \xi
^{2}+1\right)  ^{2}G_{3}\left(  \xi\right)  \left[  D_{4,2}\left(  \xi\right)
+\frac{C_{4}\kappa_{4}D_{4,4}\left(  \xi\right)  }{5}\right]  d\xi-\\
&  -\frac{1}{2^{14}\cdot3}\frac{1}{r^{4}}O\left(  \frac{1}{r^{2}}\right)
\int_{0}^{r}\xi^{2}\left(  \xi^{2}+1\right)  ^{2}G_{3}\left(  \xi\right)
D_{4,2}\left(  \xi\right)  d\xi+\\
&  +\frac{C_{4}}{2^{14}\cdot3\cdot5}\frac{1}{r^{4}}\left(  4r^{-2\sqrt{5}%
+4}+o\left(  \frac{1}{r^{2}}\right)  \right)  \int_{0}^{r}\xi^{2}\left(
\xi^{2}+1\right)  ^{2}G_{3}\left(  \xi\right)  D_{4,4}\left(  \xi\right)
d\xi\\
  \equiv & -I_{1}-I_{2}+I_{3}.
\end{align*}
A quick check using \eqref{G3asp} and Theorem \ref{Linear} shows that $I_{2}$
grow at most with the rate of $O(r^{2\sqrt{5}-4})$ as $r\rightarrow\infty$. On
the other hand, it is readily seen by \eqref{G3asp} and \eqref{G1E8} that
\[
I_{3}= -\frac{C_{4}K_{4}\left(  B_{2,3}\right)^{2}}{2\left( \sqrt{5} + 1 \right)}r^{2}
\left( 1+o \left(1 \right) \right)
\]
as $r\rightarrow\infty$.
To compute $I_{1}$
we note that:
\[
D_{4,2}\left(  r\right)  +\frac{C_{4}\kappa_{4}D_{4,4}\left( r\right)}{5}
=-3\cdot2^{8}\cdot\sqrt{5}C_{4} K_{4} r \left(  1 + o\left(  1\right) \right) 
\]
as $r \to \infty$
due to \eqref{G1E6} and \eqref{G1E8}.
Using this as well as \eqref{G3asp}, we obtain:
\[
-I_{1} = \frac{\sqrt{5}C_{4} K_{4}\left(  B_{2,3}\right)  ^{2}}{12}r^{2}
\left( 1+o \left(1 \right) \right)
\]
as $r\rightarrow\infty$. Summarizing, we have obtained the asymptotics:
\begin{equation}
c_{2}\left(  r\right)  \omega_{2}\left(  r\right)  +c_{4}\left(  r\right)
\omega_{4}\left(  r\right)  =\frac{C_{4}K_{4}\left(  B_{2,3}\right)  ^{2}}%
{12}\cdot\frac{\sqrt{5}-1}{\sqrt{5}+1}r^{2}+o\left(  r^{2}\right)
\label{Y6E2alpha}%
\end{equation}
as $r\rightarrow\infty$.

Since
\[
c_{3}\left(  r\right)  = c_{3}\left(  \infty \right) -\frac{1}{2^{14}\cdot3\cdot5}\int_{r}^{\infty}\xi
^{2}\left(  \xi^{2}+1\right)  ^{2}G_{3}\left(  \xi\right)  D_{4,3}\left(
\xi\right)  d\xi,
\]
it follows from \eqref{Z3E8} and \eqref{G3asp} that
\[
c_{3}\left(  r\right)  = c_{3}\left(  \infty \right) + \frac{C_{4}\left(  B_{2,3}\right)^{2}}{2^{3}\left(
\sqrt{5}-1\right)  }r^{-2\sqrt{5}+2}\left( 1+o\left(  1\right)  \right) 
\]
as $r \to \infty$,
whence:
\begin{equation}
c_{3}\left(  r\right)  \omega_{3}\left(  r\right)  
= -4K_{4}c_{3}\left(  \infty \right) r^{2\sqrt{5}}
-\frac{C_{4}K_{4}\left( B_{2,3}\right)^{2}}{2\left(  \sqrt{5}-1\right)  }r^{2}
\left( 1+o\left(  1\right)  \right)  
\label{Y6E2beta}%
\end{equation}
as $r \to \infty$.
We can then compute the whole asymptotics of $V_{4,2,3}\left(  r\right)  $ as
in \eqref{F-V423}. By \eqref{Y6E2}-\eqref{Y6E2beta} we conclude
\eqref{F-Q423}.
\end{proof}

\begin{remark}
To see the contribution due to $c_{3}\left(  r\right) \omega_{3}\left( r\right)$ 
we need to study the asymptotics of $Q_{4,2,3}$. 
Using \eqref{Z3E8} we obtain the matching condition:
\[
Q_{4,2,3}\left(  r,\tau\right)  \sim 16 c_{3}\left(  \infty\right)
K_{4}r^{2\sqrt{5}-2}
\quad \text{ as } r\rightarrow\infty.
\]
This contribution would give terms of order $\varepsilon_{\ell}^{4-2\sqrt
{5}+2}=\varepsilon_{\ell}^{6-2\sqrt{5}}\gg\varepsilon_{\ell}^{2}$ in the
self-similar region where $\vert y - \bar{y}_{\ell} \vert$ is of order one. 
Then the contribution of this term to $\Phi$ would be
much larger than one unless $c_{3}\left(  \infty\right)$ is small as
$\tau\rightarrow\infty.$ 
It will be seen in Subsection \ref{Match4} that
$c_{3}( \infty )  = O( \varepsilon_{\ell}^{2\sqrt{5} +2} )$. 
\end{remark}

\section{Outer expansions.\label{outer}}

In the analysis of inner expansions we have derived the asymptotics of the
solution for a general number of peaks, but we will compute outer expansions
only for the case of two peaks for the reason stated in the previous sections.
The notation of the singularities has been denoted by $\{ y_{\ell} \}_{\ell=1}^{N}$, 
which solves \eqref{U1E2}, in the analysis of inner expansions. In
the particular case where $N =2$ we will write $y_{1} =\mathbf{a}$ and $y_{2}
= - \mathbf{a}$ with $\vert\mathbf{a} \vert= 2$. We may assume, without loss
of generality, that $\mathbf{a}=(2,0)$.

In this section we derive outer expansions for the solution of (\ref{S1E1}),
(\ref{S1E2}), i.e. for regions where $\left\vert y\right\vert $ is of order
one. To this end we argue as in the derivation of (3.40)--(3.48) in \cite{V1}.
We look for expansions with the form:
\begin{subequations}
\begin{align}
\Phi(y,\tau)  &  =\varepsilon_{\ell}^{2}\Omega(y)+\varepsilon_{\ell}
\varepsilon_{\ell,\tau}Z(y)+\cdots,\label{Y1E1}\\
W(y,\tau)  &  =\mathcal{W}_{0}(y,\tau)+\varepsilon_{\ell}^{2} \mathcal{W}%
_{1}(y,\tau)+\cdots. \label{Y1E1a}%
\end{align}

Then, to the leading order, we obtain:%
\end{subequations}
\[
-\Delta\mathcal{W}_{0} = 0, \quad y\neq y_{\ell}%
\]
with matching condition:%
\[
\nabla_{y}\mathcal{W}_{0}(y)\sim-\frac{4(y\mp\mathbf{a})}{|y\mp\mathbf{a}%
|^{2}}\quad\text{ as } y\rightarrow\pm\mathbf{a},
\]
where $\mathbf{a} = (2,0)$. Then:%
\begin{equation}
\nabla_{y}\mathcal{W}_{0}(y)=-\left(  \frac{4(y-\mathbf{a})}{|y-\mathbf{a}
|^{2}}+\frac{4(y+\mathbf{a})}{|y+\mathbf{a}|^{2}}\right).
\label{Y2E4}%
\end{equation}
Neglecting the terms of order $\varepsilon_{\ell}^{2}$ we obtain:%
\begin{equation}
\Phi_{\tau}=\Delta\Phi-\frac{y\cdot\nabla\Phi}{2}+\left(  \frac{4(y-\mathbf{a}%
)}{|y-\mathbf{a}|^{2}}+\frac{4(y+\mathbf{a})}{|y+\mathbf{a}|^{2}}\right)
\cdot\nabla\Phi-\Phi. \label{Y1E2}%
\end{equation}
Plugging the expansion (\ref{Y1E1}) into (\ref{Y1E2}) we obtain the equations:%
\begin{equation}
L(\Omega)=0,\quad y\neq\pm\mathbf{a}, \label{Y1E3}%
\end{equation}%
\begin{equation}
L\left(  Z\right)  =2\Omega,\quad y\neq\pm\mathbf{a}, \label{M1E1}%
\end{equation}
where
\begin{equation}
L(\Omega)=\Delta\Omega-\frac{y\cdot\nabla\Omega}{2}+\left(  \frac
{4(y-\mathbf{a})}{|y-\mathbf{a}|^{2}}+\frac{4(y+\mathbf{a})}{|y+\mathbf{a}%
|^{2}}\right)  \cdot\nabla\Omega-\Omega.
\label{Y1E4}
\end{equation}
Due to (\ref{S4E2}) and (\ref{S5E7}) we should impose the following matching
condition for $\Omega:$%
\begin{equation}
\Omega\left(  y\right)  \sim\frac{8}{\left\vert y\mp\mathbf{a}\right\vert
^{4}} \quad\text{ as }y\rightarrow\pm\mathbf{a}. 
\label{Y1E5}%
\end{equation}

On the other hand, in order to obtain matchings in the regions where
$\left\vert x\right\vert $ is of order one, we must assume that $\Omega\left(
y\right)  $ increases at most algebraically as $\left\vert y\right\vert
\rightarrow\infty.$

The function $\Omega$ cannot be computed in this case by means of a closed
formula as in the radial case considered in \cite{V1}. 
Nevertheless, it is possible to prove that the problem \eqref{Y1E3}-\eqref{Y1E4}-\eqref{Y1E5} 
define uniquely a function $\Omega$ with the properties required to describe 
the leading asymptotics of $\Phi$ in the outer region. 
More precisely, the following result holds.

\begin{lemma} \label{Prop1} 
Assume that $\left\vert \mathbf{a}\right\vert =2.$ 
Then for every $D_{1}, D_{2}\in\mathbb{R}$ there exists a unique solution of \eqref{Y1E3},
\eqref{Y1E4} satisfying:
\begin{align}
\Omega\left(  y\right)   &  \sim\frac{D_{1}}{\left\vert y-a\right\vert ^{4}}
\quad \text{ as }y\rightarrow\mathbf{a},\quad\Omega\left(  y\right)
\sim \frac{D_{2}}{\left\vert y + \mathbf{a}\right\vert ^{4}}
\quad \text{ as }y\rightarrow - \mathbf{a},
\label{Y1E6a}\\
\left\vert \Omega\left(  y\right)  \right\vert  &  \leq\left\vert y\right\vert^{m}
\quad\text{ for }\left\vert y\right\vert \geq 5
\ \text{for some } m>0.
\label{Y1E6b}
\end{align}
Moreover, the asymptotics of $\Omega$ near the singular points $\pm\mathbf{a}$ are given by:
\begin{subequations}
\begin{align}
\Omega\left(  y\right)   &  \sim D_{1}\left[  \frac{1}{\left\vert
y-\mathbf{a}\right\vert ^{4}}+\Psi_{1}\left(  y-\mathbf{a}\right)  +\Psi
_{2}\left(  y-\mathbf{a}\right)  +\Psi_{3}\left(  y-\mathbf{a}\right)
+A\right],
\label{T2E5b}\\
\Omega\left(  y\right)   &  \sim D_{2}\left[  \frac{1}{\left\vert
y+\mathbf{a}\right\vert ^{4}}+\Psi_{1}\left(  y+\mathbf{a}\right)  -\Psi
_{2}\left(  y+\mathbf{a}\right)  +\Psi_{3}\left(  y+\mathbf{a}\right)
+A\right], 
\label{T2E5}%
\end{align}
where $A=A\left(  D_{1},D_{2}\right)$ is a constant and:
\end{subequations}
\[
\Psi_{1}\left(  Y\right)  =\frac{1}{16}\left[  \frac{2}{\left\vert
Y\right\vert ^{2}}+\frac{\left(  \mathbf{a}\cdot Y\right)  ^{2}}{\left\vert
Y\right\vert ^{4}}\right]  ,\ \Psi_{2}\left(  Y\right)  =\frac{\left(
\mathbf{a}\cdot Y\right)  }{96\left\vert Y\right\vert ^{2}}\left[
3-\frac{\left(  \mathbf{a}\cdot Y\right)  ^{2}}{\left\vert Y\right\vert ^{2}%
}\right]  ,\ \Psi_{3}\left(  Y\right)  =\frac{1}{256}\frac{\left(
\mathbf{a}\cdot Y\right)  ^{4}}{\left\vert Y\right\vert ^{4}}.\
\]
\end{lemma}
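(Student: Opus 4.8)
The plan is to construct $\Omega$ by separating the singular behavior prescribed by \eqref{Y1E6a} and reducing to a solvability problem for a remainder term on all of $\mathbb{R}^2$. First I would note that the operator $L$ in \eqref{Y1E4} has exactly two "dangerous" points $\pm\mathbf{a}$ where the coefficient $4(y\mp\mathbf{a})/|y\mp\mathbf{a}|^2$ is singular; away from these points $L$ is a nice second-order elliptic operator. Near $y=\mathbf{a}$ one checks, by plugging ansatz terms $|y-\mathbf{a}|^{-4}$, $\Psi_1$, $\Psi_2$, $\Psi_3$ (homogeneous of degrees $-4,-2,-1,0$ in $Y=y-\mathbf{a}$) into $L$, that the combination $\frac{1}{|y-\mathbf{a}|^4}+\Psi_1+\Psi_2+\Psi_3$ is an approximate local solution of $L(\cdot)=0$ up to an error that is bounded near $\mathbf{a}$; this is a routine computation dictated precisely by the Taylor expansion of the smooth part of the coefficient around $\mathbf{a}$ (and analogously, with $\Psi_2$ replaced by $-\Psi_2$, around $-\mathbf{a}$ because $\mathbf{a}\cdot Y$ changes sign). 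These explicit functions $\Psi_j$ are exactly the ones displayed in the statement, so writing them down is the content of the expansions \eqref{T2E5b}, \eqref{T2E5}.

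Next I would set $\Omega = D_1\chi_1\big(\tfrac{1}{|y-\mathbf{a}|^4}+\Psi_1+\Psi_2+\Psi_3\big) + D_2\chi_2\big(\tfrac{1}{|y+\mathbf{a}|^4}+\Psi_1-\Psi_2+\Psi_3\big) + R(y)$, where $\chi_1,\chi_2$ are smooth cut-offs localized near $\mathbf{a}$ and $-\mathbf{a}$. Then $R$ must solve $L(R) = F$ where $F$ is bounded, compactly supported away from $\pm\mathbf{a}$ plus bounded terms near the singular points, and $R$ is required to grow at most algebraically at infinity and to be bounded (continuous) near $\pm\mathbf{a}$. The key linear-analysis step is to show that $L$, acting between suitable weighted spaces (say $R$ in a space with algebraic growth $|y|^m$ at infinity and $F$ decaying or bounded), is invertible. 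For this I would: (i) establish a maximum principle / a priori estimate for $L$ — the zeroth-order term is $-\Omega$ with the right (negative) sign, which makes $L$ behave like $\Delta - \tfrac{y}{2}\cdot\nabla - 1$ plus a gradient perturbation that is integrable against the Gaussian-type weight $e^{-|y|^2/4}$, so the operator has no kernel and no cokernel in the relevant class; (ii) use standard elliptic theory (interior Schauder/$L^p$ estimates) to upgrade existence of a weak solution to the regularity needed, in particular continuity of $R$ up to $\pm\mathbf{a}$ so that the prescribed singular coefficients $D_1,D_2$ are not altered. Uniqueness follows from the same a priori estimate applied to the difference of two solutions, which then satisfies $L=0$ with no singularity and algebraic growth, forcing it to vanish.

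Finally, the constant $A=A(D_1,D_2)$ appearing in \eqref{T2E5b}, \eqref{T2E5} is simply the limiting value of the regular part of $\Omega$ at the respective singular point, i.e. $A=\lim_{y\to\mathbf{a}}\big[\Omega(y)-D_1(\tfrac{1}{|y-\mathbf{a}|^4}+\Psi_1+\Psi_2+\Psi_3)\big]/D_1$ read off from $R$; by linearity of the problem in $(D_1,D_2)$ this is an affine (in fact linear) function of $(D_1,D_2)$, and one should check that the two limits (at $+\mathbf{a}$ and at $-\mathbf{a}$) coincide by the $y\mapsto -y$ symmetry of the configuration when $D_1=D_2$, which is why a single constant $A$ suffices with the stated sign pattern on $\Psi_2$. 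The main obstacle I anticipate is step (ii): controlling the solution near the two singular points $\pm\mathbf{a}$, where the drift coefficient blows up like $1/|y\mp\mathbf{a}|$. One has to check that this inverse-linear singularity is subcritical for the elliptic theory in two dimensions — it is, since $1/|Y|$ is in $L^p_{loc}$ for all $p<2$ and the associated first-order perturbation of $\Delta$ is a compact-type lower-order term — so that no extra solvability condition (beyond the two free parameters $D_1,D_2$) is generated and no logarithmic corrections to \eqref{Y1E6a} appear. Everything else (the explicit $\Psi_j$, the growth bound \eqref{Y1E6b}, uniqueness) then follows by the linear arguments above, paralleling the radial computation of \cite{V1} but replacing the explicit ODE solution there by the Fredholm argument here.
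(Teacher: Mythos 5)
Your overall skeleton (explicit singular ansatz $\tfrac{1}{|Y|^{4}}+\Psi_{1}\pm\Psi_{2}+\Psi_{3}$ near $\pm\mathbf{a}$, a remainder solving a linear problem, uniqueness by a comparison-type argument) is close in spirit to the paper's, but the step you yourself flag as the main obstacle is exactly where the proposal breaks down, and the way you propose to dispose of it is not correct. First, a smaller point that matters downstream: applying $L$ to $\tfrac{1}{|Y_{j}|^{4}}+\Psi_{1}+(-1)^{j+1}\Psi_{2}+\Psi_{3}$ does \emph{not} leave a bounded error near $\pm\mathbf{a}$; the computation in the paper (cf.\ \eqref{T3E2}) leaves terms of size $|Y_{j}|^{-1}$ with angular structure $(\mathbf{a}\cdot Y_{j})^{k}/|Y_{j}|^{k+1}$, which is why the paper adds further correctors $\omega_{j}$, built by separation of variables for the model operator $\Delta+\tfrac{4Y\cdot\nabla}{|Y|^{2}}$, before any barrier or remainder argument is attempted. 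So your source term $F$ is unbounded near $\pm\mathbf{a}$ as written.

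The more serious gap is the claim that the drift $\tfrac{4Y}{|Y|^{2}}\cdot\nabla$ is ``subcritical'' because $|Y|^{-1}\in L^{p}_{loc}$ for $p<2$, hence a compact lower-order perturbation to which standard elliptic theory applies. In two dimensions a drift coefficient must lie in $L^{p}$ with $p\geq n=2$ (in practice $p>2$) for that theory, and $|Y|^{-1}$ just misses $L^{2}_{loc}$; moreover the term scales exactly like the Laplacian, so it is a critical, same-order perturbation that changes the local solution structure at $\pm\mathbf{a}$: the indicial exponents per Fourier mode are $\alpha_{n}^{\pm}=-2\pm\sqrt{4+n^{2}}$ rather than $\pm n$. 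Consequently you cannot invoke interior Schauder/$L^{p}$ estimates to conclude that the remainder $R$ is continuous up to $\pm\mathbf{a}$, that the limit defining $A$ exists, or that no logarithmic corrections to \eqref{Y1E6a} appear — these are precisely the conclusions at stake. The paper obtains them by (i) trapping the solution between explicit super- and subsolutions $\Omega^{\pm}=D_{1}\hat W_{1}+D_{2}\hat W_{2}\pm K$ on the truncated domains $D_{\delta,R}$ and passing to the limit $\delta\to0$, $R\to\infty$ by compactness, and (ii) a Fourier decomposition of the bounded remainder around each singular point, solving the resulting mode ODEs explicitly and showing convergence at rate $\rho^{\sqrt{5}-2}$, which is what produces the constant $A$. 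Similarly, your Fredholm assertion (``no kernel and no cokernel in the relevant class'') in weighted spaces allowing algebraic growth is stated but not substantiated; the paper avoids it entirely, proving uniqueness by comparison with a supersolution modified to grow like $|y|^{m}$ at infinity. To repair your argument you would need either to reproduce this barrier-plus-indicial-root analysis near $\pm\mathbf{a}$, or to work in weighted H\"older/Sobolev spaces adapted to the exponents $-2\pm\sqrt{4+n^{2}}$ of the singular operator, neither of which follows from standard perturbative elliptic theory.
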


\begin{remark}
\label{RemonA} Concerning the constant $A$ in the asymptotics \eqref{T2E5b},
\eqref{T2E5}, we have computed its value using the PDE solver "PDE tool box"
from the Matlab package. We have observed that the numerical value of $A$ is
between $-1$ and $-0.9$ for $D_1 = D_2 = 8$. 
The crucial fact is that $A<0.$ This negativity is a
sufficient condition to ensure that a certain differential equation satisfied by
$\varepsilon_{\ell}$ has solutions approaching zero as $\tau\rightarrow \infty$.
(See Subsection \ref{ODE}).
\end{remark}

\begin{remark}
A result analogous to Lemma \ref{Prop1} could be shown using similar
methods for singularities of $\Omega$ at arbitrary number of points, or more
precisely for operators with the form:%
\begin{equation}
\bar{L}\left(  \Omega\right)  =\Delta\Omega-\frac{y\cdot\nabla\Omega}{2}%
+\sum_{j=1}^{N}\frac{4(y-\mathbf{a}_{j})}{\left\vert y-\mathbf{a}_{j}\right\vert ^{2}}
\cdot\nabla\Omega-\Omega\label{T1E3a}
\end{equation}
with $\mathbf{a}_{j}\in\mathbb{R}^{2},\ j=1,...,N,$ $\mathbf{a}_{j}%
\neq\mathbf{a}_{k}$ for $j\neq k$ satisfying:%
\begin{equation}
\frac{\mathbf{a}_{k}}{2}=\sum_{j=1,\ j\neq k}^{N}
\frac{4(\mathbf{a}_{k}-\mathbf{a}_{j})}{\left\vert \mathbf{a}_{k}-\mathbf{a}_{j}\right\vert^{2}}. 
\label{T1E3}%
\end{equation}

If \eqref{T1E3} does not hold, the form of the asymptotics \eqref{T2E5b},
\eqref{T2E5} would contain additional terms with the homogeneity of $1/| y - \mathbf{a}_{j}|^{3}.$
Actually the condition $\left\vert \mathbf{a}\right\vert =2$ in Lemma \ref{Prop1} is just 
the condition \eqref{T1E3} in the case of two peaks, i.e. $N=2$. 
The functions $\{ \Psi_{i} \}_{i=1}^{N}$ that would appear in the study of the
general case \eqref{T1E3a} have similar homogeneity properties to the ones
described in Lemma \ref{Prop1}, but slightly different functional forms.
\end{remark}

\begin{remark}
A characteristic feature of the expansions \eqref{T2E5b}, \eqref{T2E5} is the
absence of logarithmic terms in $\Psi_{3}.$ Very likely this property holds in
general under the assumption \eqref{T1E3} for every integer $N \ge2$. We
prove, however, such absence of logarithmic terms just in the case of two peaks.
\end{remark}

\begin{proof}
In what follows, we denote by $Y_{1}=y-\mathbf{a},\ r_{1}=\left\vert Y_{1}\right\vert ,\ Y_{2}%
=y+\mathbf{a},\ r_{2}=\left\vert Y_{2}\right\vert$ for notational simplicity.
The key point is to define suitable sub- and supersolutions having the
expected asymptotics near the singular points. To this end we define
auxiliary functions $\hat{W}_{j}$ as:
\begin{equation}
\hat{W}_{j}\left(  y\right)  =\left[  \frac{1}{r_{j}^{4}}+\Psi_{1}\left(
Y_{j}\right)  + \left( -1 \right)^{j+1}\Psi_{2}\left(  Y_{j}\right)  +\Psi_{3}\left(  Y_{j}\right)
+\omega_{j}\left(  Y_{j}\right)  \right]  \eta\left(  Y_{j}\right)  ,
\quad j=1,2, 
\label{T3E0}%
\end{equation}
where $\eta\left(  \xi\right)$ is a $C^{\infty}$ cutoff function satisfying
$\eta\left(  \xi\right)  =1$ for $\left\vert \xi\right\vert \leq 1,\ \eta\left(  \xi\right)  =0$ 
for $\left\vert \xi\right\vert \geq 2,\ 0 \leq \eta \leq 1$ 
and where the functions $\omega_{j}\left(  Y_{j}\right)$
will be defined later.  
We construct a supersolution $\Omega^{+}$ and a subsolution $\Omega^{-}$ of the form:
\begin{subequations}
\begin{align}
\Omega^{+}\left(  y\right)   &  =D_{1}\hat{W}_{1}\left(  y\right)  +D_{2}%
\hat{W}_{2}\left(  y\right)  + K,
\label{T3E0super}\\
\Omega^{-}\left(  y\right)   &  =D_{1}\hat{W}_{1}\left(  y\right)  +D_{2}%
\hat{W}_{2}\left(  y\right)  - K 
\label{T3E0sub}
\end{align}
with a constant $K>0$ to be selected later. Some explicit but rather tedious
computations yield:
\end{subequations}
\begin{align}
L\left(  \frac{1}{r_{j}^{4}}+\Psi_{1}\left(  Y_{j}\right)  \right)   &
=-\frac{\left(  a\cdot Y_{j}\right)  ^{2}}{2r_{j}^{4}r_{\tau\left(  j\right)
}^{2}}-\frac{2\left(  a\cdot Y_{j}\right)  ^{3}}{r_{j}^{6}r_{\tau\left(
j\right)  }^{2}}-\frac{1}{2}\Psi_{1}\left(  Y_{j}\right)  +\frac{\left(
Y_{1}\cdot Y_{2}\right)  }{r_{j}^{4}r_{\tau\left(  j\right)  }^{2}}%
+\frac{4\left(  a\cdot Y_{j}\right)  }{r_{j}^{4}r_{\tau\left(  j\right)  }%
^{2}}+\nonumber\\
&  \quad+\frac{1}{64r_{j}^{4}}\left[  1-\frac{16}{r_{\tau\left(  j\right)
}^{2}}\right]  \left[  \left\vert a\right\vert ^{2}\left(  Y_{1}\cdot
Y_{2}\right)  -2\left(  a\cdot Y_{1}\right)  \left(  a\cdot Y_{2}\right)
+4\left(  a\cdot Y_{j}\right)  ^{2}\frac{\left(  Y_{1}\cdot Y_{2}\right)
}{r_{j}^{2}}\right]  \label{T3E1}%
\end{align}
for $j=1,2$, where $\tau\left(  j\right)  =3-j$ for $j=1,2.$ In the derivation
of these formulas we have used:
\[
-\frac{1}{4}Y_{\tau\left(  j\right)  }\cdot\nabla\left(  \frac{1}{r_{j}^{4}%
}\right)  +\frac{4Y_{\tau\left(  j\right)  }}{r_{\tau\left(  j\right)  }^{2}%
}\nabla\left(  \frac{1}{r_{j}^{4}}\right)  =O\left(  \frac{1}{r_{j}^{4}%
}\right)  \quad\text{ as }r_{j}\rightarrow0,\quad j=1,2.
\]
This formula holds due to the assumption that $\left\vert \mathbf{a}%
\right\vert =2.$ In all these computations we often use:%
\[
r_{\tau\left(  j\right)  }^{2}-16=r_{j}^{2}-4\left(  -1\right)  ^{j}\left(
\mathbf{a}\cdot Y_{j}\right)  ,\quad j=1,2.
\]
It follows from (\ref{T3E1}) that:%
\[
L\left(  \frac{1}{r_{j}^{4}}+\Psi_{1}\left(  Y_{j}\right)  \right)
=\frac{\left(  -1\right)  ^{j+1}}{8}\frac{\left(  \mathbf{a}\cdot
Y_{j}\right)  }{r_{j}^{6}}\left[  3r_{j}^{2}-\left(  \mathbf{a}\cdot
Y_{j}\right)  ^{2}\right]  +O\left(  \frac{1}{r_{j}^{2}}\right)  \ \ \text{as
}r_{j}\rightarrow0,\quad j=1,2.
\]

We now use:%
\[
\Delta\left(  \Psi_{2}\right)  \left(  Y\right)  +\frac{4}{\left\vert
Y\right\vert ^{2}}\left(  Y\cdot\nabla\right)  \left(  \Psi_{2}\right)
\left(  Y\right)  =-\frac{\left(  \mathbf{a}\cdot Y\right)  }{8r^{6}}\left[
3\left\vert Y\right\vert ^{2}-\left(  \mathbf{a}\cdot Y\right)  ^{2}\right]
\]
as well as the fact that the terms in $L$ that are not
$\Delta$ and $\frac{4}{\left\vert Y\right\vert ^{2}}\left(  Y\cdot \nabla\right)$ 
yield only lower order contributions. Therefore, after some computations, it follows that:
\[
L\left(  \frac{1}{r_{j}^{4}}+\Psi_{1}\left(  Y_{j}\right)  +\left(  -1\right)
^{j+1}\Psi_{2}\left(  Y_{j}\right)  \right)  =-\frac{3\left(  \mathbf{a}\cdot
Y_{j}\right)  ^{2}}{16r_{j}^{4}}+\frac{\left(  \mathbf{a}\cdot Y_{j}\right)
^{4}}{16r_{j}^{6}}+O\left(  \frac{1}{r_{j}}\right)
\quad \text{ as } r_{j}\rightarrow 0.
\]
Using then that:
\[
\Delta\Psi_{3}+\frac{4\left(  Y\cdot\nabla\right)  \Psi_{3}}{\left\vert
Y\right\vert ^{2}}=\frac{3\left(  \mathbf{a}\cdot Y\right)  ^{2}}{16\left\vert
Y\right\vert ^{4}}-\frac{\left(  \mathbf{a}\cdot Y\right)  ^{4}}{16\left\vert
Y\right\vert ^{6}}%
\]
as well as the fact that the remaining part of $L$ 
gives only lower order contributions, we obtain:
\begin{equation}
L\left(  \frac{1}{r_{j}^{4}}+\Psi_{1}\left(  Y_{j}\right)  +\left(  -1\right)
^{j+1}\Psi_{2}\left(  Y_{j}\right)  +\Psi_{3}\left(  Y_{j}\right)  \right)
=\sum_{k=0}^{5}\beta_{k}\frac{\left(  \mathbf{a}\cdot Y_{j}\right)  ^{k}%
}{\left\vert Y_{j}\right\vert ^{k+1}}+g_{j}\left(  Y_{j}\right)  \ \ j=1,2
\label{T3E2}%
\end{equation}
with $g_{j}\in L^{\infty}\left(  B_{2}\left(  \left(  -1\right)^{j+1}\mathbf{a}\right)  \right)$ 
and some suitable $\beta_{k}\in\mathbb{R}$. 
Using a separation variables argument we can construct
functions $\omega_{j},\ j=1,2,$ with the form:%
\[
\omega_{j}\left(  Y_{j}\right)  =\sum_{k=0}^{5}\kappa_{k}\frac{\left(
\mathbf{a}\cdot Y_{j}\right)  ^{k}}{\left\vert Y_{j}\right\vert ^{k-1}},
\quad\ j=1,2.
\]
The constant $\kappa_{k}$ is selected in order that functions $\omega_{j}$ may satisfy:
\begin{equation}
\left(  \Delta+\frac{4\left(  Y_{j}\cdot\nabla\right)  }{\left\vert
Y_{j}\right\vert ^{2}}\right)  \omega_{j}\left(  Y_{j}\right)  =-\sum
_{k=0}^{5}\beta_{k}\frac{\left(  \mathbf{a}\cdot Y_{j}\right)  ^{k}%
}{\left\vert Y_{j}\right\vert ^{k+1}},\quad\ j=1,2. \label{T3E3}%
\end{equation}
We then define the functions $\hat{W}_{j},\ j=1,2$, as in (\ref{T3E0}). 
It follows from (\ref{T3E2}) and (\ref{T3E3}) that:%
\[
L(\hat{W}_{j})=f_{j}(y),\quad j=1,2
\]
with $\Vert f_{j}\Vert_{L^{\infty}\left(  \mathbb{R}^{2}\right)  }<\infty.$
Using the boundedness of $f_{j}$ as well as the fact that the functions
$\hat{W}_{j}$, $j = 1,2$, are compactly supported, we observe that 
$\Omega^{+},\ \Omega^{-}$ in \eqref{T3E0super}, \eqref{T3E0sub} are respectively 
super- and subsolutions of (\ref{Y1E3})
in $\mathbb{R}^{2}\setminus\left\{  -\mathbf{a},\mathbf{a}\right\}  $ if $K$
is chosen sufficiently large. 

We now define a family of domains $D_{\delta,R}$ as:
\begin{equation}
D_{\delta,R}=B_{R}(0)\backslash\lbrack B_{\delta}(-\mathbf{a}) \cup B_{\delta
}(\mathbf{a})], \quad0<\delta<1,\quad R>8. \label{T3E2a}%
\end{equation}
Let us consider the following family of boundary value problems:%
\begin{subequations}
\begin{align}
L\left(  \Omega_{\delta,R}\right)   &  =0\quad\text{ in }D_{\delta
,R},\label{T3E3a}\\
\Omega_{\delta,R}  &  =D_{j}\left[  \frac{1}{r_{j}^{4}}+\Psi_{1}\left(
Y_{j}\right)  +(-1)^{j+1}\Psi_{2}\left(  Y_{j}\right)  +\Psi_{3}\left(
Y_{j}\right)  \right]  \text{ on }\partial B_{\delta}\left(  \left(
-1\right)  ^{j+1}\mathbf{a}\right)  ,\ j=1,2,\label{T3E3b}\\
\Omega_{\delta,R}  &  =0\quad\text{ on }\partial B_{R}\left(  0\right)  .
\label{T3E3c}%
\end{align}
Classical results on elliptic equations (cf.~\cite[Corollary 9.18]{GTr} for
instance) show that the functions $\Omega_{\delta,R}$ are uniquely defined for
any $\delta$ and $R$ in (\ref{T3E2a}). Moreover, since $\Omega^{-}<\Omega
_{\delta,R}<\Omega^{+}$ on $\left[  \bigcup_{j=1,2}\partial B_{\delta}\left(
\left(  -1\right)  ^{j+1}\mathbf{a}\right)  \right]  \cup\partial B_{R}\left(
0\right)$ for $K>0$ sufficiently large independent of $\delta$ and $R$, it follows
by comparison that:
\end{subequations}
\begin{equation}
\Omega^{-}<\Omega_{\delta,R}<\Omega^{+} \quad\text{ in }D_{\delta,R}.
\label{T3E4}%
\end{equation}
Classical regularity theory for elliptic equations implies that $\left\vert
\nabla^{k}\Omega_{\delta,R}\right\vert $, $k=1,2,3,$ are bounded in compact
sets of $D_{\delta,R}.$ 
A compactness argument then shows that there exists a smooth function 
$\Omega$ satisfying $\Omega^{-}<\Omega<\Omega^{+}$ in $\mathbb{R}^{2}
\setminus\left\{  -\mathbf{a},\mathbf{a}\right\}  $ and a subsequence
$\left\{  \left(  \delta_{\ell},R_{\ell}\right)  \right\}  $ such that
$\left(  \delta_{\ell},R_{\ell}\right)  \rightarrow\left(  0,\infty\right)  $
and:
\[
\Omega_{\delta_{\ell},R_{\ell}}\rightarrow\Omega\text{ \ as\ \ }%
\ell\rightarrow\infty\ \ \text{in\ \ }C^{2}\left(  \mathcal{K}\right)
\]
for each compact $\mathcal{K}\subset\mathbb{R}^{2}\setminus\left\{
-\mathbf{a},\mathbf{a}\right\}  .$ Therefore $L\left(  \Omega\right)  =0$ in
$\mathbb{R}^{2}\setminus\left\{  -\mathbf{a},\mathbf{a}\right\}  .$ Moreover,
the functions:
\[
\phi_j \left(  y\right)  \equiv
\Omega\left(  y\right)  -D_{j}\left[  \frac{1}%
{r_{j}^{4}}+\Psi_{1}\left(  Y_{j}\right)  +(-1)^{j+1}\Psi_{2}\left(
Y_{j}\right)  +\Psi_{3}\left(  Y_{j}\right)  \right],
\quad j=1,2,
\]
are bounded in a neighborhood of the points 
$\left\{  -\mathbf{a},\mathbf{a}\right\}$ and satisfy:
\[
L\left(  \phi_j \right)  =\frac{Q_{j}\left(  y\right)  }{\left\vert
Y_{j}\right\vert },
\]
where%
\begin{equation}
\left\vert Q_{j}\left(  y\right)  \right\vert \leq C \quad\text{ in }
0<\left\vert Y_{j}\right\vert \leq1,\quad j=1,2. \label{T3E5a}%
\end{equation}
To conclude the proof it only remains to show that the limits
$\lim_{y\rightarrow a}\phi_1\left(  y\right)$ and $\lim_{y\rightarrow -a}\phi_2 \left(  y\right)$
exist. To this end we estimate the derivatives of $\phi_j$ as follows. 
For each $0<R<1$ we define $\varphi_{R, j}\left(  \xi\right)  = \phi_j \left(  \pm\mathbf{a}%
+ R\xi\right).$ Then:%
\[
\Delta_{\xi}\varphi_{R, j} + \frac{4\xi\cdot\nabla_{\xi} \varphi_{R,j}}{\left\vert
\xi\right\vert ^{2}}+a_{R}\left(  \xi\right)  \cdot \nabla_{\xi} \varphi_{R,j}
=O\left( R\right),\quad\frac{1}{4}\leq\left\vert \xi\right\vert \leq 4,
\]
where $\left\vert a_{R}\left(  \xi\right)  \right\vert \leq CR.$ 
Classical regularity theory for elliptic equations yields $\left\vert \nabla_{\xi
}\varphi_{R}\right\vert \leq C$ in $1/2\leq \left\vert \xi\right\vert \leq2$.
Therefore:
\begin{equation}
\left\vert \phi_j \left(  y\right)  \right\vert 
+\left\vert Y_{j}\right\vert\left\vert \nabla\phi\left(  y\right)  \right\vert 
\leq C\quad\text{ in }0<\left\vert Y_{j}\right\vert \leq1,\quad j=1,2 \label{T3E5}%
\end{equation}
for some $C>0.$ In order to prove the existence of the limits 
$\lim_{y\rightarrow\pm\mathbf{a}}\phi_j \left(  y\right)$ we now use a Fourier
analysis argument. We use polar coordinates defined as:%
\[
Y_{j}=y\mp\mathbf{a}=\rho_{j}\left( \cos \theta_{j}
,\sin \theta_{j}\right),\quad j=1,2.
\]
We then write:%
\[
\phi_j \left(  y\right)  =\Phi_{j}\left(  \rho_{j},\theta_{j}\right)
=\sum_{n=-\infty}^{\infty}c_{n}\left(  \rho_{j}\right)  e^{in\theta_{j}}.
\]
The functions $c_{n}\left(  \rho_{j} \right)$ solve a second order ODE, which
can be solved explicitly:%
\[
c_{n}\left(  \rho_{j}\right)  
= A_{1,n}\rho_{j}^{\alpha_{n}^{+}}+A_{2,n}\rho_{j}^{\alpha_{n}^{-}}
  - \int_{\rho_{j}}^{1}\left(  \frac{\rho_{j}}{s}\right)^{\alpha_{n}^{+}}
  \frac{Q_{j,n}\left(  s\right)  }{\alpha_{n}^{+}-\alpha_{n}^{-}}ds
  - \int_{\rho_{j}}^{1}\left( \frac{\rho_{j}}{s}\right)^{\alpha_{n}^{-}}
  \frac{Q_{j,n}\left(  s\right)}{\alpha_{n}^{-}-\alpha_{n}^{+}}ds,\ 
  j=1,2,
\]
where:%
\begin{align}
&  Q_{j,n}\left(  s\right)  =\frac{1}{2\pi}\int_{0}^{2\pi}Q_{j}\left(
\pm\mathbf{a}+s\left(  \cos \theta_{j},\ \sin \theta_{j} \right)  \right)  
 e^{-in\theta_{j}}d\theta_{j},\label{T3E6a}\\
&  \alpha_{n}^{+}=-2+\sqrt{4+n^{2}},\quad\alpha_{n}^{-}=-2-\sqrt{4+n^{2}%
},\quad n\in\mathbb{Z} 
\label{T3E6}
\end{align}
and where $A_{1,n},\ A_{2,n}$ are constants related to the Fourier coefficients of
the functions $\Phi_{j}\left(  1,\theta_{j}\right)$. 
Since these functions are in $C^{\infty}\left(  S^{1}\right)$,
for every $\beta>0$, there exists a constant $C_{\beta}>0$ such that
\begin{equation}
\left\vert A_{1,n}\right\vert +\left\vert A_{2,n}\right\vert \leq
\frac{C_{\beta}}{1+\left\vert n\right\vert ^{\beta}},
\quad n \in \mathbb{Z}.
\label{T3E7}%
\end{equation}
On the other hand, due to (\ref{T3E5a}) and (\ref{T3E5})
the coefficients $c_{n}\left(  \rho_{j}\right)  $ and $Q_{j,n}\left(  \rho
_{j}\right)$ are bounded for $0<\rho_{j}<1$. This implies:%
\[
A_{2,n}=\int_{0}^{1}\left(  \frac{1}{s}\right)  ^{\alpha_{n}^{-}}\frac
{Q_{j,n}\left(  s\right)  }{\alpha_{n}^{-}-\alpha_{n}^{+}}ds,
\]
whence:%
\[
c_{n}\left(  \rho_{j}\right)  =A_{1,n}\rho_{j}^{\alpha_{n}^{+}}-\int_{\rho
_{j}}^{1}\left(  \frac{\rho_{j}}{s}\right)  ^{\alpha_{n}^{+}}\frac
{Q_{j,n}\left(  s\right)  }{\alpha_{n}^{+}-\alpha_{n}^{-}}ds
+\int_{0}^{\rho_{j}}\left(  \frac{\rho_{j}}{s}\right)  ^{\alpha_{n}^{-}%
}\frac{Q_{j,n}\left(  s\right)}{\alpha_{n}^{-}-\alpha_{n}^{+}}ds,
\quad j=1,2,\quad n\in\mathbb{Z}.
\]
Using (\ref{T3E5a})-(\ref{T3E7}) we obtain:%
\[
\left\vert c_{n}\left(  \rho_{j}\right)  -A_{1,0}\delta_{n,0}\right\vert
\leq\frac{C\rho_{j}^{\sqrt{5}-2}}{1+\left\vert n\right\vert ^{2}}+\frac
{C\rho_{j}}{1+\left\vert n\right\vert ^{2}}\leq\frac{C\rho_{j}^{\sqrt{5}-2}%
}{1+\left\vert n\right\vert ^{2}},
\]
where symbol $\delta_{n,0}$ stands for the Kronecker delta.
Then:%
\[
\left\vert \phi\left(  y\right)  -A_{1,0}\delta_{n,0}\right\vert \leq
C\rho_{j}^{\sqrt{5}-2}\sum_{n=-\infty}^{\infty}\frac{1}{1+\left\vert
n\right\vert ^{2}}\leq C\rho_{j}^{\sqrt{5}-2}%
\]
and the therefore the limits $\lim_{y\rightarrow\pm\mathbf{a}}\phi_j \left(  y\right)$ exist. 
The fact that the value of $A$ is the same in \eqref{T2E5b} and \eqref{T2E5} follows by a symmetry argument. 

To prove the uniqueness result we construct a supersolution for \eqref{Y1E3}. 
Consider a function $\Omega^{+}$ in \eqref{T3E0super} with 
$K$ sufficiently large. 
We then modify the function $\Omega^{+}$ so that the
constant $K$ becomes the polynomial $\left\vert y\right\vert^{m}$ 
for large values of $\left\vert y\right\vert$.
Since the main terms in the operator $L$ for large values of $\left\vert y\right\vert $
are $2^{-1}y\cdot\nabla\Omega$ and $-\Omega$, the modified function
$\bar{\Omega}^{+}$ satisfies $L( \bar{\Omega}^{+}) \leq 0$. 
This modification is possible, because these leading terms yield positive contributions. 
The difference of two solutions of \eqref{Y1E3} satisfying \eqref{Y1E6a} may be estimated by
$\varepsilon\bar{\Omega}^{+}$ for $y \rightarrow \pm \mathbf{a}$ and for
$\left\vert y\right\vert =R$ with $\varepsilon>0$ arbitrarily small and $R>0$ large enough. 
A comparison argument then shows that the difference is bounded by 
$\varepsilon\bar{\Omega}^{+}$ in the regions $B_{R}\left(  0\right)  \setminus 
B_{\delta}\left(  \pm \mathbf{a} \right) $ for $\delta$ small. 
Taking the limit $\varepsilon\rightarrow0$ we know that both functions are the same, 
whence the uniqueness follows.
\end{proof}

\begin{remark}\label{RemAs}
Equation \eqref{Y1E3} suggests that $\Omega\left(  y\right)
\sim \varphi\left(  \theta\right) /\left\vert y\right\vert ^{2}$ as
$\left\vert y\right\vert \rightarrow\infty,$ for some function $\varphi\left(
\theta\right)$ whose precise formula does not seem easy to derive. However,
we will not attempt to compute this function in detail in this paper.
\end{remark}

We also need to study the function $Z$, a solution of \eqref{M1E1}, satisfying:
\begin{align}
Z\left(  y\right)   &  =o\left(  \frac{1}{\left\vert y \mp \mathbf{a}\right\vert^{4}}\right)
 \quad\text{ as } y \rightarrow \pm \mathbf{a},
\label{M1E2}\\
\left\vert Z\left(  y\right)  \right\vert  &  \leq\left\vert y\right\vert^{m}
\quad\text{ for }\left\vert y\right\vert \geq5\quad\text{ for some }m>0.
\label{M1E3}
\end{align}

\begin{lemma}
Suppose that $\left\vert \mathbf{a}\right\vert =2$. 
Let $\Omega,\ D_{1}$, and $D_{2}$ be as in Lemma \ref{Prop1}. 
Then there exists a unique solution of \eqref{M1E1} satisfying \eqref{M1E2} and \eqref{M1E3}. 
Its asymptotic behavior near
the singular points $\left\{  -\mathbf{a},\mathbf{a}\right\}  $ is given by:%
\begin{subequations}
\begin{align}
Z\left(  y\right)   &  \sim D_{1}\left[  -\frac{1}{2}\frac{1}{|y-\mathbf{a}%
|^{2}}+\frac{1}{8}\log |y-\mathbf{a}|  -\frac{1}{16}%
\frac{\left(  \mathbf{a}\cdot\left(  y-\mathbf{a}\right)  \right)  ^{2}%
}{\left\vert y-\mathbf{a}\right\vert ^{2}}+B\right]  \text{ as }%
y\rightarrow\mathbf{a},\label{Y4E6}\\
Z\left(  y\right)   &  \sim D_{2}\left[  -\frac{1}{2}\frac{1}{|y+\mathbf{a}%
|^{2}}+\frac{1}{8}\log |y+\mathbf{a}| -\frac{1}{16}%
\frac{\left(  \mathbf{a}\cdot\left(  y+\mathbf{a}\right)  \right)  ^{2}%
}{\left\vert y+\mathbf{a}\right\vert ^{2}}+B\right]  \text{ as }%
y\rightarrow-\mathbf{a} \label{Y4E7}%
\end{align}
for some constant $B\in\mathbb{R}.$
\end{subequations}
\end{lemma}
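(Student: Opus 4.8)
The plan is to imitate, almost verbatim, the proof of Lemma~\ref{Prop1}; the only genuinely new feature is a resonance that produces the logarithmic term in \eqref{Y4E6}, \eqref{Y4E7}. First I would build, near each of the points $\pm\mathbf a$, an explicit local approximate solution $\hat Z_j$ that reproduces the singular part of the source $2\Omega$. By Lemma~\ref{Prop1}, $2\Omega\sim 2D_1\bigl[\,|y-\mathbf a|^{-4}+\Psi_1(Y_1)+\Psi_2(Y_1)+\Psi_3(Y_1)+A\,\bigr]$ as $y\to\mathbf a$, and similarly near $-\mathbf a$. Writing $L$ near $Y_j=0$ as $\Delta+\tfrac{4}{|Y_j|^2}\,Y_j\cdot\nabla$ plus terms of lower order (the cancellation that makes the remaining first--order terms of lower order is precisely the one used in the proof of Lemma~\ref{Prop1} and rests on $|\mathbf a|=2$), I would solve the leading balance term by term: the contribution $-\tfrac12|Y_j|^{-2}$ reproduces the most singular part $2D_j|Y_j|^{-4}$; the logarithmic contribution $\tfrac18\log|Y_j|$ is forced because the source $2D_j\Psi_j$ carries a radial part of homogeneity $|Y_j|^{-2}$, which is exactly the homogeneity for which $\Delta+\tfrac{4}{|Y_j|^2}Y_j\cdot\nabla$ is resonant with the constant mode, so the particular solution acquires a logarithm; the remaining term $-\tfrac1{16}(\mathbf a\cdot Y_j)^2/|Y_j|^2$, together with further homogeneous corrections $\tilde\omega_j$ constructed by separation of variables exactly as the functions $\omega_j$ in \eqref{T3E3}, absorbs the angular and less singular pieces of $2D_j(\Psi_1+\Psi_2+\Psi_3)$. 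As in Lemma~\ref{Prop1}, one has to check that the residual logarithm coming from the lower--order part of $L$ applied to $\log|Y_j|$ is cancelled by a term of the form $|Y_j|^2\log|Y_j|$, which vanishes at $\pm\mathbf a$, so that no iterated logarithms arise and the asymptotics keep the one--logarithm form of \eqref{Y4E6}, \eqref{Y4E7}. Cutting these blocks off with the same $\eta$ as in \eqref{T3E0}, and using that (because $|\mathbf a|=2$) the supports of $\eta(Y_1)$ and $\eta(Y_2)$ only touch, one obtains compactly supported $\hat Z_j$ for which $F:=2\Omega-L(\hat Z_1+\hat Z_2)$ is bounded on all of $\mathbb R^2$: away from $\pm\mathbf a$ it equals $2\Omega$, which decays at infinity by Remark~\ref{RemAs}, and near $\pm\mathbf a$ it is bounded by construction.

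Next I would run the sub/supersolution and Dirichlet--exhaustion argument. Take $Z^{\pm}=\hat Z_1+\hat Z_2\pm K$ with a constant $K\ge\|F\|_{L^\infty(\mathbb R^2)}$; since $L(K)=-K$, these are a supersolution and a subsolution of \eqref{M1E1} in $\mathbb R^2\setminus\{\pm\mathbf a\}$, and on the boundary of each domain $D_{\delta,R}$ of \eqref{T3E2a} they lie above, respectively below, the data $\hat Z_j$ on $\partial B_\delta(\pm\mathbf a)$ and $0$ on $\partial B_R(0)$. Solving $L(Z_{\delta,R})=2\Omega$ in $D_{\delta,R}$ with these boundary data (unique solvability by \cite[Corollary~9.18]{GTr}) and comparing, one gets $Z^{-}\le Z_{\delta,R}\le Z^{+}$ uniformly in $\delta,R$; interior elliptic estimates and compactness then produce, along a subsequence $(\delta_\ell,R_\ell)\to(0,\infty)$, a solution $Z$ of \eqref{M1E1} with $Z^{-}\le Z\le Z^{+}$, hence $Z-\hat Z_1-\hat Z_2$ bounded and in particular $|Z(y)|\le|y|^m$ for $|y|\ge5$, which is \eqref{M1E3}.

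To extract the sharp asymptotics and verify \eqref{M1E2} I would repeat the Fourier analysis from the end of the proof of Lemma~\ref{Prop1}. Near $\mathbf a$ one has $\hat Z_2\equiv0$, so $\phi:=Z-\hat Z_1$ is bounded there and solves $L(\phi)=F$ with $F$ bounded; writing $Y_1=\rho_1(\cos\theta_1,\sin\theta_1)$ and expanding $\phi$ in Fourier modes in $\theta_1$, each coefficient solves a second--order ODE with indicial exponents $\alpha_n^{\pm}=-2\pm\sqrt{4+n^2}$ as in \eqref{T3E6} and bounded right--hand side. Boundedness of $\phi$ forces the $\rho_1^{\alpha_n^-}$ component to be absent, and since $\alpha_n^{+}>0$ for $n\neq0$ and $\alpha_0^{+}=0$, it follows that $\phi(y)$ tends to a constant, which we write as $D_1B$, as $y\to\mathbf a$, the slowest remainder being $O(\rho_1^{\sqrt5-2})$. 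Together with the explicit terms already contained in $\hat Z_1$ this yields \eqref{Y4E6}, and analogously \eqref{Y4E7} near $-\mathbf a$; in particular $Z=o(|y\mp\mathbf a|^{-4})$, i.e.\ \eqref{M1E2}, and the equality of the constant $B$ in the two expansions follows from the symmetry $y\mapsto-y$, exactly as for the constant $A$ in Lemma~\ref{Prop1}. Uniqueness is then immediate: the difference $w$ of two solutions satisfies $L(w)=0$ (the common source $2\Omega$ cancels), $w=o(|y\mp\mathbf a|^{-4})$ near $\pm\mathbf a$ and $|w|\le|y|^m$ at infinity; comparing with $\varepsilon\bar\Omega^{+}$, where $\bar\Omega^{+}$ is the polynomially growing supersolution of $L(\cdot)\le0$ from the uniqueness part of Lemma~\ref{Prop1} (which blows up like $|y\mp\mathbf a|^{-4}$ near $\pm\mathbf a$), gives $|w|\le\varepsilon\bar\Omega^{+}$ on every $D_{\delta,R}$, and letting $\varepsilon\to0$ gives $w\equiv0$.

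I expect the main obstacle to be the bookkeeping around the resonant logarithm: pinning down its coefficient and, above all, checking that the cascade of lower--order corrections needed to render $F$ bounded does not regenerate logarithms at higher order, so that \eqref{Y4E6}, \eqref{Y4E7} genuinely contain a single $\log$. A secondary point is verifying that the reduction of $L$ near $\pm\mathbf a$ to $\Delta+4|Y_j|^{-2}Y_j\cdot\nabla$ plus harmless lower--order terms goes through as in Lemma~\ref{Prop1}; this again uses $|\mathbf a|=2$ in an essential way and is what guarantees that the local profiles have exactly the homogeneities displayed in \eqref{Y4E6}, \eqref{Y4E7}.
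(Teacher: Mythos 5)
Your proposal is correct and follows essentially the same route as the paper: the same local profile $-\tfrac12|Y_1|^{-2}+\tfrac18\log|Y_1|-\tfrac1{16}(\mathbf a\cdot Y_1)^2/|Y_1|^2$ (the paper's auxiliary function $\tilde W$ in \eqref{Y2E1}, cut off by the same $\eta$), sub/supersolutions $\tilde W\pm K$, exhaustion on the domains $D_{\delta,R}$ with comparison and compactness, the Fourier-mode argument with exponents $\alpha_n^{\pm}$ to extract the constant $B$, symmetry for the equality of $B$ at the two points, and uniqueness by comparison with $\varepsilon\bar\Omega^{+}$, exactly as in Lemma \ref{Prop1}. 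The extra details you supply (the resonance explanation for the logarithm and the $\tilde\omega_j$-type corrections making the residual bounded) are just the "tedious but straightforward" computations the paper alludes to.
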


\begin{proof}
The proof is similar to the one of Lemma \ref{Prop1}. Due to the linearity
and the symmetry of the problem it is enough to consider the case $D_{1}=1,$
$D_{2}=0.$ We can obtain sub- and supersolutions of (\ref{M1E1}) with the help
of the auxiliary function:%
\begin{equation}
\tilde{W}\left(  y\right)  =\left[  -\frac{1}{2}\frac{1}{|Y_{1}|^{2}}+\frac
{1}{8}\log |Y_{1}|  +\frac{1}{8}-\frac{1}{16}\frac{\left(
\mathbf{a}\cdot Y_{1}\right)  ^{2}}{\left\vert Y_{1}\right\vert ^{2}}\right]
\eta\left(  Y_{1}\right)  , \label{Y2E1}%
\end{equation}
where $\eta\left(  \xi\right)  $ is a $C^{\infty}$ cutoff function as in
Lemma \ref{Prop1}. We then construct sub and supersolutions in the form:%
\[
Z^{+}\left(  y\right)  =\tilde{W}\left(  y\right)  + K, \qquad Z^{-}\left(
y\right)  =\tilde{W}\left(  y\right)  -K.
\]
The terms between the brackets in (\ref{Y2E1}) have been chosen in order to
balance terms of the function $\Omega\left(  y\right)  .$ This requires some
tedious, but otherwise straightforward computations. Arguing then as in the
proof of Lemma \ref{Prop1} we obtain the result.
\end{proof}

\bigskip

We also need to study the asymptotics of the function $\mathcal{W}_{1}$ in
(\ref{Y1E1a}), which solves the equation:
\begin{equation}
-\Delta\mathcal{W}_{1}=\Omega,\quad y\neq\pm\mathbf{a}. \label{Y3E6}%
\end{equation}
We obtain the following result:

\begin{lemma}\label{Prop3} 
Suppose that $\left\vert \mathbf{a}\right\vert =2$.
Let $\Omega$ be as in Lemma \ref{Prop1}. 
Then for every
$M_{1,\mathcal{W}_{1}}^{(\mathbf{a})}, M_{1,\mathcal{W}_{1}}^{(-\mathbf{a})}\in\mathbb{R}$, 
there exists at least one solution of \eqref{Y3E6} satisfying:%
\begin{subequations}
\begin{align}
&  \mathcal{W}_{1}\left(  y\right)  -D_{1}G\left(  y-\mathbf{a} \right)
-D_{1} M_{1,\mathcal{W}_{1}}^{\left(  \mathbf{a} \right)  }\log
\left\vert y-\mathbf{a}\right\vert  = O\left(  1\right)  \quad \text{ as }
y\rightarrow\mathbf{a},\label{Y3E7a}\\
&  \mathcal{W}_{1}\left(  y\right)  -D_{1}G\left(  y+ \mathbf{a} \right)
-D_{1} M_{1,\mathcal{W}_{1}}^{\left(  -\mathbf{a} \right)  }\log
\left\vert y+\mathbf{a}\right\vert  = O\left(  1\right)  \quad \text{ as
}y\rightarrow-\mathbf{a},\label{Y3E7b}\\
&  \lim_{\left\vert y\right\vert \rightarrow\infty}\frac{\left\vert
\mathcal{W}_{1}\left(  y\right)  \right\vert }{\left\vert y\right\vert }=0,
\label{Y3E7c}%
\end{align}
where
\begin{equation}
G\left(  Y\right)  =-\frac{1}{4\left\vert Y\right\vert ^{2}}-\frac{1}{8}
\left(  \log \left\vert Y\right\vert \right)  ^{2}
+ \frac{1}{32} \cos\left(  2\theta \right)  
\label{Y3E7bis}
\end{equation}
and where $\theta=\theta\left(  Y\right)  $ is the angle between the $y_{1}$
axis and $Y.$

Moreover, two arbitrary solutions of \eqref{Y3E6} satisfying
\eqref{Y3E7a}-\eqref{Y3E7c} differ by a constant. We have the following
asymptotics for $\mathcal{W}_{1}\left(  y\right)  $ as $y\rightarrow
\mathbf{a}$ and $y\rightarrow-\mathbf{a}:$%
\end{subequations}
\begin{subequations}
\begin{align}
&  \mathcal{W}_{1}\left(  y\right)  =D_{1}G\left(  y-\mathbf{a}\right)
+D_{1}M_{1,\mathcal{W}_{1}}^{\left(  \mathbf{a} \right)  }\log
\left\vert y-\mathbf{a}\right\vert - \frac{D_{1}}{2^{7}\cdot
3}\left\vert y-\mathbf{a}\right\vert \cos\left(  3\theta_{\left(  \mathbf{a}
\right)  }\right)  +\nonumber\\
&  \qquad\qquad\qquad+ A_{1}^{\left(  \mathbf{a} \right)  }+D_{1}%
K_{1}^{\left(  \mathbf{a} \right)  }\cdot\left(  y-\mathbf{a} \right)
+O\left(  \left\vert y-\mathbf{a} \right\vert ^{2}\right)  ,\label{Y3E8}\\
&  \mathcal{W}_{1}\left(  y\right)  =D_{2}G\left(  y+\mathbf{a} \right)
+D_{2}M_{1,\mathcal{W}_{1}}^{\left(  -\mathbf{a}\right)  }\log
\left\vert y+\mathbf{a} \right\vert - \frac{D_{2}}{2^{7}\cdot
3}\left\vert y+\mathbf{a} \right\vert \cos\left(  3\theta_{\left(  -\mathbf{a}
\right)  }\right)  +\nonumber\\
&  \qquad\qquad\qquad+A_{1}^{\left(  -\mathbf{a} \right)  }+ D_{2}%
K_{1}^{\left(  -\mathbf{a}\right)  }\cdot\left(  y+\mathbf{a}\right)
+O\left(  \left\vert y+\mathbf{a}\right\vert ^{2}\right)  , \label{Y3E9}%
\end{align}
where $\theta_{\left(  \mathbf{a} \right)  }, \theta_{\left(  -\mathbf{a}
\right)  }$ are the angles between the horizontal axis and the vectors
$y - \mathbf{a}$ and $y + \mathbf{a}$ respectively. 
The vectors $K_{1}^{(\mathbf{a})},\ K_{1}^{( -\mathbf{a} )}\in\mathbb{R}^{2}$ and the constants
$A_{1}^{( \mathbf{a} )},\ A_{1}^{( -\mathbf{a})}\in\mathbb{R}$ 
depend on an affine manner on the values of $M_{1,\mathcal{W}_{1}}^{(\mathbf{a})},\ 
M_{1,\mathcal{W}_{1}}^{(-\mathbf{a})}.$
\end{subequations}
\end{lemma}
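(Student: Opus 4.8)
The plan is to adapt the barrier-and-compactness argument from the proof of Lemma~\ref{Prop1} to the linear Poisson equation \eqref{Y3E6} with the (non-explicit) right-hand side $\Omega$. Since \eqref{Y3E6} is linear and the problem of Lemma~\ref{Prop1} is linear in $(D_{1},D_{2})$, it suffices to construct the solution; I would treat the two singular points separately, reducing by superposition to, say, $D_{2}=0$ (the case $D_{1}=0$ being the reflection of it). As before I write $Y_{1}=y-\mathbf{a}$, $r_{1}=|Y_{1}|$, and $\theta_{(\mathbf{a})}$ for the polar angle of $Y_{1}$.

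A preliminary step is to sharpen the control of $\Omega$ at infinity. Equation \eqref{Y1E3} forces $\Omega(y)\sim\varphi(\theta)|y|^{-2}$ as $|y|\to\infty$ (cf.~Remark~\ref{RemAs}); quantifying this by comparison with $|y|^{-2}$, using that the leading part of $L$ for large $|y|$ is $\Delta-\tfrac12 y\cdot\nabla-1$ while the drift $\tfrac{4(y\mp\mathbf{a})}{|y\mp\mathbf{a}|^{2}}\cdot\nabla$ is a lower-order perturbation, improves \eqref{Y1E6b} to $|\Omega(y)|\le C|y|^{-2}$ for $|y|$ large. This decay is what makes the sublinear bound \eqref{Y3E7c} available: solving the Poisson equation mode by mode in an exterior region, the zeroth Fourier mode contributes a term of size $(\log|y|)^{2}=o(|y|)$ and every other mode only a bounded contribution.

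Next I would isolate the near-field singularity. Using the expansion of $\Omega$ near $\mathbf{a}$ from Lemma~\ref{Prop1}, $\Omega=D_{1}[\,r_{1}^{-4}+\Psi_{1}(Y_{1})+\Psi_{2}(Y_{1})+\Psi_{3}(Y_{1})+A\,]+\phi$ with $\phi$ H\"older continuous near $\mathbf{a}$ (the bound $|\phi-A|\le Cr_{1}^{\sqrt5-2}$ from the proof of Lemma~\ref{Prop1}), together with the identities $\Delta(\rho^{\alpha}e^{\mathrm{i}k\theta})=(\alpha^{2}-k^{2})\rho^{\alpha-2}e^{\mathrm{i}k\theta}$, $\Delta((\log\rho)^{2})=2\rho^{-2}$, and the resonant identity $\Delta(\rho^{2}\log\rho\,\cos2\theta)=4\cos2\theta$, I would invert $-\Delta$ on this expansion term by term and collect the result into an auxiliary function $H$, cut off away from $\{\mathbf{a},-\mathbf{a}\}$ by the cutoff $\eta$ of Lemma~\ref{Prop1}. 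One checks directly that $-\Delta\bigl(D_{1}G(Y_{1})\bigr)=D_{1}\bigl(r_{1}^{-4}+\Psi_{1}(Y_{1})\bigr)$ with $G$ as in \eqref{Y3E7bis}; that the term $D_{1}\Psi_{2}(Y_{1})=-D_{1}\tfrac{\cos 3\theta_{(\mathbf{a})}}{48\,r_{1}}$ is inverted by $-\tfrac{D_{1}}{2^{7}\cdot3}\,r_{1}\cos 3\theta_{(\mathbf{a})}$; and that the homogeneity-zero and order-$r_{1}^{2}$ pieces of $D_{1}(\Psi_{3}+A)$ are inverted by explicit polynomial (and, in the resonant $\cos2\theta$ channel, $r_{1}^{2}\log r_{1}$) terms to be absorbed into the $O(|y-\mathbf{a}|^{2})$ remainder; I would also include in $H$ the free multiple $D_{1}M_{1,\mathcal{W}_{1}}^{(\mathbf{a})}\log r_{1}$ of the harmonic function $\log r_{1}$ (and the analogous terms near $-\mathbf{a}$, with the sign reversal in $\Psi_{2}$). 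The upshot is that $F:=\Omega+\Delta H$ is H\"older continuous near each singular point and coincides with $\Omega$---hence decays like $|y|^{-2}$---outside a compact set.

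It then remains to solve $-\Delta v=F$ with $v$ bounded near $\pm\mathbf{a}$ and $v=o(|y|)$, and to put $\mathcal{W}_{1}=v+H$. Following the proof of Lemma~\ref{Prop1}, I would solve the Dirichlet problems $-\Delta v_{\delta,R}=F$ on the domains $D_{\delta,R}$ of \eqref{T3E2a} with data matched to the explicit near-field on $\partial B_{\delta}(\pm\mathbf{a})$ and to the exterior mode-by-mode solution on $\partial B_{R}$, bracket them by fixed sub- and supersolutions built from $|F|$ and a $(\log|y|)^{2}$-type comparison function, and pass to the limit via interior elliptic estimates and a diagonal extraction; the limit solves $-\Delta v=F$ on $\mathbb{R}^{2}\setminus\{\mathbf{a},-\mathbf{a}\}$ and keeps the boundedness near $\pm\mathbf{a}$ and the sublinear growth, which gives \eqref{Y3E7a}--\eqref{Y3E7c}, with $A_{1}^{(\pm\mathbf{a})},K_{1}^{(\pm\mathbf{a})}$ depending affinely on $M_{1,\mathcal{W}_{1}}^{(\pm\mathbf{a})}$ because $H$ does. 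Uniqueness up to an additive constant is Liouville: the difference of two solutions with the same prescribed data is harmonic on $\mathbb{R}^{2}\setminus\{\mathbf{a},-\mathbf{a}\}$, bounded near $\pm\mathbf{a}$ since the singular parts $D_{j}G(\cdot\mp\mathbf{a})+D_{j}M_{1,\mathcal{W}_{1}}^{(\pm\mathbf{a})}\log r_{j}$ cancel (so the singularities are removable), hence entire, and $o(|y|)$, hence constant. The sharp asymptotics \eqref{Y3E8}, \eqref{Y3E9} then follow from the Fourier-mode/Schauder argument at the end of the proof of Lemma~\ref{Prop1}: $F$ being H\"older near $\mathbf{a}$ gives $v\in C^{2,\alpha}$ there, so $v(y)=A_{1}^{(\mathbf{a})}+D_{1}K_{1}^{(\mathbf{a})}\cdot(y-\mathbf{a})+O(|y-\mathbf{a}|^{2})$, and adding $H$ reproduces the claimed expansion. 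The main obstacle, exactly as in Lemma~\ref{Prop1}, is that $\Omega$ is not available in closed form, so the computations that would be explicit in the radial case of \cite{V1} must be replaced by the barrier construction; within that, the delicate bookkeeping is to arrange $H$ so that $F$ is genuinely H\"older near $\pm\mathbf{a}$---which requires inverting $-\Delta$ against every homogeneous contribution of $\Omega$ down through order $r_{j}^{0}$, including the resonant $\cos2\theta$ mode---while simultaneously keeping the $|y|^{-2}$ decay of $F$ and the sublinearity of the barriers at infinity.
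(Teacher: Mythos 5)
Your proposal is correct and follows essentially the same route as the paper's (very brief) proof: explicitly invert $-\Delta$ on the singular terms of the expansions \eqref{T2E5b}, \eqref{T2E5} (which is exactly what produces $G$ and the $r\cos 3\theta_{(\mathbf{a})}$ term), reduce to a Poisson problem with a source bounded by $C/(1+|y|^{2})$, handle it with a $(\log|y|)^{2}$-type barrier and the approximation/compactness scheme of Lemma \ref{Prop1}, and obtain uniqueness up to constants from Liouville's theorem. The only difference is that you make explicit two points the paper leaves implicit — the $|y|^{-2}$ decay of $\Omega$ at infinity needed for the $C/(1+|y|^{2})$ bound, and the resonant $\cos 2\theta$ channel producing an $r^{2}\log r$ contribution — both consistent with the paper's argument.
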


\begin{proof}
The proof is similar to the one of Lemma \ref{Prop1}. It reduces just to
compute explicitly the solutions of the Poisson equation having as sources the
terms in the asymptotics \eqref{T2E5b}, \eqref{T2E5}. After removing the
effect of these singular contributions, it only remains to obtain a solution
of Poisson equation with a source term bounded as $C/(1+\left\vert y\right\vert ^{2})$. 
This can be made using a supersolution behaving as
$C\left(  \log\left(  \left\vert y\right\vert \right)  \right)  ^{2}$ for
large values of $\left\vert y\right\vert .$ The uniqueness result is a
consequence of Liouville's theorem for the Laplace equation.
\end{proof}

\section{Matching of the different terms}

In this Section we match the different terms in the inner and outer expansions
and consequently derive evolution equations for the functions $\varepsilon
_{\ell}\left(  \tau\right)  $ providing the width of the peaks. We will assume
in the following that, due to symmetry considerations, all the functions
$\varepsilon_{\ell}$ at the different peaks are the same. In general this does
not need to be so. Moreover, there are non-symmetric singular self-similar
solutions (cf.~Section \ref{Asympt}) for which the corresponding values of the
functions $\varepsilon_{\ell}\left(  \tau\right)  $ cannot be expected to be
the same. The question of determining the relative sizes of the functions
$\varepsilon_{\ell}\left(  \tau\right)  $ is interesting, but it will not be
considered in this paper. Due to (\ref{S4E7}) this question is equivalent to
determining the relative sizes of the maximum value of the function $u$ at
each of the different peaks (Notice however that all of them have the same
mass $8\pi$).

We now describe how to match the different terms in the asymptotics as
$\left\vert \xi\right\vert \rightarrow\infty$ of the expansions (\ref{S5E7}),
(\ref{S5E8}). We begin with the leading order terms. Since we restrict our
analysis to the case of two peaks we assume in the following that
$\varepsilon_{1}=\varepsilon_{2}=\varepsilon.$ We write for further reference
the expansion of $\nabla_{y}\mathcal{W}_{0}$ near $y=\mathbf{a}$
(cf.\thinspace\eqref{Y2E4}):
\begin{align}
\nabla_{y}\mathcal{W}_{0}  &  =-\frac{4Y}{|Y|^{2}}-\frac{2\mathbf{a}%
}{\left\vert \mathbf{a}\right\vert ^{2}}-\frac{Y}{\left\vert \mathbf{a}%
\right\vert ^{2}}+\frac{2\mathbf{a}\left(  \mathbf{a}\cdot Y\right)
}{\left\vert \mathbf{a}\right\vert ^{4}}+\frac{|Y|^{2}\mathbf{a}}{2\left\vert
\mathbf{a}\right\vert ^{4}}-\frac{2\left(  \mathbf{a}\cdot Y\right)
^{2}\mathbf{a}}{\left\vert \mathbf{a}\right\vert ^{6}}+\frac{\left(
\mathbf{a}\cdot Y\right)  Y}{\left\vert \mathbf{a}\right\vert ^{4}%
}-\nonumber\\
&  -\frac{\left(  \mathbf{a}\cdot Y\right)  |Y|^{2}\mathbf{a}}{\left\vert
\mathbf{a}\right\vert ^{6}}-\frac{|Y|^{4}\mathbf{a}}{16\left\vert
\mathbf{a}\right\vert ^{6}}+\frac{2\left(  \mathbf{a}\cdot Y\right)
^{3}\mathbf{a}}{\left\vert \mathbf{a}\right\vert ^{8}}+\frac{\left\vert
Y\right\vert ^{2}Y}{4\left\vert \mathbf{a}\right\vert ^{4}}-\frac{\left(
\mathbf{a}\cdot Y\right)  ^{2}Y}{\left\vert \mathbf{a}\right\vert ^{6}%
}-...\ \label{WTaylo}%
\end{align}
where $Y=y-\mathbf{a}$ and we have kept in this formula all the terms until
third order in $\left\vert Y\right\vert $.

\subsection{Leading terms.}

The leading order in (\ref{S5E7}), (\ref{S5E8}) is respectively given by the functions
$u_{s}\left(  \xi\right)  ,\ v_{s}\left(  \xi\right)  $. We will denote as
$\Phi_{0,match},\ W_{0,match}$ the terms to be matched in the intermediate
region $\left\vert \xi\right\vert \gg1,\left\vert y-y_{\ell}\right\vert \ll1$
due to these terms in the expansion. Keeping just terms of order
$\varepsilon_{\ell}^{2}$ $\left(  w.l.a\right)  $ in the region where
$\left\vert y-y_{\ell}\right\vert $ becomes of order one, we then obtain:
\begin{equation}
\Phi_{0,match}\left( y,\tau \right)
=\frac{8\varepsilon_{\ell}^{2}}{\left\vert y-\bar{y}_{\ell}\right\vert ^{4}},
\quad
\nabla_{y} W_{0,match}\left( y,\tau \right)
= -\frac{4\left( y-\bar{y}_{\ell}\right)  }{\left\vert y-\bar{y}_{\ell}\right\vert ^{2}}.
\label{Y2E5}
\end{equation}
The matching of the term $\nabla_{y}W_{0,match}$ has been already taken into
account in the derivation of (\ref{Y2E4}) that gives the asymptotics of the
chemical field for $\left\vert y\right\vert $ of order one up to corrections
of order $\varepsilon_{\ell}^{2}.$ On the other hand, due to \eqref{T2E5b},
\eqref{T2E5} we obtain the matching of (\ref{Y2E5}) with (\ref{Y1E1}),
assuming $D_{1}=8.$

\subsection{Terms coming from $U_{1},\ W_{1}.$}

We now match the terms $U_{1},\ W_{1}$ in (\ref{S5E7}), (\ref{S5E8}) with
suitable terms in (\ref{Y1E1}), (\ref{Y1E1a}), respectively. We denote as $\Phi
_{1,match},\ W_{1,match}$ the terms to be matched in the intermediate region
$\left\vert \xi\right\vert \gg1,\ \left\vert y-\bar{y}_{\ell}\right\vert \ll1$
due to these terms in the expansion. Notice that (\ref{U2E2a}) shows:
\[
\Phi_{1,match}\left( y,\tau \right) =0,
\quad
\nabla_{y} W_{1,match}\left( y,\tau \right)
= - \frac{\bar{y}_{\ell}}{2}.
\]
We only need to match the term $\nabla_{y} W_{1,match}$ with some of
the terms in (\ref{Y2E4}). 
Let $\lim_{\tau\rightarrow\infty}\bar{y}_{\ell}=\bar{y}_{\ell}=\mathbf{a},$ since the case
$\lim_{\tau\rightarrow\infty}\bar{y}_{\ell}=-\mathbf{a}$ can be treated in a symmetric way. The most
singular term of \eqref{Y2E4} has been matched with 
$\nabla_{y} W_{0,match}.$ 
The next order in the expansion of $\nabla_{y}\mathcal{W}_{0}$ is
$- 2\mathbf{a}/\left\vert \mathbf{a}\right\vert^{2}$ (cf.\thinspace
(\ref{WTaylo})) and this matches with $-\bar{y}_{\ell}/2 =-\mathbf{a}/2$ 
if we impose $\left\vert \mathbf{a}\right\vert =2$. 
Therefore matching of the terms of order
$\varepsilon$ $\left(  w.l.a\right)$ in the region where $\left\vert
\xi\right\vert $ is of order one becomes possible if we impose that the drift
terms due to the change to the self-similar variables and the chemotactic
terms balance with each other.

\subsection{Terms coming from $U_{2},\ W_{2}.$}

Let us denote as $\Phi_{2,match},\ W_{2,match}$ the terms appearing in the
matching condition arising from the terms $U_{2},\ W_{2}$ in the inner
expansion. Using (\ref{S6E10a1}), (\ref{S6E10b1}), (\ref{U2E1}), and
(\ref{M2E2}) we obtain the following formulas in the intermediate region
$\varepsilon_{\ell}\ll\left\vert y-\bar{y}_{\ell}\right\vert \ll1:$
\begin{subequations}
\begin{align}
\Phi_{2,match}\left(  y,\tau\right)   &  \sim\left(  2\varepsilon_{\ell
}\varepsilon_{\ell,\tau}-\varepsilon_{\ell}^{2}\right)  \left[  -\frac
{2}{\left\vert y-\bar{y}_{\ell}\right\vert ^{2}}+O\left(  \frac{\varepsilon
_{\ell}^{2}}{\left\vert y-\bar{y}_{\ell}\right\vert ^{4}}\right)  \right]
+\label{Y3E1}\\
&  +\frac{8B_{2,3}\cos\left(  2\theta\right)  }{\left\vert y-y_{\ell
}\right\vert ^{2}}+\frac{8\bar{B}_{2,3}\sin\left(  2\theta\right)
}{\left\vert y-\bar{y}_{\ell}\right\vert ^{2}}+...\ \left(  w.l.a\right)
\nonumber\\
W_{2,match}\left(  y,\tau\right)   &  \sim\bar{y}_{\ell,\tau}\cdot\left(
y-\bar{y}_{\ell}\right)  +V_{2,1}+\frac{B_{2,3}}{\varepsilon_{\ell}^{2}%
}\left\vert y-\bar{y}_{\ell}\right\vert ^{2}\cos\left(  2\theta\right)
+\frac{\bar{B}_{2,3}}{\varepsilon_{\ell}^{2}}\left\vert y-\bar{y}_{\ell
}\right\vert ^{2}\sin\left(  2\theta\right)  +..., \label{Y3E2}%
\end{align}
where $V_{2,1}$ is a radial term. It is more convenient to rewrite
(\ref{Y3E2}) in cartesian coordinates:
\end{subequations}
\begin{multline}
W_{2,match}\left(  y,\tau\right)  \sim\bar{y}_{\ell,\tau}\cdot\left(
y-\bar{y}_{\ell}\right)  +V_{2,1}+\frac{B_{2,3}}{\varepsilon_{\ell}^{2}%
}\left[  \left(  y_{1}-\bar{y}_{\ell,1}\right)  ^{2}-\left(  y_{2}-\bar
{y}_{\ell,2}\right)  ^{2}\right]  +\\
+\frac{2\bar{B}_{2,3}}{\varepsilon_{\ell}^{2}}\left(  y_{1}-\bar{y}_{\ell
,1}\right)  \left(  y_{2}-\bar{y}_{\ell,2}\right)  +... \label{Y3E3}%
\end{multline}
The last two terms of this formula can be matched with the quadratic terms of
the expansion of $\nabla_{y} \mathcal{W}_{0}(y)$ near the points $\bar{y}_{\ell}.$ Using
(\ref{WTaylo}) it follows that $\nabla W_{2,match}$ matches with $\nabla_{y}\mathcal{W}_{0}(y)$ if
\begin{equation}
B_{2,3}=\frac{\varepsilon_{\ell}^{2}}{8},\quad\bar{B}_{2,3}=0,
\label{Y3E4}
\end{equation}
where we use that $\left\vert \mathbf{a}\right\vert =2.$ Using (\ref{Y3E4}) in
(\ref{Y3E1}) and transforming the resulting formula to cartesian coordinates
we obtain:
\begin{equation}
\Phi_{2,match}\left(  y,\tau\right)  \sim\varepsilon_{\ell}^{2}\left[
\frac{3\left(  y_{1}-2\right)  ^{2}+\left(  y_{2}\right)  ^{2}}{\left\vert
y-\bar{y}_{\ell}\right\vert ^{4}}\right]  -\frac{4\varepsilon_{\ell
}\varepsilon_{\ell,\tau}}{\left\vert y-\bar{y}_{\ell}\right\vert ^{2}%
}+O\left(  \frac{\varepsilon_{\ell}^{4}}{\left\vert y-\bar{y}_{\ell
}\right\vert ^{4}}\right)  ...\ \left(  w.l.a\right)  
\label{Y3E5}%
\end{equation}
and the first term in (\ref{Y3E5}) matches exactly with the term in the outer
region multiplying $\Psi_{1}\left(  y-\mathbf{a}\right)$ in (\ref{T2E5b})
due to the fact that $D_{1}=8.$

It is illuminating to compute $\bar{y}_{\ell,\tau}$ in the first term of
(\ref{Y3E3}), matching the first term on the right-hand side of this formula
with one of the terms in the outer expansion (\ref{Y1E1a}). We will examine
the case in which $\lim_{\tau\rightarrow\infty}\bar{y}_{\ell}=\mathbf{a},$
since the case in which $\lim_{\tau\rightarrow\infty}\bar{y}_{\ell}=-\mathbf{a}$ is similar. 
Using (\ref{Y1E1a}), (\ref{Y3E8}) as well as the
fact that $D_{1}=8$ we obtain the following terms in the outer expansion of
$\nabla_{y}W$ which require to be matched with terms from the inner
expansion:
\begin{multline}
\varepsilon_{\ell}^{2} \Bigg[  \frac{4Y}{\left\vert Y\right\vert ^{4}}%
-2 \left( \log \left\vert Y\right\vert \right) \frac{Y}{\left\vert
Y\right\vert ^{2}}-\frac{1}{2}\sin\left(  2\theta_{\left(  \mathbf{a}\right)
}\right)  \frac{Y^{\perp}}{\left\vert Y\right\vert ^{2}}+\frac
{8M_{1,\mathcal{W}_{1}}^{\left(  \mathbf{a}\right)  }Y}{\left\vert
Y\right\vert ^{2}} + \\
 + 8K_{1}^{\left(  \mathbf{a}\right)  }%
-\frac{\cos\left(  3\theta_{\left(  \mathbf{a}\right)  }\right)  }{2^{4}%
\cdot3}\frac{Y}{\left\vert Y\right\vert }+\frac{\sin\left(  3\theta_{\left(
\mathbf{a}\right)  }\right)  }{2^{4}}\frac{Y^{\perp}}{\left\vert Y\right\vert
}+O\left(  \left\vert Y\right\vert \right)  \Bigg],
\label{Y4E1a}
\end{multline}
where $Y=y-\mathbf{a}$ and we define $Y^{\perp}=\left(  -y_{2},y_{1}\right)$
for $Y=\left(  y_{1},y_{2}\right) \in \mathbb{R}^2$.

The first term in (\ref{Y4E1a}) matches with a similar term coming from the
function $v_{s}$ in (\ref{S4E2}) using Taylor series as $\left\vert
\xi\right\vert \rightarrow\infty.$ The second term in (\ref{Y4E1a}) matches,
to the leading order, with the first term on the right-hand side of
(\ref{S6E10b1}). The third term in (\ref{Y4E1a}) matches with the first
corrective term that results in the Taylor expansion of $V_{2,3}$ in
(\ref{M2E2}) as $\left\vert \xi\right\vert \rightarrow\infty$. Notice that we
use also in this matching (\ref{Y3E4}). 
The matching of the term
$8M_{1,\mathcal{W}_{1}}^{\left(  \mathbf{a}\right)  }Y /\left\vert Y\right\vert ^{2}$ 
plays a relevant role in determining $\bar{y}_{\ell,\tau}.$ 
Indeed, the contributions of similar order in the inner region are due to
the terms $\log\left( r^{2}\right) /r$ and $-2/r$ in \eqref{S6E10b1}. 
Due to the change of variables 
$r=\left\vert \xi\right\vert = \left\vert Y\right\vert /\varepsilon_{\ell}$ 
it follows that, to the
leading order $8M_{1,\mathcal{W}_{1}}^{\left( \mathbf{a}\right)}
=\log \varepsilon_{\ell}$. 
Lemma \ref{Prop3} thus yields 
$K_{1}^{\left(  \mathbf{a}\right)  } 
=B_{1}^{\left( \mathbf{a}\right)}\log \varepsilon_{\ell}$ 
as $\tau \rightarrow\infty$ to the leading order. 
We can then match the term
$8K_{1}^{\left(  \mathbf{a}\right)  }\varepsilon_{\ell}^{2}$ in (\ref{Y4E1a})
with the first term in (\ref{Y3E3}), whence:
\[
\bar{y}_{\ell,\tau} \sim B_{1}^{\left(  \mathbf{a}\right)  }\varepsilon_{\ell}
^{2}\left( \tau \right)\log\left(  \varepsilon_{\ell}\left( \tau \right) \right) ,
\quad
\bar{y}_{\ell} \sim \mathbf{a}
-B_{1}^{\left(  \mathbf{a}\right)  }\int_{\tau}^{\infty}\varepsilon_{\ell}%
^{2}\left(  s\right)  \log\left(  \varepsilon_{\ell}\left(  s\right)  \right)
ds
\quad \text{ as } \tau\rightarrow\infty.
\]
This gives the desired asymptotic formula of the
peaks stated in (\ref{S4E5}). The terms with the angular dependence
$3\theta$ in (\ref{Y4E1a}) are matched with some of the high order
corrections coming from (\ref{M2E1}). However, this terms give smaller
contributions and we do not pursue this computation in detail.

\subsection{Terms coming from $U_{3},\ W_{3}.$}

We now match the terms coming from $U_{3},\ W_{3}$ which can be computed by
means of (\ref{M1E7a}), (\ref{M2E1}). We notice that, to the leading order,
$U_{3}$ must match with the term $\varepsilon_{\ell}^{2}\Psi_{2}\left(
Y\right)  $ in (\ref{T2E5b}), (\ref{T2E5}). 
Using that $D_{1}\Psi_{2}\left( Y\right)  =- 6^{-1}\cos\left(  3\theta \right) /\left\vert
Y\right\vert$ (since $D_{1}=8$) we can match this term with the leading
matching term coming from $U_{3}$, which can be written as (cf.~\eqref{M2E1}):
\[
\Phi_{3,match}\left(  y\right)  =\frac{16B_{3}}{\varepsilon_{\ell}}\frac
{\cos\left(  3\theta_{\left(  \mathbf{a}\right)  }\right)  }{\left\vert
y-\mathbf{a}\right\vert }%
\]
whence:
\begin{equation}
B_{3}=-\frac{\varepsilon_{\ell}^{3}}{2^{5}\cdot3},\quad\bar{B}_{3}=0.
\label{Y4E3}%
\end{equation}

We see that this gives also a matching for the terms in (\ref{WTaylo})
with angular dependence $\cos ( 3\theta_{( \mathbf{a})}),$
$\sin (  3\theta_{(\mathbf{a})})$.
Using that $\left\vert \mathbf{a}\right\vert =2$ we can write those terms as:%
\begin{equation}
\frac{|Y|^{2}\mathbf{a}}{2\left\vert \mathbf{a}\right\vert ^{4}}%
-\frac{2\left(  \mathbf{a}\cdot Y\right)  ^{2}\mathbf{a}}{\left\vert
\mathbf{a}\right\vert ^{6}}+\frac{\left(  \mathbf{a}\cdot Y\right)
Y}{\left\vert \mathbf{a}\right\vert ^{4}}=\frac{|Y|^{2}}{2^{4}}\left[  \left(
-\cos\left(  2\theta\right)  ,\sin\left(  2\theta\right)  \right)  \right]  .
\label{Y4E4}%
\end{equation}

On the other hand, we can compute two terms of the asymptotics of
$\nabla_{\xi}(  V_{3}(  r )  \cos ( 3\theta )$ 
as $\left\vert \xi\right\vert \rightarrow\infty$ using Taylor series. 
Rewriting the resulting expansion using the $y-$variable we obtain the
following terms to be matched from the inner expansion:
\begin{equation}
\frac{6B_{3}}{\varepsilon_{\ell}}r^{2}\left(  \cos\left(  2\theta\right)
,-\sin\left(  2\theta\right)  \right)  +\frac{B_{3}}{\varepsilon_{\ell}%
}\left(  2\cos\left(  3\theta\right)  \frac{Y}{|Y|}-6\sin\left(
3\theta\right)  \frac{Y^{\perp}}{|Y|}\right)  . \label{Y4E5}%
\end{equation}
Using (\ref{Y4E3}) we obtain that the first term in (\ref{Y4E5}) matches with
the term in (\ref{Y4E4}) and the second one matches with the terms in
(\ref{Y4E1a}) with angular dependence $3\theta.$

\subsection{\label{Match4}Terms coming from $U_{4},W_{4}$}

We now match the asymptotics as $\left\vert \xi\right\vert
\rightarrow\infty$ in the terms $U\left(  \xi,\tau\right)  ,\ V\left(
\xi,\tau\right)  $ with the terms in the outer expansions (\ref{Y1E1}),
(\ref{Y1E1a}) that are of order $\varepsilon_{\ell}^{2}$ $\left(
w.l.a\right)  $ as $y\rightarrow\pm\mathbf{a}.$ These are terms in the outer
expansion multiplying $\Psi_{3}\left(  Y\right)  $ and $A$ in (\ref{T2E5b}),
(\ref{T2E5}) as well as the terms multiplying 
$16^{-1}\left( \mathbf{a}\cdot\left(  y-\mathbf{a}\right)  \right)^{2}/\left\vert
y-a\right\vert^{2}$ and $B$ in (\ref{Y4E6}), (\ref{Y4E7}). Therefore, using
also that $D_{1}=8,$ we obtain that the outer expansion for $\Phi$ to be
matched as $y\rightarrow\mathbf{a}$ is:
\begin{equation}
\frac{\varepsilon_{\ell}^{2}}{2^{5}}\frac{\left(  \mathbf{a}\cdot Y\right)
^{4}}{\left\vert Y\right\vert ^{4}}+8A\varepsilon_{\ell}^{2}-\frac
{\varepsilon_{\ell}\varepsilon_{\ell,\tau}}{2}\frac{\left(  \mathbf{a}\cdot
Y\right)  ^{2}}{\left\vert Y\right\vert ^{2}}+8B\varepsilon_{\ell}%
\varepsilon_{\ell,\tau}+\varepsilon_{\ell}\varepsilon_{\ell,\tau}\log |Y|,
\label{Y4E8}
\end{equation}
where $Y=y-\mathbf{a}.$

Concerning the inner expansion we notice that the only radial terms giving
contributions of order $\varepsilon_{\ell}^{2}$ $\left(  w.l.a\right)  $ in
the matching region are the terms $U_{4,1}+U_{4,2,1}$. 
Using (\ref{Z1E1}), (\ref{Z1E2}), and (\ref{Z2E9}) as well as the change of variables we
obtain the following radial terms for $\Phi$ to be matched:
\begin{equation}
-\left(  2\varepsilon_{\ell}\varepsilon_{\ell,\tau}
-\varepsilon_{\ell}^{2}\right)_{\tau} \left(  \frac{\log |Y|}{2}
-\frac{\log \varepsilon_{\ell}}{2} -\frac{5}{8}\right)
+\frac{\left(  2\varepsilon_{\ell}\varepsilon_{\ell,\tau}-\varepsilon_{\ell
}^{2} \right)  ^{2}}{4\varepsilon_{\ell}^{2}}
+\frac{\varepsilon_{\ell}^{2}}{2^{5}}, 
\label{Y4E9}
\end{equation}
where we have used (\ref{Y3E4}). On the other hand, we can decompose the terms
in (\ref{Y4E8}) in radial terms and in terms with angular dependences
$\cos\left(  2\theta\right)  $ and $\cos\left(  4\theta\right)  $. Using also
that $\left\vert \mathbf{a} \right\vert =2$ we observe that the radial terms
are:
\[
\frac{3\varepsilon_{\ell}^{2}}{16}+8A\varepsilon_{\ell}^{2} +\left(
8B-1\right)  \varepsilon_{\ell}\varepsilon_{\ell,\tau}+\varepsilon_{\ell}
\varepsilon_{\ell,\tau}\log |Y|.
\]

We notice that the term containing $\log |Y|$ can be matched, to the leading order, 
with a similar term in (\ref{Y4E9}). 
On the other hand, the matching of the remaining terms provides an equation for
$\varepsilon_{\ell}$ in the same manner as in \cite{V1}:
\begin{multline}
\frac{\left(  2\varepsilon_{\ell}\varepsilon_{\ell,\tau}-\varepsilon_{\ell
}^{2}\right)  _{\tau}}{2} \log \varepsilon_{\ell} 
+\frac{5}{8}\left(  2\varepsilon_{\ell}\varepsilon_{\ell,\tau}
-\varepsilon_{\ell}^{2}\right)_{\tau} 
+\frac{\left(  2\varepsilon_{\ell}\varepsilon_{\ell,\tau}
-\varepsilon_{\ell}^{2}\right)  ^{2}}{4\varepsilon_{\ell}^{2}}
+\frac{\varepsilon_{\ell}^{2}}{2^{5}}\\
=\frac{3\varepsilon_{\ell}^{2}}{16}+8A\varepsilon_{\ell}^{2}+\left(
8B-1\right)  \varepsilon_{\ell}\varepsilon_{\ell,\tau}. 
\label{Y5E1}
\end{multline}

We now consider the matching of the terms with angular dependence $\cos\left(
2\theta\right)  .$ The terms in the outer region (cf.\thinspace(\ref{Y4E8}))
with such dependence are:
\begin{equation}
\left(  \frac{\varepsilon_{\ell}^{2}}{4}-\varepsilon_{\ell}\varepsilon
_{\ell,\tau}\right)  \cos\left(  2\theta\right)  . \label{Y5E2}%
\end{equation}
This term must be matched with the contributions due to $U_{4,2,2}.$ Using
(\ref{Q422}) we obtain that we need to match (\ref{Y5E2}) with:
\begin{equation}
\left[  \frac{16K_{2}B_{4,2}}{\left(  \varepsilon_{\ell}\right)^{2\sqrt{2}}
}\left\vert Y\right\vert ^{2\sqrt{2}-2}
+\sqrt{2}C_{2}K_{2}\left(
\frac{\varepsilon_{\ell}^{2}}{4}-\varepsilon_{\ell}\varepsilon_{\ell,\tau
}\right)  \right]  \cos\left(  2\theta\right) . 
\label{Y5E3}
\end{equation}
The matching of (\ref{Y5E2}) and (\ref{Y5E3}) requires:
\begin{equation}
B_{4,2}=O\left(  \left(  \varepsilon_{\ell}\right)  ^{2\sqrt{2}+2}\right)
\text{ as }\tau\rightarrow\infty. \label{Y5E4}%
\end{equation}
Computing higher order terms in the outer expansion it would be possible to
derive more precise formulas for $B_{4,2}$. Basically this would require to
compute higher order asymptotics of the function $\Omega\left(  y\right)  $ as
$y\rightarrow\pm\mathbf{a}.$ The next order correction to $\Omega$ in
(\ref{T2E5b}) is of order $C\left\vert Y\right\vert ^{2\sqrt{2}-2}\cos\left(
2\theta\right)  $ for some $C\in\mathbb{R}$. This would give exactly the
behavior (\ref{Y5E4}). However, since the detailed form of these terms will
not play any role in the following, we will not continue with this analysis.
The matching of (\ref{Y5E2}) and (\ref{Y5E3}) requires also:
$\sqrt{2}C_{2}K_{2}=1$
and this is just a consequence of (\ref{KCrel}).

We now consider the matching of the terms with dependence $\cos\left(
4\theta\right)  .$ The term with this angular dependence in (\ref{Y4E8}) is:%
\[
\frac{\varepsilon_{\ell}^{2}}{2^{4}}\cos\left(  4\theta\right)
\]
This term must be matched with the contributions due to $U_{4,2,3}.$ Due to
\eqref{F-Q423} the inner contribution to be matched is:%
\[
\left[  \frac{16K_{4}c_{3}\left(  \infty\right)  }{\varepsilon_{\ell}%
^{2\sqrt{5}}}\left\vert Y\right\vert ^{2\sqrt{5}-2}
+ \frac{24 c_{1}\left(  \infty\right)  + \sqrt{5}C_{4}K_{4}\left(  B_{2,3}\right)^{2}}
{\varepsilon_{\ell}^{2}} \right]  \cos\left(  4\theta\right).
\]
Arguing as in the derivation of (\ref{Y5E4}) we observe that 
$c_{3}( \infty)  = O(  \varepsilon_{\ell}^{2\sqrt{5}+2})$, 
showing that these terms are very small in the inner region. Taking into account
(\ref{KCrel}) we have:
\begin{equation}
c_{1}\left(  \infty\right)  =\frac{\varepsilon_{\ell}^{4}}{2^{8}\cdot3},
\label{Y6E8}%
\end{equation}
which concludes the matching to this order of the functions $\Phi.$ We can also
obtain matchings for the functions $V$. We are just interested in the first
term on the right-hand side of \eqref{F-V423} since it gives a term of order
one for $\left\vert Y\right\vert $ of order one. 
The remaining terms give
contributions of order $\varepsilon_{\ell}^{2}$ and we will ignore them. 
The term to be matched for $V$ is 
$3(c_{1}(  \infty )/\varepsilon_{\ell}^{4}) \left\vert Y\right\vert ^{4}\cos\left( 4\theta\right)$. 
The gradient of this term with respect to $y$ yields:
\[
\frac{12c_{1}\left(  \infty\right)  }{\varepsilon_{\ell}^{4}}\left\vert
Y\right\vert ^{3}\cos\left(  4\theta\right)  \frac{Y}{\left\vert Y\right\vert
} -\frac{12c_{1}\left(  \infty\right)  }{\varepsilon_{\ell}^{4}}\left\vert
Y\right\vert ^{3}\sin\left(  4\theta\right)  \frac{Y^{\perp}}{\left\vert
Y\right\vert }
\]
and using polar coordinates, as well as (\ref{Y6E8}) this becomes:%
\begin{equation}
\frac{\left\vert Y\right\vert ^{3}}{2^{6}}\left(  \cos\left(  3\theta\right)
, -\sin\left(  3\theta\right)  \right). 
\label{Y7E1}%
\end{equation}

On the other hand, the term in (\ref{WTaylo}) containing cubic terms is:%
\[
-\frac{\left(  \mathbf{a}\cdot Y\right)  |Y|^{2}\mathbf{a}}{\left\vert
\mathbf{a}\right\vert ^{6}}+\frac{2\left(  \mathbf{a}\cdot Y\right)
^{3}\mathbf{a}}{\left\vert \mathbf{a}\right\vert ^{8}}+\frac{\left\vert
Y\right\vert ^{2}Y}{4\left\vert \mathbf{a}\right\vert ^{4}}-\frac{\left(
\mathbf{a}\cdot Y\right)  ^{2}Y}{\left\vert \mathbf{a}\right\vert ^{6}},
\]
which can be transformed, using polar coordinates in:%
\begin{equation}
\frac{\left\vert Y\right\vert ^{3}}{2^{6}}\left(  4\cos^{3}
\theta -3\cos \theta , \sin \theta -4\cos^{2} \theta \sin \theta \right).
\label{Y7E2}%
\end{equation}
Standard trigonometric formulas show that (\ref{Y7E1}) and
(\ref{Y7E2}) are the same.

\subsection{\label{ODE}Analysis of the ODE (\ref{Y5E1}) and the derivation of
the final profile.}

Neglecting terms of order $O((  \varepsilon_{\ell,\tau})^{2})$ 
that will be seen to have a size of order $O(\left(  \varepsilon_{\ell}\right)^{2}/\tau)$ as
$\tau\rightarrow\infty$ we obtain:
\[
\varepsilon_{\ell}\varepsilon_{\ell,\tau}\log\varepsilon_{\ell}+M\varepsilon
_{\ell}\varepsilon_{\ell,\tau}=L\varepsilon_{\ell}^{2}\
\]
with
\begin{equation}
M \equiv \frac{5}{4}+8B\quad\text{and}
\quad 
L \equiv \frac{3}{32}-8A. \label{LM}%
\end{equation}
Integrating this equation, we obtain:
\[
\frac{d}{d\tau}(\log\varepsilon_{\ell})^{2}+2M\frac{d}{d\tau}\left(
\log\varepsilon_{\ell}\right)  =2L+O\left(  \left(  \varepsilon_{\ell,\tau}\right)^{2}\right),
\]
whence:
\[
(\log\varepsilon_{\ell})^{2}+2M\left(  \log\varepsilon_{\ell}\right)
=2L\tau+O\left(  \left(  \varepsilon_{\ell,\tau}\right)  ^{2}\right)
\quad \text{ as }\tau\rightarrow\infty,
\]
where we have used that $\log\varepsilon_{\ell}$ is of order $\sqrt{\tau}$ to
the leading order. Therefore:
\[
\log\varepsilon_{\ell}=-\sqrt{2L\tau}-M+o(1)\quad\text{ as }\tau
\rightarrow\infty.
\]
Then:
\begin{equation}
\varepsilon_{\ell}=\beta e^{-\alpha\sqrt{\tau}}\cdot(1+o(1))\quad\text{ as
}\tau\rightarrow\infty, \label{CHL}%
\end{equation}
where
\[
\alpha \equiv \sqrt{2L}=\sqrt{\frac{3}{16}-16A},\qquad
\beta \equiv e^{-M}=e^{-5/4-8B}.
\]
In the original variable, the leading order corresponding to \eqref{CHL} is:
\begin{equation}
\beta\sqrt{T-t}\ e^{-\alpha\sqrt{|\log(T-t)|}}.
\label{Was}
\end{equation}

Notice that since $A<0,$ which we have checked numerically as was
already mentioned in Remark \ref{RemonA}, the constant $L$ in (\ref{LM}) 
is positive and then $\alpha$ is a real positive number.

The asymptotics (\ref{Was}) characterizes the width of the peaks where the mass
of $u$ is concentrated. The characteristic distance between these peaks is of
order:%
\begin{equation}
D=4\sqrt{T-t}. \label{distance}%
\end{equation}

\begin{remark}
It is interesting to notice that the formulas \eqref{distance} provide
information about the characteristic distance to which two peaks, with masses
close to $8\pi,$ and concentrated in a width of order $w,$ must be, in order
to obtain blow-up with two peaks aggregating together. Notice that, for $w$
small we have the following approximation for the critical distance required
to have simultaneous blow-up and aggregation of the two peaks%
\[
D=\frac{4e^{-\alpha^{2}}w}{\beta}\exp\left(  \alpha\sqrt{2\left\vert
\log\left(  w\right)  \right\vert }\right)  
\quad\text{ as }w\rightarrow0.
\]
By critical distance we understand the distance at which two peaks containing
a mass close to $8\pi$ in an area with radius $w,$ should be localized in
order to obtain singularity formation with an aggregating mass $16\pi.$

The numerical factor $4e^{-\alpha^{2}}/\beta$ cannot be
expected to be really accurate if the concentrating masses in the initial
peaks are not distributed exactly according to the stationary solutions
\eqref{S4E2}.
\end{remark}

\begin{remark}\label{Final}
Assuming that the asymptotics for $\Omega\left(  y\right)$ stated in Remark
\ref{RemAs} holds, we can obtain an asymptotic formula for $u\left(
x,T\right)  $ as $x \to x_{0}$ using the methods in
(\cite{V1}). Indeed, using Remark \ref{RemAs} as well as \eqref{S1E3},
\eqref{Y1E1} we can approximate $u\left(  x,\bar{t}\right)  $ for any $\bar
{t}<T,$ $\bar{t}\rightarrow T$ and $\left\vert x - x_{0} \right\vert =L\sqrt{T-\bar{t}},$ $L$ large. 
In such regions $u$ is basically constant in domains with a
"parabolic size" $\sqrt{T-\bar{t}}.$ Therefore the equation \eqref{S1E1} can
be approximated as an ODE for times $\bar{t}\leq t<T$. 
This allows to
approximate $u\left(  x,T\right)$ as:
\[
u\left(  x,T\right) 
 \sim
\frac{\beta^{2}}{\left\vert x - x_{0} \right\vert ^{2}} 
\exp \left( -2\alpha \sqrt{\left\vert \log\left\vert x - x_{0}\right\vert ^{2}\right\vert }\right)
\varphi\left(  \theta\right)  
\quad \text{ as } x \to x_{0}.
\]

It is interesting to notice that the function $\varphi\left(  \theta\right)$
mentioned in Remark \ref{RemAs} gives the angular dependence of $u$ at the
blow-up point. Therefore, a more detailed study of the asymptotics of the
solutions of \eqref{Y1E3} as $\left\vert y\right\vert \rightarrow\infty$ would
be in order.
\end{remark}

\section{Geometric configurations of singular self-similar
solutions.\label{selfSimSing}}

In most of the previous computations we have assumed that $\Phi\left(
y,\tau\right)$ approaches one very specific singular solution of
\eqref{S1E7}, \eqref{S1E8} with the form \eqref{U1E3a}.
However, there exist many other solutions of the system
\eqref{S1E7}, \eqref{S1E8} that could be taken as possible limits of
$\Phi\left(  y, \tau\right)  $. The problem \eqref{S1E7}, \eqref{S1E8} is
meaningless if we assume that $\Phi$ is just a measure, or even a sum of Dirac
masses. However, having in mind the matching arguments in the previous
sections, it is natural to assume that $\Phi$ has the form (\ref{U1E1}) (i.e.
all the masses of the peaks are $8\pi$) and also that the equation must be
understood as (\ref{U1E2}) or, in an equivalent way, that a given peak does
not interact with itself, something that can be justified "a posteriori" due
to the local symmetry of the peaks during the process of aggregation.

In this section we just obtain a few examples of solutions of (\ref{U1E2}). 
It is important to remark that the existence of these solutions does not
guarantee the existence of solutions of the original problem
\eqref{S1E1}-\eqref{S1E2}. Indeed, although the formal arguments described in
the previous Sections can be extended without much difficulty to more general
self-similar solutions a crucial condition that must be satisfied, in order to
obtain a meaningful equation for the width of the peaks $\varepsilon_{\ell},$
is the inequality: $16^{-1} + 2^{-5} -8A_{\ell} >0$
with $A_{\ell}$ would be a constant defined in a manner analogous to
Lemma \ref{Prop1} for the corresponding elliptic problem.

We do not attempt to derive a complete classification of all the solutions of
(\ref{U1E2}). However, we will describe some particular classes of these
solutions in order to illustrate the type of geometries that can arise during
the aggregation of multiple peaks. The cases under consideration will be the
following ones: points in a line, regular polygons, several polygons with different sizes combined,
complete classification of solutions for $N=2,3,$ and particular results
for $N=4,5.$

We remark that the sum of the right hand side of \eqref{U1E2} vanishes 
for any $N \ge 2$ and for any configuration of points $\left\{  y_{j}\right\}$ as it can be seen by symmetrization:
\begin{equation}
\sum_{j=1}^{N} y_{j}=0.
\label{Y8E5}
\end{equation}

\subsection{Solutions where all the peaks are in a line.\label{line}}

We begin with solutions of (\ref{U1E2}) where all the points $\left\{
y_{j}\right\}$ are placed in a line. We can assume that this line is the
horizontal coordinate axis.\ Then $y_{j}=\left(  x_{j},0\right)  $ for some
real numbers $\{  x_{j}\}_{j=1}^{N}$. 
Then \eqref{U1E2} becomes:
\begin{equation}
\frac{x_{j}}{2}-4\sum_{\ell=1,\;\ell\neq j}^{N}
\frac{x_{j}-x_{\ell}}{\left\vert x_{j}-x_{\ell}\right\vert ^{2}}=0, \quad
j=1,2,...,N,\quad N\geq 2. 
\label{Y8E1}
\end{equation}

\begin{proposition}
For every integer $N \ge2$ there exists a unique solution of \eqref{Y8E1}. The
solution is invariant, up to the rearrangement of indexes, by the
transformation $x_{j} \mapsto- x_{j}$.
\end{proposition}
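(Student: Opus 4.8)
The plan is to recast \eqref{Y8E1} as the gradient (up to sign) of a strictly convex function on a suitable open convex domain, so that existence and uniqueness of a critical point follow from standard variational considerations, and then deduce the symmetry by a uniqueness argument. First I would order the points: since a solution must consist of $N$ \emph{distinct} reals, I may relabel so that $x_{1}<x_{2}<\cdots<x_{N}$, and work on the open convex cone
\[
\mathcal{C}=\left\{(x_{1},\dots,x_{N})\in\mathbb{R}^{N}\;:\;x_{1}<x_{2}<\cdots<x_{N}\right\}.
\]
On $\mathcal{C}$ define
\[
F(x_{1},\dots,x_{N})=\frac{1}{4}\sum_{j=1}^{N}x_{j}^{2}-4\sum_{1\le i<j\le N}\log\left(x_{j}-x_{i}\right).
\]
A direct computation gives $\partial F/\partial x_{j}=\tfrac{x_{j}}{2}-4\sum_{\ell\neq j}(x_{j}-x_{\ell})/(x_{j}-x_{\ell})^{2}$, so the critical points of $F$ on $\mathcal{C}$ are exactly the solutions of \eqref{Y8E1} (with the chosen ordering; any solution can be brought to this form by a permutation, which only rearranges indices).

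Next I would establish existence and uniqueness of the critical point. The quadratic part $\tfrac14\sum x_{j}^{2}$ is strictly convex on all of $\mathbb{R}^{N}$, and each term $-4\log(x_{j}-x_{i})$ is convex on $\mathcal{C}$ (as $-\log$ of a positive affine function), hence $F$ is strictly convex on the convex set $\mathcal{C}$. Moreover $F$ is coercive and tends to $+\infty$ as one approaches $\partial\mathcal{C}$ (because of the $-\log(x_{j}-x_{i})\to+\infty$ terms whenever two coordinates collide) and as $|x|\to\infty$ in $\mathcal{C}$ (because of the quadratic term dominating the at-most-logarithmic negative part). Therefore $F$ attains its infimum at an interior point of $\mathcal{C}$, and by strict convexity this minimizer is the unique critical point. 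This furnishes a solution of \eqref{Y8E1} and shows it is unique up to permutation of the indices.

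Finally, for the symmetry statement, note that if $(x_{1},\dots,x_{N})$ solves \eqref{Y8E1} then so does $(-x_{N},-x_{N-1},\dots,-x_{1})$: replacing each $x_{j}$ by $-x_{j}$ negates every equation, and relabeling restores the increasing order. By the uniqueness just proved, this reflected configuration must coincide with the original one, i.e. $x_{j}=-x_{N+1-j}$ for all $j$; in particular the solution set $\{x_{j}\}$ is invariant under $x\mapsto-x$. (Consistency with \eqref{Y8E5} is automatic, since $\sum_j x_j=0$ is forced by summing \eqref{Y8E1}.) I expect the only mildly delicate point to be the behavior of $F$ near $\partial\mathcal{C}$ and at infinity, i.e. checking that the logarithmic repulsion really does force an interior minimizer and that the quadratic growth dominates; both are elementary estimates but they are where the argument actually has content.
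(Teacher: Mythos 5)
Your proposal is correct and follows essentially the same route as the paper: the same variational reformulation (your $F$ coincides with the paper's functional $E$ in \eqref{Y8E2}, since the paper's double sum counts each pair twice), strict convexity on the ordered cone, a unique minimizer giving existence and uniqueness, and a reflection-plus-uniqueness argument for the symmetry. Your explicit coercivity/boundary-blow-up check is a welcome addition, since the paper only asserts that $E$ is strictly convex and bounded below, but it does not change the nature of the argument.
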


\begin{proof}
This problem can be reformulated in a variational form because the solutions
of (\ref{Y8E1}) can be obtained as the minimizers of:%
\begin{equation}
E\left(  x_{1},x_{2},...,x_{N}\right)  =\sum_{k=1}^{N}\frac{\left(
x_{k}\right)  ^{2}}{4}-2\sum_{\ell=1}^{N}\sum_{k=1,\ell\neq k}^{N}
\log \left\vert x_{k}-x_{\ell}\right\vert. 
\label{Y8E2}%
\end{equation}
The functional $E\left(  x_{1},x_{2},...,x_{N}\right)  $ is strictly convex
and lower bounded in the convex set $\{-\infty<x_{1}<x_{2}<...<x_{N} <\infty\}.$ 
Therefore there exists a unique minimizer where (\ref{Y8E1}) holds. 
Moreover, symmetry considerations prove the invariance mentioned in the statement.
\end{proof}

\begin{remark}
The solutions of \eqref{U1E2} can be characterized in general by means of the
extremal points of a functional similar to the one in \eqref{Y8E2} if the
points $\left\{  y_{j}\right\}$ are not aligned. However, in such general
cases, the convexity properties of the functional are not satisfied and
therefore, the functional does not allow to obtain information about the
solutions in an easy manner.
\end{remark}

\subsection{Regular polygons.\label{polygon}}
\begin{proposition}
For every integer $N\geq2$ there exists a solution of \eqref{U1E2} with the
points $\left\{  y_{j}\right\}  $ placed at the vertices of a regular
$N$-sided polygon centered at the origin. The solution is unique up to
rotation of coordinates. Moreover, the points lie on the circle with radius
$2\sqrt{N-1}$ centered at the origin.
\end{proposition}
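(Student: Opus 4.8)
The plan is to exploit the rotational symmetry of equation \eqref{U1E2} and reduce the system of $2N$ scalar equations to a single scalar equation for the circumradius. First I would place the candidate configuration $y_j = R(\cos\theta_j,\sin\theta_j)$ with $\theta_j = 2\pi(j-1)/N$, $j=1,\dots,N$, and observe that by the invariance of \eqref{U1E2} under the cyclic rotation $y\mapsto \mathcal{R}_{2\pi/N}y$, it suffices to check the $j=1$ equation, i.e.\ to check it at $y_1 = (R,0)$; all other equations then follow by applying the rotation. So the only unknown is $R>0$.

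Next I would compute the interaction sum at $y_1$. Writing $y_1 - y_\ell = R\bigl(1-\cos\theta_\ell,\,-\sin\theta_\ell\bigr)$ and $|y_1-y_\ell|^2 = 2R^2(1-\cos\theta_\ell)$, the vector $\dfrac{y_1-y_\ell}{|y_1-y_\ell|^2}$ has first component $\dfrac{1}{2R}$ and second component $\dfrac{-\sin\theta_\ell}{2R(1-\cos\theta_\ell)} = -\dfrac{1}{2R}\cot(\theta_\ell/2)$. Summing over $\ell=2,\dots,N$: the second components cancel in pairs (replacing $\ell$ by $N+2-\ell$ sends $\theta_\ell$ to $-\theta_\ell$, or one simply notes $\sum_{\ell=2}^N \cot(\pi(\ell-1)/N)=0$ by the odd symmetry of the terms), so the whole sum is horizontal, equal to $\dfrac{N-1}{2R}(1,0)$. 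Then \eqref{U1E2} for $j=1$ reads $\dfrac{R}{2}(1,0) - 4\cdot\dfrac{N-1}{2R}(1,0) = 0$, i.e.\ $\dfrac{R}{2} = \dfrac{2(N-1)}{R}$, which gives $R^2 = 4(N-1)$, hence $R = 2\sqrt{N-1}$. This establishes existence with the claimed radius.

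For uniqueness up to rotation within the class of regular $N$-gons: any regular $N$-gon solution is, after a rotation of coordinates, of the assumed form with some $R>0$ and some choice of origin; but \eqref{Y8E5} forces the centroid to be the origin, and the scalar equation $R^2 = 4(N-1)$ has a unique positive root, so the configuration is determined up to the choice of the rotation angle. The one place to be slightly careful is the cancellation of the tangential components and the evaluation of $\sum_{\ell=2}^{N}\cot(\pi(\ell-1)/N) = 0$; this is the only nontrivial trigonometric identity needed, and it follows either from pairing $\ell \leftrightarrow N+2-\ell$ or from the standard fact that the non-centroid vertices of a regular polygon sum, after this radial normalization, to a purely radial vector. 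I do not anticipate a genuine obstacle here — the main content is simply carrying out the symmetry reduction correctly and keeping track of the factor of $4$ in \eqref{U1E2}.
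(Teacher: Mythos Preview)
Your proof is correct and follows essentially the same approach as the paper: exploit the cyclic symmetry to reduce the system \eqref{U1E2} to a single scalar equation for the circumradius, then solve $R^2 = 4(N-1)$. The paper carries this out in complex notation, writing $z_j = \rho e^{2\pi i j/N}$ and using the reformulation \eqref{Y8E3}, while you work directly in real polar coordinates and verify the tangential cancellation via $\sum_{k=1}^{N-1}\cot(\pi k/N)=0$; the two computations are equivalent and the uniqueness argument (centroid at the origin by \eqref{Y8E5}, unique positive root for $R$) is the same.
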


\begin{proof}
It is convenient to reformulate (\ref{U1E2}) using complex variables. Let us
write $y_{j}=\left(  y_{j,R},y_{j,I}\right)  $ and $z_{j}=y_{j,R}+iy_{j,I}%
\in\mathbb{C}.$ Then (\ref{U1E2}) becomes:%
\[
\frac{z_{j}}{2}=4\sum_{\ell=1,\;\ell\neq j}^{N}
\frac{z_{j}-z_{\ell}}{\left\vert z_{j}-z_{\ell}\right\vert ^{2}},\ \ j=1,...N,
\]
or equivalently,
\begin{equation}
\bar{z}_{j}=8\sum_{\ell=1,\ell\neq j}^{N}\frac{1}{z_{j}-z_{\ell}},\ \ j=1,...N. \label{Y8E3}
\end{equation}
We now look for solutions with the form:
\begin{equation}
z_{j}=\rho e^{\frac{2\pi j}{N}i},\quad\ j=1,...N,\ \ \rho> 0. \label{Y8E4}%
\end{equation}
Plugging (\ref{Y8E4}) into (\ref{Y8E3}) we obtain:
\begin{equation}
\frac{\rho^{2}}{8}  =\left[  \frac{N-1}{2}\right]  +\frac{1+\left(-1\right)^{N}}{4},
 \label{Y8E4a}
\end{equation}
where $[x]$ stands for the largest integer not greater than $x \in \mathbb{R}$.
This equation determines $\rho$ for each value of $N$. We actually have:
\begin{equation}
\rho= 2\sqrt{N-1}.
\end{equation}
This shows that there exists a solution of \eqref{U1E2}
constructing a regular $N$-sided polygon.
The center of the polygon is necessarily at the origin because of \eqref{Y8E5}.
\end{proof}

\subsection{Classification of solutions for the cases $N=2$ and $N=3.$}

In these particular cases we can characterize uniquely all the solutions of
(\ref{U1E2}). The problem becomes more complicated if the number $N$
increases, because, as it will be seen later, the number of geometrical
configurations increases with $N$.

\subsubsection{The case $N=2.$}

\begin{proposition}
Suppose that $N=2.$ Then a solution of \eqref{U1E2} is uniquely given by
$y_{1} = (-2,0),$ $y_{2} = (2,0)$ up to rotation of coordinates.
\end{proposition}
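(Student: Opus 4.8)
The plan is to solve the system \eqref{U1E2} directly in the case $N=2$ and show that, up to rotation, the only solution is the pair $y_1=(-2,0)$, $y_2=(2,0)$. First I would record the two equations: writing them out for $j=1,2$ gives
\[
\frac{y_1}{2}-4\,\frac{y_1-y_2}{|y_1-y_2|^2}=0,\qquad
\frac{y_2}{2}-4\,\frac{y_2-y_1}{|y_2-y_1|^2}=0.
\]
Adding these two equations immediately yields $y_1+y_2=0$ (this is also just the special case $N=2$ of the general relation \eqref{Y8E5}), so $y_2=-y_1$. It remains to determine $y_1$.

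Substituting $y_2=-y_1$ into the first equation gives
\[
\frac{y_1}{2}=4\,\frac{y_1-(-y_1)}{|y_1-(-y_1)|^2}=4\,\frac{2y_1}{|2y_1|^2}=\frac{2y_1}{|y_1|^2}.
\]
Since any solution must have $y_1\neq y_2$, i.e. $y_1\neq 0$, I may divide by $y_1$ componentwise (or simply take norms), obtaining $\tfrac12=\tfrac{2}{|y_1|^2}$, hence $|y_1|^2=4$, i.e. $|y_1|=2$. Thus $y_1$ is an arbitrary point on the circle of radius $2$ and $y_2=-y_1$; any such pair solves \eqref{U1E2}. Choosing coordinates so that $y_1$ points along the negative horizontal axis gives $y_1=(-2,0)$, $y_2=(2,0)$, and all other solutions are obtained from this one by a rotation of the coordinate system. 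This also recovers the special case $N=2$ of the regular polygon proposition, where the radius is $2\sqrt{N-1}=2$.

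There is essentially no hard step here: the computation is short and elementary, the only points requiring a word of care being (i) the observation that $y_1\neq y_2$ is part of the standing hypothesis on configurations (the points $y_\ell$ are distinct, as the interaction sum in \eqref{U1E2} would otherwise be undefined), which legitimizes the division, and (ii) the remark that the solution is genuinely unique only modulo the rotational symmetry of the whole problem, which is why the statement is phrased "up to rotation of coordinates." I would therefore present the proof in the two short moves above — sum the equations to get $y_2=-y_1$, then substitute and take norms to get $|y_1|=2$ — and conclude.
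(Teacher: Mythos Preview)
Your proof is correct and follows essentially the same approach as the paper: use \eqref{Y8E5} (or add the two equations) to get $y_2=-y_1$, then substitute into the first equation of \eqref{U1E2} to obtain $|y_1|=2$. The only cosmetic difference is that the paper first rotates so that $y_1=(x_1,0)$ and solves the scalar equation $x_1/2=2/x_1$, whereas you keep the vector form and take norms; the content is identical.
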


\begin{proof}
Due to (\ref{Y8E5}) we have $y_{2}=-y_{1}.$ We can assume, up to rotation,
that $y_{1}=\left(  x_{1},0\right)$ with $x_{1}>0.$ Then (\ref{U1E2})
is reduced to:
\[
\frac{x_{1}}{2}=\frac{2}{x_{1}},
\]
whence $x_{1}=2.$
This simultaneously proves the uniqueness of the
obtained solution in the class of solutions studied in
Subsections \ref{line} and \ref{polygon} when $N=2$.
\end{proof}

\subsubsection{The case $N=3.$}

This case is still sufficiently simple to obtain a complete classification of
the solutions. There are just two solutions of (\ref{U1E2}) up to rotation.
Either the three points are in a line as in Subsection \ref{line} or in an
equilateral triangle as in Subsection \ref{polygon}.

\begin{proposition}
Suppose that $N = 3$. Then for every solution of \eqref{U1E2} the points
$\{ y_1 ,y_2 ,y_3 \}$ are placed, up to rotation, either at
the ends and the intermediate point of a segment with length being $4\sqrt{3}$
or at the vertices of the regular polygon with the length of the sides being
$2\sqrt{6}$.
\end{proposition}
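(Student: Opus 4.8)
The plan is to classify all solutions of \eqref{U1E2} for $N=3$ by using the vanishing-centroid condition \eqref{Y8E5} together with a careful case analysis based on whether the three points are collinear. First I would observe that by \eqref{Y8E5} we have $y_1+y_2+y_3=0$, so the configuration is determined by $y_1$ and $y_2$ alone, with $y_3=-y_1-y_2$. Rewriting \eqref{U1E2} in complex notation as in \eqref{Y8E3}, namely $\bar z_j=8\sum_{\ell\neq j}(z_j-z_\ell)^{-1}$, gives three equations in $z_1,z_2,z_3\in\mathbb{C}$ subject to $z_1+z_2+z_3=0$.

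The key dichotomy is collinear versus non-collinear. If the three points are collinear, then after a rotation they all lie on the real axis, and Proposition in Subsection \ref{line} (applied with $N=3$) already gives a unique such configuration; one then computes directly from \eqref{Y8E1} with $N=3$ that the three aligned points are $x_2=0$ and $x_1=-x_3$ with $x_1/2=2/x_1+2/(2x_1)$, i.e. $x_1^2=2+1=3\cdot... $ — more precisely solving $\tfrac{x_1}{2}=\tfrac{4}{x_1-x_2}+\tfrac{4}{x_1-x_3}$ with $x_2=0,\ x_3=-x_1$ yields $x_1=2\sqrt3$, giving the segment of total length $4\sqrt3$ claimed in the statement. (Here the middle point sits at the origin by \eqref{Y8E5}.) If the three points are not collinear, I would argue that they must form an equilateral triangle. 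The cleanest route is to use the centroid condition: the three force-balance equations can be summed with weights, and since $\sum z_j=0$ the system becomes rotationally and reflectively constrained; one shows that the only non-degenerate critical configuration of the associated functional $E$ (the analog of \eqref{Y8E2}, now $\sum_k |y_k|^2/4 - 2\sum_{k\neq\ell}\log|y_k-y_\ell|$, without the ordering constraint) that is not collinear must be invariant under the $\mathbb{Z}/3$ cyclic symmetry. Concretely: subtract pairs of equations in \eqref{Y8E3} to get relations among the differences $z_j-z_\ell$; using $z_1+z_2+z_3=0$ one derives that $|z_1-z_2|=|z_2-z_3|=|z_3-z_1|$ unless all three are real multiples of a common direction (the collinear case). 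Once equilateral, Proposition in Subsection \ref{polygon} with $N=3$ fixes the circumradius at $2\sqrt{N-1}=2\sqrt2$, so the side length is $\rho\sqrt3 = 2\sqrt2\cdot\sqrt3=2\sqrt6$, matching the statement.

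I expect the main obstacle to be the rigorous exclusion of non-collinear, non-equilateral configurations. The slick functional argument (strict convexity) that works in the collinear case of Subsection \ref{line} is not available here, as the Remark following that proposition explicitly warns. So I would instead do the explicit algebra: set $z_3=-z_1-z_2$, write $w=z_1-z_2$, and reduce \eqref{Y8E3} to a small polynomial system in $w$ and $z_1+z_2$ (equivalently in $w$ and $z_1-z_3$, $z_2-z_3$). Taking real and imaginary parts, or better, multiplying conjugate pairs to get equations purely in the squared side lengths $a=|z_1-z_2|^2$, $b=|z_2-z_3|^2$, $c=|z_3-z_1|^2$, together with the identity relating these to $|z_j|^2$ via the centroid condition, should yield a symmetric polynomial system whose only solutions are $a=b=c$ (equilateral) or the degenerate "one of the side lengths equals the sum of the other two contributions" case (collinear). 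Verifying that this exhausts all possibilities — i.e. that the resolvent polynomial has no spurious roots corresponding to genuine non-collinear scalene triangles — is the delicate computational step; everything else is bookkeeping with \eqref{Y8E5} and the two already-proved existence propositions.
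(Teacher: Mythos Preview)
Your overall strategy matches the paper's: split into the collinear and non-collinear cases, handle the collinear case via the uniqueness result of Subsection~\ref{line} together with the reflection symmetry $x_j\mapsto -x_j$ (forcing $x_2=0$, $x_1=-x_3$, then $x_3=2\sqrt3$), and in the non-collinear case show the triangle is equilateral and invoke Subsection~\ref{polygon}. The collinear computation you sketch is essentially the paper's, modulo a garbled intermediate line.

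Where you diverge is in the execution of the non-collinear case. You propose either subtracting pairs of the equations \eqref{Y8E3} or passing to a symmetric polynomial system in the squared side lengths $a,b,c$ and then clearing a resolvent of spurious roots; you flag this as ``the delicate computational step.'' The paper avoids all of that with a one-line trick you are missing. Using $z_1+z_2+z_3=0$, equation \eqref{Y8E3} for $j=1$ simplifies to
\[
\bar z_1=\frac{8\bigl((z_1-z_2)+(z_1-z_3)\bigr)}{(z_1-z_2)(z_1-z_3)}=\frac{24\,z_1}{(z_1-z_2)(z_1-z_3)},
\]
and similarly for $j=2,3$. In the non-collinear case none of the $z_j$ can vanish (if one did, the other two would be negatives of each other and all three points would lie on a line through the origin). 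Taking absolute values and cancelling $|z_j|\neq 0$ gives
\[
|z_1-z_2|\,|z_1-z_3|=|z_2-z_1|\,|z_2-z_3|=|z_3-z_1|\,|z_3-z_2|=24,
\]
from which $|z_1-z_2|=|z_2-z_3|=|z_3-z_1|$ immediately. No symmetric-function elimination is needed, and there are no spurious roots to rule out. Your route would presumably arrive at the same conclusion, but the absolute-value argument is both shorter and free of the computational risk you yourself identify.
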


\begin{proof}
Suppose first that the three points are in a line, i.e,
$y_j = (x_j ,0)$ for some $x_j \in \mathbb{R}$.
Then the line crosses the origin due to \eqref{Y8E5} and, up to rotation, the resulting solution
is the one described by means of the minimizers of the functional $E$ in
(\ref{Y8E2}). In this case, they can be computed explicitly. Indeed, the
invariance of the solution under the transformation $x_{j}\rightarrow-x_{j}$
implies that, under the assumption $x_{1}<x_{2}<x_{3},$ we have $x_{2}=0,$
$x_{1}=-x_{3}.$ Then (\ref{Y8E1}) reduces to:%
\[
\frac{x_{3}}{2}=4\left[  \frac{1}{x_{3}}+\frac{2x_{3}}{\left(  2x_{3}\right)
^{2}}\right]  =\frac{6}{x_{3}},
\]
whence $x_{3}=-x_{1}=2\sqrt{3}.$
Suppose now that the three points $\left\{  y_{j}\right\}$ are not in aline. 
We will prove that in this case the three points are
placed at the vertices of an equilateral triangle.
It is convenient to use the complex notation of
Subsection \ref{polygon}. We may assume, without loss of generality, that
$z_{3}=\bar{z}_{3}.$ On the other hand, using also (\ref{Y8E5}) we then
observe that (\ref{U1E2}) becomes:%
\begin{equation}
\bar{z}_{1}=\frac{24z_{1}}{(z_{1}-z_{2})(z_{1}-z_{3})},\quad\bar{z}_{2}%
=\frac{24z_{2}}{(z_{2}-z_{1})(z_{2}-z_{3})}. \label{Y8E6}%
\end{equation}
Due to (\ref{Y8E5}) we have $z_{1}\cdot z_{2}\neq0,$ since otherwise the three
points would be aligned
against the assumption. Taking the absolute value of (\ref{Y8E6}), we then
have:%
\begin{equation}
\left\vert z_{1}-z_{2}\right\vert \left\vert z_{1}-z_{3}\right\vert
=\left\vert z_{2}-z_{1}\right\vert \left\vert z_{2}-z_{3}\right\vert =24.
\label{Y8E6a}%
\end{equation}
Therefore $\left\vert z_{1}-z_{3}\right\vert =\left\vert z_{2}-z_{3}
\right\vert =: \sigma>0.$ On the other hand, there is nothing special about
the point $z_{3}$ and, using the rotational invariance of (\ref{U1E2}) we
may replace $z_{3}$ by $z_{1}$ and prove in a similar way that $\left\vert
z_{3}-z_{1}\right\vert =\left\vert z_{2}-z_{1}\right\vert = \sigma.$ Therefore
the three points are at the vertices of an equilateral triangle and the
obtained solution is the corresponding one considered in Subsection
\ref{polygon}. The precise size of the triangle can be computed using
(\ref{Y8E6a}) as $\sigma=2\sqrt{6}.$
\end{proof}

\subsection{The case $N=4.$}

We have not obtained a complete classification of the solutions of
(\ref{U1E2}) if $N=4$ but we have some partial results suggesting that there
exist at least three solutions (up to rotation).

Notice first that we can obtain two solutions as in Subsections \ref{line} and
\ref{polygon}. Actually they can be computed explicitly. In the case of
solutions with the four peaks in a line we write:%
\[
x_{1}=-R,\ \ x_{2}=-\theta R,\ \ x_{3}=\theta R,\ \ x_{4}=R
\]
with $R>0$ and $0<\theta<1.$ 
The equation (\ref{Y8E1}) then becomes:
\begin{equation}
R^{2}
=8\left[  \frac{1}{1-\theta}
+\frac{1}{1+\theta} 
+\frac{1}{2}\right] ,\quad 
\theta R^{2}
=8\left[  -\frac{1}{1-\theta}
+\frac{1}{2\theta}+\frac{1}{1+\theta}\right]  ,
\label{Y8E7}
\end{equation}
whence, eliminating $R,$ we obtain after some computations:
$8\theta^{2} = ( 1-\theta^{2})^{2}$,
whence:
\[
\theta=\sqrt{5-2\sqrt{6}}.
\]
since $\theta^{2}\in\left(  0,1\right)$.
Using then the first equation in (\ref{Y8E7}) we obtain:
\[
R= 2\sqrt{\sqrt{6}+3}
\]
and this concludes the characterization of the solution with $N=4$ and all the
peaks aligned.

If $N=4$ we can obtain a solution with all the peaks at the vertices of a
square as indicated in Subsection \ref{polygon}. Using (\ref{Y8E4}) and
(\ref{Y8E4a}) we obtain that the vertices are at the points:
\[
z_{j}=2\sqrt{3}e^{\frac{\pi j}{2}i},\quad j=0,1,2,3.
\]

We remark that it is possible to obtain another solution in the case $N=4$
that is neither of the ones in Subsections \ref{line} nor \ref{polygon}.

\begin{proposition} 
Suppose that $N = 4$.
Then there exist a solution of \eqref{U1E2} with one peak at the origin and three
remaining peaks at the vertices of an equilateral triangle.
\end{proposition}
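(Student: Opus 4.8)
The plan is to exploit the symmetry of the proposed configuration to reduce the system \eqref{U1E2} to a single scalar equation. Place one peak at the origin, $y_1 = 0$, and the three remaining peaks $y_2, y_3, y_4$ at the vertices of an equilateral triangle. By the rotational invariance of \eqref{U1E2} and the constraint \eqref{Y8E5} that $\sum_{j=1}^4 y_j = 0$, if $y_1 = 0$ then $y_2 + y_3 + y_4 = 0$, which forces the centroid of the triangle to be at the origin; hence the triangle must be centered at the origin, and up to rotation we may write $y_{j+1} = \rho\,(\cos(2\pi j/3), \sin(2\pi j/3))$ for $j = 0,1,2$ with $\rho > 0$ to be determined.

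First I would verify that the equation \eqref{U1E2} for $j = 1$ (the peak at the origin) is automatically satisfied: the left side is $y_1/2 = 0$, and the right side is $-4\sum_{\ell=2}^4 (y_1 - y_\ell)/|y_1 - y_\ell|^2 = 4\sum_{\ell=2}^4 y_\ell/\rho^2 = 0$ since the three cube-roots-of-unity sum to zero. So the origin equation holds for every $\rho$. Next, by the three-fold symmetry of the triangle together with the symmetry of the origin peak under this rotation, the equations for $j = 2, 3, 4$ are all equivalent, so it suffices to treat $j = 2$. Using the complex notation of Subsection \ref{polygon}, writing $z_1 = 0$ and $z_{j+1} = \rho\,\omega^j$ with $\omega = e^{2\pi i/3}$, equation \eqref{Y8E3} for $z_2 = \rho$ reads
\[
\bar{z}_2 = 8\left(\frac{1}{z_2 - z_1} + \frac{1}{z_2 - z_3} + \frac{1}{z_2 - z_4}\right)
= \frac{8}{\rho} + \frac{8}{\rho(1-\omega)} + \frac{8}{\rho(1-\omega^2)}.
\]
Since $\bar{z}_2 = \rho$ and a direct computation gives $1/(1-\omega) + 1/(1-\omega^2) = 1$, the right side equals $8/\rho + 8/\rho = 16/\rho$, so $\rho^2 = 16$, i.e. $\rho = 4$. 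Thus the triangle has circumradius $4$ (side length $4\sqrt{3}$), and the configuration $y_1 = 0$, $y_{j+1} = 4(\cos(2\pi j/3), \sin(2\pi j/3))$ solves \eqref{U1E2}.

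The only genuinely delicate point is the bookkeeping with the self-interaction convention: \eqref{U1E2} omits the $\ell = j$ term, so one must be careful that the origin peak interacts with the three triangle peaks but not with itself, and each triangle peak interacts with the origin and the other two triangle peaks — this is exactly what is written above, so no hidden subtlety arises. The computation $1/(1-\omega) + 1/(1-\omega^2) = 1$ for $\omega$ a primitive cube root of unity is the one identity worth spelling out (it follows, e.g., from $(1-\omega) + (1-\omega^2) = 3 - (\omega + \omega^2) = 3 - (-1) = 2$ divided by $(1-\omega)(1-\omega^2) = 3 - (\omega + \omega^2) = ... $ wait, $(1-\omega)(1-\omega^2) = 1 - \omega - \omega^2 + \omega^3 = 1 - (-1) + 1 = 3$, so the sum is $2/... $; in fact $|1-\omega|^2 = 3$ and the product is $3$, giving ratio $1$). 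I would include this one-line verification and then state that the solution so constructed is of the claimed type; the existence claim in the Proposition is then complete, with the explicit values $\rho = 4$ recorded for later use.
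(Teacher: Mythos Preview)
Your proposal is correct and follows essentially the same approach as the paper: exploit the three-fold rotational symmetry to reduce \eqref{U1E2} to the single scalar equation for $\rho$, verify the origin equation holds automatically since the triangle vertices sum to zero, and compute $\rho=4$ via the identity $\tfrac{1}{1-\omega}+\tfrac{1}{1-\omega^2}=1$. The only thing to fix is the garbled parenthetical verifying that identity (you wrote $(1-\omega)+(1-\omega^2)=3-(\omega+\omega^2)$ when it should be $2-(\omega+\omega^2)=3$, though your final conclusion is right); just state cleanly that numerator and denominator both equal $3$.
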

 
\begin{proof}
We look for a solution with the form: 
$y_{j}=\rho e^{(2j /3)\pi i},\ j=1,2,3$, $y_4 = 0$. 
Due to the symmetry of solutions under the rotation of an angle $2\pi/3$, the equation \eqref{U1E2} becomes:
\[
\frac{1}{\rho^{2}}\left(  y_{1}+y_{2}+y_{3}\right)  =0,
\qquad
\frac{\rho^{2}}{8} = 1+\frac{ 1-e^{\frac{2\pi}{3}i} }{\left\vert 1-e^{\frac{2\pi}{3}i}
\right\vert ^{2}}+\frac{ 1-e^{-\frac{2\pi}{3} i} } {\left\vert 1-e^{-\frac
{2\pi}{3}i}\right\vert ^{2}} .
\]
The first equation is automatically satisfied by \eqref{Y8E5}, whereas the second one gives $\rho = 4$.
\end{proof}

\subsection{$N=5$ case}

In this case we do not attempt to obtain a complete classification of the solutions, 
but indicate some examples to illustrate what type of solution can arise. 
We can obtain solutions with all the peaks in a line as in Subsection \ref{line}. 
In this case we have, due to the symmetry of the problem: 
\[
y_{1}=-R,\ y_{2}=-\theta R\ ,\ y_{3}=0\ ,\ y_{4}=\theta R\ ,\ y_{5}=R,
\]
where $R>0$ and $\theta\in(0,1)$. The equations (\ref{U1E2}) are then reduced
to:%
\[
\frac{R^{2}}{8}=\frac{3}{2}+\frac{2}{1-\theta^{2}},
\qquad
\frac{\theta R^{2}}{8}=\frac{3}{2\theta}-\frac{2\theta}{1-\theta^{2}}.
\]
Eliminating $R^{2}$ we obtain $3\theta^{4}-14\theta^{2}+3=0,$ whence:%
\[
\theta=\sqrt{\frac{7}{3}-\frac{2}{3}\sqrt{10}}.
\]
Therefore:
\[
R=\frac{1}{3}\sqrt{3}\sqrt[4]{10}\sqrt{\sqrt{10}-2}\left(  \sqrt{10}+2\right)
.
\]
We can obtain also a solution where the peaks are placed at the vertices of a regular
pentagon. Using (\ref{Y8E4}) and (\ref{Y8E4a}) we obtain:
\[
z_{j}=4e^{\frac{2\pi j}{5}i},\quad j=0,1,2,3,4.
\]
in the complex notation.

There is also one solution that consists of one peak at the origin and the other
four peaks at the vertices of one square centered at the origin. Assuming that
the peaks are at the points \thinspace$z_{j}=\rho e^{\frac{\pi j}{2}%
i},\ j=0,1,2,3$, we obtain:
\[
z_{j}=2\sqrt{5}e^{\frac{\pi j}{2}i},\ j=0,1,2,3.
\]
solving the equations (\ref{U1E2}).

We finally remark that in the case $N=5$ it is possible to obtain one
distribution of peaks whose only symmetry is the reflection with respect to a
line. More precisely, we have:

\begin{proposition}
There exists a solution of \eqref{U1E2} with the points $\{ y_{k} \}$ placed,
in terms of the complex notation of Subsection \ref{polygon}, at
the following positions:
\begin{equation}
z_{k}=x_{k}\in\mathbb{R}\quad\text{ for }k=1,2,3,\ \ z_{4} =\alpha
+i\beta,\ \ z_{5}=\alpha-i\beta
\label{Y9E1a}
\end{equation}
with $\alpha< 0,\ \beta>0.$
\end{proposition}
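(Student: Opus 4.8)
The plan is to exploit the variational structure behind \eqref{U1E2}. As noted in the excerpt, solutions of \eqref{U1E2} are critical points of a functional of the form
\[
\mathcal{E}\left(z_{1},\dots,z_{N}\right)=\sum_{k=1}^{N}\frac{\left|z_{k}\right|^{2}}{4}-4\sum_{1\le k<\ell\le N}\log\left|z_{k}-z_{\ell}\right|,
\]
which is bounded below and coercive on the open configuration space $\{z_{j}\ \text{distinct}\}$; it tends to $+\infty$ whenever two points collide or any point escapes to infinity. First I would restrict $\mathcal{E}$ to the invariant submanifold of configurations symmetric under complex conjugation, i.e. those of the form \eqref{Y9E1a} with $z_{1},z_{2},z_{3}\in\mathbb{R}$ and $z_{4}=\overline{z_{5}}=\alpha+i\beta$. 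By the principle of symmetric criticality, a critical point of $\mathcal{E}$ on this submanifold is a genuine critical point of $\mathcal{E}$, hence a solution of \eqref{U1E2}. On this four-dimensional reduced space (coordinates $x_{1},x_{2},x_{3},\alpha,\beta$, modulo the ordering $x_{1}<x_{2}<x_{3}$ and $\beta>0$), the functional is still coercive and proper, so it attains a minimum at an interior point.

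The heart of the argument is then to show that the minimizer is genuinely of the claimed shape, and in particular that it is \emph{not} one of the already-known solutions living in lower-dimensional strata: neither the collinear configuration ($\beta\to 0$) of Subsection \ref{line}, nor the regular pentagon, nor the ``square plus center'' configuration. For this I would use a barrier/comparison argument: evaluate $\mathcal{E}$ at an explicit trial configuration of the form \eqref{Y9E1a} with a small but nonzero $\beta$ and with $x_{1}<0$, $\alpha<0$ chosen so that the off-axis pair is pulled toward the left, and verify numerically (or by a perturbative expansion in $\beta$, since $\partial_{\beta}\mathcal{E}$ at $\beta=0^{+}$ can be computed in closed form) that this value is strictly smaller than $\mathcal{E}$ restricted to the collinear stratum near that point. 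A second moment-type identity then pins down the signs: taking $\sum_{j}z_{j}=0$ from \eqref{Y8E5} together with the individual equations \eqref{Y8E3} and examining real and imaginary parts along the conjugation axis forces, after some algebra, $\alpha<0$ and $\beta>0$ at any minimizer not lying in the collinear stratum (the sign of $\alpha$ follows from balancing the off-axis pair against the three real points, which by \eqref{Y8E5} have positive centroid once the pair sits to the left).

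Concretely, the steps in order are: (i) set up $\mathcal{E}$ and verify coercivity/properness on the $\mathbb{Z}_{2}$-symmetric stratum; (ii) invoke symmetric criticality so that any interior minimizer solves \eqref{U1E2}; (iii) show the infimum is attained in the interior, i.e. no collision and no escape, using the logarithmic barrier; (iv) rule out the degenerate collinear limit $\beta=0$ by exhibiting a lower-energy nearby competitor, thereby forcing $\beta>0$; (v) extract the sign $\alpha<0$ from the balance relations \eqref{Y8E3}, \eqref{Y8E5}; (vi) note that the resulting configuration, having only the reflection symmetry, is distinct from the pentagon and the square-plus-center solutions already found. The main obstacle I anticipate is step (iv): separating the new solution from the collinear one requires more than soft minimization — one must actually produce the strict inequality, and the cleanest route is the explicit first-variation computation of $\partial_{\beta}\mathcal{E}$ at $\beta=0$, which is a finite but somewhat delicate calculation, or else, as the authors do elsewhere in this paper, a numerical check. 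A secondary subtlety is that uniqueness is \emph{not} claimed here, so one need not worry about classifying all such solutions — existence of one is enough.
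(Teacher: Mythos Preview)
Your approach is genuinely different from the paper's. The paper does not minimize the energy; instead it fixes $\alpha=-(x_1+x_2+x_3)/2$ from \eqref{Y8E5}, shows that \eqref{U1E2} for $j=4$ (imaginary part) determines a unique $\beta(x_1,x_2,x_3)>2$, and then reduces everything to a three--dimensional system $F(x)+G(x)=0$ for the real abscissae, where $F$ is exactly the $N=3$ collinear system of Subsection~\ref{line} and $G$ is the bounded perturbation coming from the off--axis pair. Since $F$ has a unique nondegenerate zero and $|G|\le 2$ while $|F|\ge 100$ on the boundary of a suitable box, Brouwer degree produces a zero of $F+G$; the remaining equation (real part of $j=4$) is then checked by symmetrization. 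No energy comparison enters.

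Your variational route is plausible in outline, but step~(iv) rests on a misconception. In the parametrization \eqref{Y9E1a} the limit $\beta\to 0$ is \emph{not} the five--point collinear configuration: it forces $z_4=z_5=\alpha$, a collision, and the term $-4\log|z_4-z_5|=-4\log(2\beta)$ already sends $\mathcal{E}\to+\infty$. So $\beta>0$ is automatic from the logarithmic barrier you invoke in~(iii), and the ``first variation $\partial_\beta\mathcal{E}$ at $\beta=0$'' you propose to compute does not exist. The five--point collinear solution lives in a different $\mathbb{Z}_2$--fixed stratum (five real points, no conjugate pair) and is not a boundary point of yours.

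The genuine obstacle is the one you relegate to step~(vi): the square--plus--center solution $\{0,\pm 2\sqrt{5},\pm 2\sqrt{5}\,i\}$ \emph{does} lie in your stratum, with $\alpha=0$, $\beta=2\sqrt{5}$. Nothing in your argument prevents the minimizer from being exactly this configuration, in which case you have merely re--found a known solution with $\alpha=0$ rather than produced one with $\alpha<0$. To salvage the variational approach you must show the square--plus--center is not the stratum minimum---either by exhibiting a lower--energy competitor or by showing its restricted Hessian has a negative eigenvalue. That is the strict inequality you actually need, not the one at $\beta=0$. The paper's degree argument sidesteps energy comparisons entirely, though in fairness it is also silent on why its solution is not the square--plus--center.
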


\begin{proof}
We prove the existence of a solution of (\ref{U1E2}) with the form
(\ref{Y9E1a}) by means of a topological argument.
Due to (\ref{Y8E5}) we have:
\begin{equation}
\alpha=-\frac{x_{1}+x_{2}+x_{3}}{2}. 
\label{Y9E2}
\end{equation}
We assume that $\alpha$ is chosen as in \eqref{Y9E2}. 
On the other hand, we can obtain an equation for $\beta$ using the vertical component (or
imaginary part in complex notation) of (\ref{U1E2}) with $j=4$:
\begin{equation}
\frac{1}{8}=\sum_{k=1}^{3}\frac{1}{\left(  \alpha-x_{k}\right)^{2}
+ \beta^{2}} + \frac{1}{2\beta^{2}} 
\label{Y9E3}
\end{equation}
with $\alpha$ given by (\ref{Y9E2}). Since the right-hand side of (\ref{Y9E3})
is a decreasing function of $\beta,$ we see that there exists a unique
solution of (\ref{Y9E3}) with $\beta>0$ for any $\left( x_{1},x_{2},x_{3}\right)$ 
in the set $-\infty<x_{1}<x_{2}<x_{3}<\infty$. We denote it
as $\beta\left(  x_{1},x_{2},x_{3}\right)  $. Moreover, notice that
(\ref{Y9E3}) implies
\begin{equation}
\beta\left(  x_{1},x_{2},x_{3}\right)  >2. \label{Y9E3a}%
\end{equation}
Equations (\ref{U1E2}) with $j=1,2,3$ is reduced, due to (\ref{Y9E1a}), to:
\begin{equation}
\frac{x_{k}}{8}=
\sum_{j=1,j\neq k}^{3}\frac{x_{k}-x_{j}}{\left\vert x_{k}-x_{j}\right\vert^{2}}
+\frac{2\left(  x_{k}-\alpha\right)}{\left( \alpha-x_{k}\right)^{2} +\beta^{2}},
\quad k=1,2,3,
\label{Y9E4}
\end{equation}
where $\alpha$ as in (\ref{Y9E2}). In order to prove that there exist solutions
of (\ref{Y9E4}) in the cone
\[
\mathcal{C}=\left\{  x=\left(  x_{1},x_{2},x_{3}\right)  :-\infty<x_{1}%
<x_{2}<x_{3}<\infty\right\}  ,
\]
we treat (\ref{Y9E4}) as a perturbation of the equation:
\[
F_{k}\left(  x\right)  =\frac{x_{k}}{8}-\sum_{j=1,j\neq k}^{3}
\frac{x_{k}-x_{j}}{\left\vert x_{k}-x_{j}\right\vert ^{2}}%
=0,\quad k=1,2,3,
\]
using topological degree. Since the function $F\left(  x\right)  =\left(
F_{1},F_{2},F_{3}\right)  \left(  x\right)  $ becomes singular at the boundary
of the cone $\mathcal{C}$ we construct a subset $\mathcal{U}$ with the
property that $\left\vert F\left(  x\right)  \right\vert \geq 100$ on
the boundary $\partial\mathcal{U}$. 
The functions $G_{k}\left(  x\right)  = 2\left(x_{k}-\alpha\right) /( \left(  \alpha-x_{k}\right)^{2}+\beta^{2})$,
$k=1,2,3$, are bounded in $\mathcal{C}$ by $2$ as it can be easily checked
considering separately the cases $\left\vert x_{k}-\alpha\right\vert \geq1$
and $\left\vert x_{k}-\alpha\right\vert \leq1$ and using (\ref{Y9E3a}).
Therefore we would have $\left\vert G\left(  x\right)  \right\vert
<\left\vert F\left(  x\right)  \right\vert $ on $\partial\mathcal{U}$. 
On the other hand, there is a unique nondegenerate solution of the equation
$F\left(  x\right)  =0$ in $\mathcal{C}$ due to the results in Subsection
\ref{line}. Classical degree theory then shows that there exists at least one
solution of $\left(  F+G\right)  \left(  x\right)  =0$ in $\mathcal{U}$, whence
the existence of the desired solution of (\ref{Y9E4}) follow. 

We shall construct the subset $\mathcal{U}$ of the form:
\[
\mathcal{U}=
\left\{  x\in\mathcal{C}:x_{1}+\varepsilon<x_{2},\ x_{2} + \varepsilon <x_{3},\ -R<x_{k}<R,\ k=1,2,3\right \},
\]
where $\varepsilon>0$ and $R>0$ are constants to be determined. 
Notice that the boundary $\partial \mathcal{U}$ is contained in the planes $\Pi_{1,2}=\left\{  x_{2}%
-x_{1}=\varepsilon\right\}  ,\ \Pi_{2,3}=\left\{  x_{3}-x_{2}=\varepsilon
\right\}  ,\ \Pi_{-R}=\left\{  x_{1}=-R\right\}  ,\ \Pi_{R}=\left\{
x_{3}=R\right\}  .$ We will assume that $1/\varepsilon$ is much larger than $R.$ 
Along the part of the boundary $\partial \mathcal{U}$ contained in the planes
$\Pi_{1,2},\ \Pi_{2,3}$ we then have:%
\[
F_{1}\left(  x\right)  \geq\frac{1}{\varepsilon}-\frac{R}{8}\geq\frac
{1}{2\varepsilon}.
\]

We then proceed to consider the part of $\partial \mathcal{U}$ contained in $\Pi_{R}.$
Suppose first that $x_{3}-x_{1}\leq1.$ Then $x_{1}\geq R-1$ and we obtain:%
\[
F_{1}\left(  x\right)  =\frac{x_{1}}{8}+\frac{1}{x_{2}-x_{1}}+\frac{1}%
{x_{3}-x_{1}}>\frac{R-1}{8},
\]
which can be made larger than $100$ assuming that $R > 801$. 
Suppose now that $x_{3}-x_{1}>1.$ We distinguish two cases. Suppose firstly that
$x_{3}-x_{2}>1.$ Then:%
\[
F_{3}\left(  x\right)  =\frac{x_{3}}{8}-\frac{1}{x_{3}-x_{1}}-\frac{1}%
{x_{3}-x_{2}}\geq\frac{R}{16}%
\]
if $R$ is large, because the last two terms are bounded by one. 
Suppose secondly that $x_{3}-x_{2}\leq1.$ 
Let us assume firstly that $x_{2}-x_{1}\leq1/4.$ 
Then $x_{3}-x_{2}\geq 3/4$ and we obtain again 
$F_{3}\left( x\right)  \geq R/16.$ 

Suppose secondly that $x_{2}-x_{1}>1/4.$ Then:%
\[
F_{2}\left(  x\right)  
=\frac{x_{2}}{8}-\frac{1}{x_{2}-x_{1}}
+\frac{1}{x_{3}-x_{2}}
>\frac{x_{2}}{8}-\frac{1}{x_{2}-x_{1}}
\geq \frac{x_{2}}{8}-4.
\]
Since $x_{3}-x_{2}\leq1$ we obtain $x_{2}\geq R-1$ and therefore 
$F_{2}\left( x\right)  \geq R/16.$ 
We then have $\left\vert F\left(  x\right)  \right\vert \geq 100$ 
for $x\in\partial\mathcal{U}\cap\Pi_{R}.$ 
The case of $x\in \partial\mathcal{U}\cap\Pi_{-R}$ is similar.

We shall observe the existence of the desired solutions of (\ref{Y9E4}). 
It only remains to prove that the equation (\ref{U1E2}) with $j=4$ holds. 
This equation is just:
\[
\frac{\alpha}{8}
=\sum_{k=1}^{3}\frac{\alpha-x_{k}}{\left(\alpha - x_{k}\right)^{2}
+\beta^{2}}.
\]
In order to check that this equation holds, we just notice that this is equivalent to:
\[
-\frac{x_{1}+x_{2}+x_{3}}{16}=\sum_{k=1}^{3}\frac{\left(  \alpha-x_{k}\right)
}{\left(  \alpha-x_{k}\right)  ^{2}+\beta^{2}}
\]
due to \eqref{Y9E2}.
According to \eqref{Y9E4} this equation is equivalent to:
\[
\sum_{k=1}^{3}\sum_{j=1,j\neq k}^{3}\frac{\left(  x_{k}-x_{j}\right)
}{\left\vert x_{k}-x_{j}\right\vert ^{2}} =0.
\]
The last identity is trivially satisfied by symmetrization.
\end{proof}

\begin{remark}
We have made some computations suggesting that in the case $N=4$ the only
trapezoidal solution is the square. The only rhombic solution is also the
square. Increasing the value of $N$ it becomes possible to show that there are
also solutions with nested squares, triangles, etc. However, we will not
continue this discussion here. It would be interesting to determine the
smallest number $N$ yielding solutions without any symmetry group.
\end{remark}

\section{Bounded domains.}
\label{CNprob}
Solving the Keller-Segel model in the half circle, it is possible to obtain a
wealth of shapes yielding aggregation at the boundary. The mass is, in all the
cases $4\pi m$ with positive integers $m$. 
It is possible to obtain for instance $8\pi$ instead of $4\pi,$ 
just keeping one point at the interior of the domain. 
Notice that one must choose symmetric point configurations in
order to ensure that the homogeneous Neumann boundary conditions are satisfied.

\bigskip

\noindent
\textbf{Acknowledgements.} The authors thank C.\,Cuesta, M.\,Fontelos, 
and B.\,Gamboa for their help on the numerical computation for the constant $A$ in
Lemma \ref{Prop1}.

\end{document}